\numberwithin{equation}{section}
\newtheoremstyle{thmlemcorr}{10pt}{10pt}{\itshape}{}{\bfseries}{.}{10pt}{{\thmname{#1}\thmnumber{ #2}\thmnote{ (#3)}}}
\newtheoremstyle{thmlemcorr*}{10pt}{10pt}{\itshape}{}{\bfseries}{.}\newline{{\thmname{#1}\thmnumber{ #2}\thmnote{ (#3)}}}
\newtheoremstyle{remexample}{10pt}{10pt}{}{}{\bfseries}{.}{10pt}{{\thmname{#1}\thmnumber{ #2}\thmnote{ (#3)}}}
\newtheoremstyle{ass}{10pt}{10pt}{}{}{\bfseries}{.}{10pt}{{\thmname{#1}\thmnumber{ A#2}\thmnote{ (#3)}}}
\theoremstyle{thmlemcorr}
\newtheorem{theorem}{Theorem}
\numberwithin{theorem}{section}
\newtheorem{lemma}[theorem]{Lemma}
\theoremstyle{thmlemcorr*}
\newtheorem{theorem*}{Theorem}
\newtheorem{lemma*}[theorem]{Lemma}
\newtheorem{corollary*}[theorem]{Corollary}
\newtheorem{proposition*}[theorem]{Proposition}
\newtheorem{problem*}[theorem]{Problem}
\newtheorem{conjecture*}[theorem]{Conjecture}
\newtheorem{definition*}[theorem]{Definition}
\newtheorem{assumption*}[theorem]{Assumption}
\theoremstyle{remexample}
\newtheorem{remark}[theorem]{Remark}
\theoremstyle{ass}
\newcommand{\Dcal}{\mathcal{D}}
\newcommand{\T}{\mathbb{T}}
\newcommand{\norm}[1]{\|#1\|}
\newcommand{\abs}[1]{|#1|}
\newcommand{\R}{\mathbb{R}}
\newcommand{\C}{\mathbb{C}}
\newcommand{\Z}{\mathbb{Z}}
\newcommand{\eps}{\epsilon}
\providecommand{\de}[1]{\partial_{\theta}^{#1}}
\providecommand{\dfi}[1]{\partial_{\varphi}^{#1}}
\providecommand{\abs}[1]{\lvert#1\rvert}
\providecommand{\e}[1]{^{#1}}
\providecommand{\dhil}[1]{\dot{H}^{#1}(\T)}
\providecommand{\norm}[1]{\lVert#1\rVert}
\def\XXint#1#2#3{{\setbox0=\hbox{$#1{#2#3}{\int}$}
\vcenter{\hbox{$#2#3$}}\kern-.5\wd0}}
\renewcommand{\eps}{\varepsilon}
\renewcommand{\epsilon}{\varepsilon}
\renewcommand{\phi}{\varphi}
\begin{document}

%% TITLE MATTERS

\title[Non-Newtonian two-fluid Taylor--Couette flow]{Analysis of a two-fluid Taylor--Couette flow \\with one non-Newtonian fluid}

\author{Christina Lienstromberg, Tania Pernas-Casta\~{n}o, Juan J. L. Vel\'{a}zquez}
\address{Institute of Applied Mathematics, University of Bonn, Endenicher Allee~60, 53115 Bonn, Germany}
\email{lienstromberg@iam.uni-bonn.de}
\email{pernas@iam.uni-bonn.de (Corresponding author)}
\email{velazquez@iam.uni-bonn.de}

\begin{abstract}
We study the dynamic behaviour of two viscous fluid films confined between two concentric cylinders rotating at a small relative velocity. It is assumed that the fluids are immiscible and that the volume of the outer fluid film is large compared to the volume of the inner one. Moreover, while the outer fluid is considered to have constant viscosity, the rheological behaviour of the inner thin film is determined by a strain-dependent power-law.
Starting from a Navier--Stokes system, we formally derive evolution equations for the interface separating the two fluids. Two competing effects drive the dynamics of the interface, namely, the surface tension and the shear stresses induced by the rotation of the cylinders.  
When the two effects are comparable, the solutions behave, for large times, as in the Newtonian regime. 
We also study the regime in which the surface tension effects dominate the stresses induced by the rotation of the cylinders. In this case, we prove local existence of positive weak solutions both for shear-thinning and shear-thickening fluids. In the latter case, we show that interfaces which are initially close to a circle converge to a circle in finite time and keep that shape for later times.
\end{abstract}
\vspace{4pt}

% \noindent\textsc{MSC (2010): 76A05, 76A20, 35B40, 35Q35, 35K35, 35K65;}

% \noindent\textsc{Keywords: Taylor--Couette flow, non-Newtonian fluid, power-law fluid, degenerate parabolic equation, weak solution, long-time asymptotics, thin-film equation}

%\vspace{4pt}

%\noindent\textsc{Date:} \today{}.
%\end{abstract}

%% PDF MATTERS

%% START OF CONTENT

\maketitle
\bigskip

\noindent\textsc{MSC (2010): 76A05, 76A20, 35B40, 35Q35, 35K35, 35K65;}

\noindent\textsc{Keywords: Taylor--Couette flow, non-Newtonian fluid, power-law fluid, degenerate parabolic equation, weak solution, long-time asymptotics, thin-film equation}
\bigskip

%------------------------------------------------------
%------------------------------------------------------
%------------------------------------------------------
%------------------------------------------------------
\bigskip

\section{Introduction}

Taylor--Couette flows describe the dynamics of viscous fluids confined between two concentric cylinders.
By the end of the 19th century, M. Couette experimentally observed that the fluid flow is steady when the relative velocity of the rotating cylinders is small and the gap between the cylinders is small compared to their radii. This is the so-called Couette flow. In 1923, G. I. Taylor proved mathematically that the Couette flow becomes unstable as soon as the relative angular velocity of the cylinders exceeds a certain critical value  \cite{Taylor}. The more the relative angular velocity of the cylinders is increased, the more turbulent becomes the behaviour of the flow.

There is a rich literature dealing with the dynamics of one single Newtonian fluid between two concentric rotating cylinders, both in the mathematical and the physical literature, c.f. \cite{Baumert,Chandra,Chossat,Drazin,Schlichting,renardy}, to mention only a few contributions.
Much less has been done for the two-fluid Taylor--Couette flow. The dynamics of two immiscible Newtonian fluids in a Taylor--Couette geometry has been studied in \cite{renardy} with a combination of analytical and numerical methods. In  particular, the stability of the flows for different ranges of viscosities, densities and surface tensions of the fluids in the absence of gravity is considered. The particular setting in which one of the fluids is localised in a thin layer is considered in \cite{PV1} for the case in which both fluids are Newtonian.

In the present work, we formally derive a model for the dynamics of the interface separating two immiscible viscous fluids between two concentric cylinders, where one of the fluids occupies a rather thin layer and is characterised by a non-Newtonian rheology. We also study rigorously the well-posedness of the resulting model and the long-time asymptotics of its solutions. 

Two physical assumptions are crucial for the derivation of the model. First, as in \cite{PV1}, we assume that the dynamics of the two-fluid system is described by a small perturbation of the Taylor--Couette flow for one single fluid confined between two cylinders. Second, while the outer fluid is assumed to be Newtonian, the inner fluid is assumed to be a non-Newtonian fluid with a strain-dependent viscosity $\mu$.
We consider the setting in which the inner cylinder is at rest, while the outer cylinder rotates at a fixed angular velocity. 

Originally, the dynamics of both immiscible fluids are described by a Navier--Stokes system in which gravitational effects are neglected. For different regimes of the surface tension and
under the assumption that the Reynolds number is small enough to avoid the aforementioned Taylor-instabilities, we study the formal asymptotic limit of a vanishing thickness of the inner fluid film. 
To this end, we apply the so-called lubrication approximation which has been used extensively in the literature on fluid mechanics, c.f. for instance \cite{Ockendon}. We also refer the reader to the papers \cite{giacomelli_otto,Gunther} for rigorous mathematical results concerning the derivation of the classical Newtonian thin-film equation, taking as a starting point the Navier--Stokes equations. 

The evolution equation that we derive and study in this paper has the general form
\begin{equation} \label{eq:intro1_new}
    \partial_t h + \partial_\theta 
	\left(h^2 \int_0^1 z \psi\left(\beta + 
	z\, h \bigl(\partial_\theta h + \partial^3_\theta h\bigr)\right)\, dz\right) = 0, \quad t > 0,\ \theta \in S^1=[0,2\pi],
\end{equation}
for the thickness $h=h(t,\theta)$ of the thin strain-dependent fluid film which separates the thicker Newtonian fluid from the internal cylinder.
We assume that the non-Newtonian fluid has a general strain-dependent viscosity $\mu=\mu\bigl(\tau_{\text{char}}\left\|\,\textbf{D} \textbf{u}\right\|\bigr)$, where $\textbf{u}$ is the velocity field of the inner non-Newtonian fluid, $\textbf{D} \textbf{u}=\frac{1}{2}\bigl(\nabla \textbf{u}+\nabla (\textbf{u})\e{T}\bigr)$ is the corresponding symmetric gradient and $\left\|\,\textbf{D} \textbf{u}\right\| = \sqrt{\text{tr}(|\textbf{D} \textbf{u}|^2)}$. Moreover, $\tau_{\text{char}}$ is the characteristic time of the non-Newtonian fluid, i.e. it can be thought of as a characteristic value of the strains for which the nonlinear effects in the viscosity become relevant.
The key assumption in the derivation of \eqref{eq:intro1_new} is that the function $s\mapsto\mu(|s|)s,\ s \in \R$, is strictly increasing. Then, the function $\psi$ in \eqref{eq:intro1_new} is defined by means of $\psi\bigl(\mu(|s|)s\bigr) = s$ for $s\in\R$. 
It can be seen in the derivation of \eqref{eq:intro1_new} that the evolution of the interface separating the two fluids is driven by the combined action of surface tension, of the shear stress induced by the rotation of the outer cylinder, and of the characteristic stress of the non-Newtonian rheology. We are interested in the interaction of these forces and on their influence on the structure of the evolutionary equation for the thickness $h$ of the non-Newtonian fluid. Different scaling limits for these three effects are encoded in the function $\psi$ and the parameter $\beta$, respectively. 

We first comment on the choice of the function $\psi$.
If the effect of surface tension is comparable with the characteristic stress of the non-Newtonian fluid, we can derive and study \eqref{eq:intro1_new} for general smooth functions $\psi$. If either surface tension dominates the characteristic stress of the non-Newtonian fluid or vice versa, we chose the function $\psi$ such that
\begin{equation}\label{eq:intropsi}
  \psi(s) = |s|^\frac{1-p}{p} s, \quad s \in \R,\ p>0,
\end{equation} 
in order to allow for an appropriate time scaling.

We mention that the definition of $\psi$ in \eqref{eq:intropsi} does also correspond to the case in which the function $\mu$ characterising the strain-dependent viscosity of the non-Newtonian fluid is given by 
\begin{equation}\label{eq:intromu}
    \mu(\left|s\right|) = \left|s\right|^{p-1}, \quad s \in \R.
\end{equation}
Fluids whose rheology is defined by \eqref{eq:intromu} are called Ostwald--de Waele fluids. The parameter $p$ denotes the flow behaviour exponent. These fluids are Newtonian if $p=1$. For $p > 1$ the corresponding fluids are called shear-thickening as their viscosity increases with increasing shear rate. Conversely, if $p<1$, the viscosity decreases with increasing shear rate and the fluids are called shear-thinning fluids.

The parameter $\beta$ in \eqref{eq:intro1_new} measures the ratio of surface tension and shear forces induced by the rotation of the cylinders and plays a crucial role in our analysis. 

In the regime in which the surface tension, the shear stress induced by the rotation of the outer cylinder, and the characteristic stress of the non-Newtonian fluid are of the same order, we have that $\beta > 0$ is a positive constant that depends on the radii of the two cylinders, their relative velocity and the characteristic viscosities of the two fluids.  In these cases the derivation of \eqref{eq:intro1_new} is valid for general smooth functions $\mu$ and $\psi$, respectively, under the sole assumption that the map $s\mapsto\mu(|s|)s,\ s \in \R$, is strictly increasing.
Using center manifold theory, as developed for instance in \cite{Mielke,centermanif}, we prove in this paper that solutions to \eqref{eq:intro1_new} behave, for long times, after a suitable rescaling of the variables, in a manner analogous to the solutions 
in the Newtonian case  ($\mu\equiv 1$ and $\psi(s)\equiv s$ or $p=1$) in which \eqref{eq:intro1_new} reduces to the equation
\begin{equation}\label{eq:intro_Newtonian}
    \partial_t h + \partial_\theta\left(\frac{h^2}{2}\right) +\de{}\bigl(h\e{3}(\de{}h+\de{3}h)\bigr)=0, \quad t > 0,\ \theta \in S^1.
\end{equation}
Equation \eqref{eq:intro_Newtonian} is studied in \cite{PV1}, where the authors observe the same asymptotic behaviour for initial interfaces close to a circle. In particular, it is shown that in the Newtonian case the solution is globally defined and the interface approaches quickly a circle which is initially not concentric with the rotating cylinders. For larger times, the center of this circle spirals towards the common center of the cylinders as time tends to infinity. The equation \eqref{eq:intro_Newtonian} has also been obtained in \cite{Kerchman} describing the motion of a single thin fluid layer evolving on the exterior of a solid cylinder.

By a straightforward adaptation of the methods used in \cite{PV1}, we prove in this paper that solutions to \eqref{eq:intro1_new} feature the same asymptotic behaviour as solutions to \eqref{eq:intro_Newtonian}. More precisely, we prove that if the interface is initially close to a circle in $H^1(S^1)$, it is globally defined and converges in $H^1(S^1)$ to a circle at rate $1/t$ which is not concentric with the rotating cylinders. However, the center of this circle spirals at rate $1/\sqrt{t}$ to the common center of the cylinders as time $t$ tends to infinity.

%\textcolor{cyan}{More precisely, we prove that if the interface is initially close to a circle, it is globally defined and converges to a circle which is not concentric with the rotating cylinders. However, the center of this circle spirals to the common center of the cylinders as time tends to infinity.
%Equation \eqref{eq:intro_Newtonian} has also been studied in \cite{PV1} where the same asymptotic behaviour if the interface is initially close to a circle has been considered. In particular, it was shown  that in that case the solution is globally defined and the interface approaches quickly a circle which initially is not concentric with the rotating cylinders. For longer times, the center of this circle spirals towards the common center of the cylinders as time tends to infinity. The equation \eqref{eq:intro_Newtonian} has been also obtained in \cite{Kerchman} describing the motion of a single thin layer of fluid evolving on the exterior of a solid cylinder.}

If either surface tension effects dominate the effects of the characteristic stresses of the non-Newtonian rheology or vice versa, we have that $\psi$ is given by \eqref{eq:intropsi}. For both settings we study the case
when the effects of surface tension dominate the shear effects due to the rotation of the cylinders. This corresponds to the asymptotic limit $\beta \to 0$, and we obtain the evolution equation
% \begin{equation*}
%     \partial_t h + \partial_\theta \left(h(\theta)^\frac{2p-1}{p}\left|\partial_\theta h(\theta) + \partial_\theta^3 h(\theta)\right|^\frac{1-p}{p} \bigl(\partial_\theta h(\theta) + \partial_\theta^3 h(\theta)\bigr)\right) = 0, \quad t > 0,\ \theta\in S^1.
% \end{equation*}

% In the asymptotic limit $\beta=0$, the evolution equation \eqref{eq:intro1_new} with $\psi$ as in \eqref{eq:intropsi} can be written as
\begin{equation} \label{eq:intro_power_law_beta=0}
    \partial_t h + \partial_\theta 
	\left(h(\theta)^{\frac{1+2p}{p}}  \left|\partial_\theta h(\theta) + \partial^3_\theta h(\theta)\right|^\frac{1-p}{p} \bigl(\partial_\theta h(\theta) + \partial^3_\theta h(\theta)\bigr)\right) = 0, \quad t > 0,\ \theta \in S^1.
\end{equation}
In this regime, the effect of the shear forces induced by the rotation of the cylinders is negligible and the whole dynamics of the interface is driven by the combination of surface tension and the non-Newtonian rheology of the thin fluid film.

We prove local existence of positive weak solutions to \eqref{eq:intro_power_law_beta=0} for general positive initial data for both shear-thinning and shear-thickening fluids. 
Moreover, in the shear-thickening regime ($p>1$), we show that if the initial interface is close to a circle, there exists a global weak solution of \eqref{eq:intro_power_law_beta=0} with the property that it converges to a circle in finite time $t^\ast < \infty$. The proof is based on the derivation of a differential inequality for a certain energy functional which implies that the energy drops down to zero as $t\to t^\ast$. The main 
obstruction in the proof of the global existence result is the possibility of the interface touching the interior cylinder,
i.e. to have $\min h\left(t,\cdot\right) =0$ at some positive time $t$. In this paper, we consider only solutions of positive thickness $h$ since we are interested in the analysis of the stability of circular interfaces. In the shear-thinning case $p<1$, we expect the solutions to \eqref{eq:intro_power_law_beta=0} to approach asymptotically a circle as $t\to \infty$ with a correction given by the power-law $\mathcal{O}(t\e{-\frac{p}{1-p}})$. However, since the techniques required to obtain this result differ much from the ones used in this paper, we do not consider this case here. 

Thin-film equations the solutions of which allows for film rupture have been extensively studied in the literature, c.f.  \cite{BF:1990,BBD:1995,Kn:2015, Gi:1999}, to mention only a selection. Global well-posedness for non-negative initial data has first been proved in the seminal paper \cite{BF:1990}. This topic has additionally been pursued in \cite{BBDK:1994}, where the authors also study numerically the existence of singularities in finite and infinite time. The existence of global in time weak solutions to \eqref{eq:intro_Newtonian}  which allow for film rupture has been studied in \cite{Taranets} for a cylindrical geometry.

% For this particular choice of the constitutive rheological law, we analyse the evolution equation \eqref{eq:intro1_new} in two different scaling limits of the parameter $\beta$, given by
% \begin{equation*}
%     \beta = \tau_{\text{char}}^{1-p}\omega^{1-p}\frac{\mu_+}{\mu_0} \frac{2R_+\e{2}}{R_+\e{2}-R_-\e{2}}. 
% \end{equation*}
% Here, $\omega$ is the relative angular velocity of two cylinders.
% More precisely, we study equation \eqref{eq:intro1_new} with $\psi$ as in \eqref{eq:intropsi} for $\beta > 0$ of order one and for $\beta = 0$. 
% The choice of the flow behaviour exponent $p>1$ (shear-thickening fluid) or $p<1$ (shear-thinning fluid) has an interesting consequence on the behaviour of the parameter $\beta$ in \eqref{eq:intro1_new}. 
% In the case of a shear-thinning fluid ($p<1$), we obtain that $\beta\rightarrow 0$ if the
% relative rotation of the cylinders is very slow compared to the characteristic time of the fluid. On the contrary, in the
% shear-thickening case ($p>1$) we have $\beta\rightarrow 0$ if the relative
% rotation of the cylinders is very fast compared to characteristic time scale of the fluid.
% In other words, in the case in which the thin non-Newtonian layer is governed by a power-law fluid and in the rescaling regime considered in this paper, the interface separating the two fluids is basically driven by surface tension and by the non-Newtonian rheology.

We remark that the surface tension forces tend to drive the interface towards a circular shape. On the contrary, the shear induced by the rotation of the
cylinders has the tendency to generate 'fingering' and to form interfaces which differ much from a circular interface. In this paper, we consider only situations in which, for large times, the contribution due to the surface tension dominates the contribution due to the shear induced by the rotation of the cylinders. Therefore, for large times the
interfaces behave asymptotically as a circle.

Note that when the shape of the interface becomes close to a circle, then the surface tension forces, reflected in the term $(\partial_\theta h + \partial_\theta^3 h)$, do no longer yield a considerable effect on the dynamics. Therefore, for sufficiently long times, the parameter $\beta$ reflecting the shear stress induced by the rotating cylinders gives the main contribution to the deformation of the interface. Consequently, for large times the solution behaves always as in the Newtonian case, with a viscosity coefficient depending on $\beta$. This indicates in particular that, when the shape of the interface becomes close to a circle, then the model with $\beta=0$ is not a good approximation anymore. Moreover, even if the interface is not close to a circle, the term $(\partial_\theta h + \partial_\theta^3 h)$ induced by the surface tension vanishes at some points. Near those points, the effect of the term $\beta$, reflecting the shear forces, becomes the dominant one. Consequently, this might lead to small localised effects in the solution and to the creation of boundary layer regions in which the shear stress is dominant. However, these questions are addressed in future works.

The evolution equations \eqref{eq:intro1_new}, respectively \eqref{eq:intro_power_law_beta=0}, belong to a class of non-Newtonian thin-film equations with strain-dependent viscosity. Similar equations have been studied in different settings for instance in \cite{AG:2002,AG:2004,K:2001a,King:2001b,LM:2020}.
In \cite{AG:2004}, the authors consider a single thin film occupied by a power-law fluid. The governing equation is 
\begin{equation} \label{eq:AG:2004}
    \partial_t h + \partial_x
	\left(h^\frac{2p+1}{p}  \left|\partial^3_x h\right|^\frac{1-p}{p}\right) = 0, \quad t > 0,\ x \in \Omega \subset \R.
\end{equation}
Note that this equation is very similar to the equation \eqref{eq:intro_power_law_beta=0} with $\beta=0$, except that in \eqref{eq:AG:2004} the nonlinearity depends only on the third-order derivative, instead of $(\de{}h+\de{3}h)$. The authors use a two-step regularisation scheme to prove global existence of non-negative weak solutions to \eqref{eq:AG:2004} for general non-negative initial data.
The papers \cite{AG:2002} and \cite{LM:2020} deal with an Ellis thin-film equation instead of a power-law thin-film equation. This is a constitutive rheological law for shear-thinning fluids which combines a power-law behaviour with a Newtonian plateau, cf. \cite{MBB:1965,SW:1994}. In \cite{AG:2002}, the authors analyse a class of quasi-self-similar solutions describing the spreading of a droplet, whose thickness $h$ is determined by the equation
\begin{equation}\label{eqself}
    \partial_t h + \partial_x\left(h^3 \left(1 + \bigl|h \partial_x^3h\bigr|^\frac{1-p}{p}\right) \partial_x^3 h\right) = 0, \quad t > 0,\, x \in \Omega \subset \R.
\end{equation}
For these solutions the presence of the non-Newtonian rheology plays a fundamental role removing the well-known no-slip paradox which arises for Newtonian fluids in the presence of contact lines. 
However, the non-Newtonian terms become negligible except in a small region close to the contact lines. 
% In the solutions described in \cite{AG:2002}, 
% the non-Newtonian terms become negligible except in a small region close to the contact lines. 
In \cite{LM:2020}, the authors prove local existence of strong solutions to \eqref{eqself} in the case $p \in (1/2,1)$, in which the coefficients of the highest-order terms depend only H\"older continuously on the solution. 

For two-phase thin-film equations we refer the reader to the works \cite{Belinchon,Escher,Laurencot} dealing with the Newtonian case. Moreover, a wide variety of two-fluid viscous flows in many different geometrical settings is described in \cite{renardybook}.

The introduction is closed by a brief outline of our work. In Section \ref{sec:modelling}, we use lubrication approximation to formally derive the evolution equations \eqref{eq:intro1_new} and \eqref{eq:intro_power_law_beta=0}, respectively, for the interface separating the strain-dependent thin fluid film from the Newtonian fluid film. At the end of the section, we discuss the different asymptotic limits, reflected in the choice of the function $\psi$ and the parameter $\beta$. The resulting evolution equations are analysed in Sections \ref{sec:different_time_scales} and \ref{sec:comparable_timescales}. More precisely, the asymptotic limit $\beta=0$ is treated in Section \ref{sec:different_time_scales}. In Section \ref{sec:local_existence} we prove local existence of positive weak solutions in the shear-thinning as well as in the shear-thickening regime. In Section \ref{sec:global_existence} we prove, for initial interfaces close to a circle, the existence of a global weak solution of \eqref{eq:intro_power_law_beta=0} with the property that it converges to a circle in finite time. Finally, in Section \ref{sec:comparable_timescales} we study the equation for $\beta > 0$ of order one. We use center manifold theory, to prove the aforementioned convergence to a circular interface for long times.

\section{Physical model and derivation of the equations}\label{sec:modelling}

In this section we describe the physical setting of our problem and derive the evolution equations \eqref{eq:intro1_new} and \eqref{eq:intro_power_law_beta=0} for the interface separating the two fluids. These evolution equations, which contain a parameter $\beta$ that can take the value $\beta>0$ and $\beta=0$ in different asymptotic limits, are analysed rigorously in the subsequent sections. 

%------------------------------------------------------
%------------------------------------------------------
%------------------------------------------------------
\bigskip

\subsection{Navier--Stokes system for one Newtonian and one non-Newtonian fluid}

We consider two immiscible fluid films confined between two concentric cylinders rotating at different angular velocities. More precisely, we denote by $R_-, R_+ > 0$ the radius of the internal and external cylinder, respectively, both cylinders being centered at the origin. The hydrodynamic behaviour of the two fluids can be described by the Navier--Stokes equations
\begin{equation}\label{eq:Navier-Stokes}
	\begin{cases}
		\rho_- \left(\tilde{\textbf{u}}^-_{\tilde{t}} + (\tilde{\textbf{u}}^- \cdot \nabla) \tilde{\textbf{u}}^-\right)
		=
		- \nabla \tilde{p}^-
		+ 2\nabla\cdot \bigl(\tilde{\mu}_-\bigl(\left\| \tilde{\textbf{D}}^- \tilde{\textbf{u}}^-\right\|\bigr) \tilde{\textbf{D}}^-\tilde{\textbf{u}}^-\bigr)
		&
		\text{in } \tilde{\Omega}_-(\tilde{t})
		\\
		\rho_+ \left(\tilde{\textbf{u}}^+_{\tilde{t}} + (\tilde{\textbf{u}}^+ \cdot \nabla) \tilde{\textbf{u}}^+\right)
		=
		- \nabla \tilde{p}^+ + \mu_+ \Delta \tilde{\textbf{u}}^+
		&
		\text{in } \tilde{\Omega}_+(\tilde{t})
		\\
		\nabla\cdot \tilde{\textbf{u}}^{\pm} 
		=
		0
		&
		\text{in } \tilde{\Omega}_\pm(\tilde{t}),
	\end{cases}
\end{equation}
where $\tilde{\textbf{u}}^\pm(\tilde{t},\tilde{x}) = (\tilde{u}^\pm(\tilde{t},\tilde{x}),\tilde{v}^\pm(\tilde{t},\tilde{x}))$ denotes the velocity field at time $\tilde{t} > 0$ and position $\tilde{x} \in \R^2$, $\tilde{p}^\pm$ is the pressure and $\rho_{\pm} \geq 0$ is the density of the inner (-), respectively outer (+) fluid.
The fluid next to the internal cylinder is assumed to be non-Newtonian with a shear-dependent viscosity $\tilde{\mu}_-\bigl(\left\|\tilde{\textbf{D}}^- \tilde{\textbf{u}}^-\right\|\bigr) > 0$. Here $\tilde{\textbf{D}}^- \tilde{\textbf{u}}^-=\frac{1}{2}\bigl(\nabla \tilde{\textbf{u}}^- + (\nabla \tilde{\textbf{u}}^-)^T\bigr)$ denotes the symmetric gradient of the velocity field $\tilde{\textbf{u}}^-$ of the inner fluid and $\left\|\tilde{\textbf{D}}^- \tilde{\textbf{u}}^-\right\| = \sqrt{\text{tr}(|\tilde{\textbf{D}}^- \tilde{\textbf{u}}^-|^2)}$. We assume that the shear stress is monotonically increasing in the shear rate, i.e. the function $s\mapsto \mu(s)s$ is monotonically increasing on $\R$. The fluid next to the external cylinder is assumed to be Newtonian with constant viscosity $\mu_+ > 0$. In order to describe the spatial position between the two cylinders we use polar coordinates $\tilde{\textbf{x}} = (\tilde{x},\tilde{z}) = (r \cos \theta, r \sin \theta) \in \R^2$. Thus, denoting by $d > 0$ the average height of the inner fluid film and by $h(\tilde{t},\theta) > 0$ the function defining the interface, the regions filled by the respective fluid may be described by
\begin{equation*}
    \begin{cases}
        \tilde{\Omega}_-(\tilde{t}) = \left\{\tilde{\textbf{x}} \in \R^2; R_- < r < R_- + d h(\tilde{t},\theta)\right\} & \\
        \tilde{\Omega}_+(\tilde{t}) = \left\{\tilde{\textbf{x}} \in \R^2; R_- + d h(\tilde{t},\theta) < r < R_+\right\}. &
    \end{cases}
\end{equation*}
%------------------------------------------------------
Note that we assume the function $h(\tilde{t},\theta)$ being strictly positive, i.e. the interface of the two fluid films cannot touch the inner cylinder. A sketch of the problem setting may be found in Figure \ref{figdom}.
\begin{center}
   \begin{figure}[h]
    \center
    \includegraphics[width=90mm]{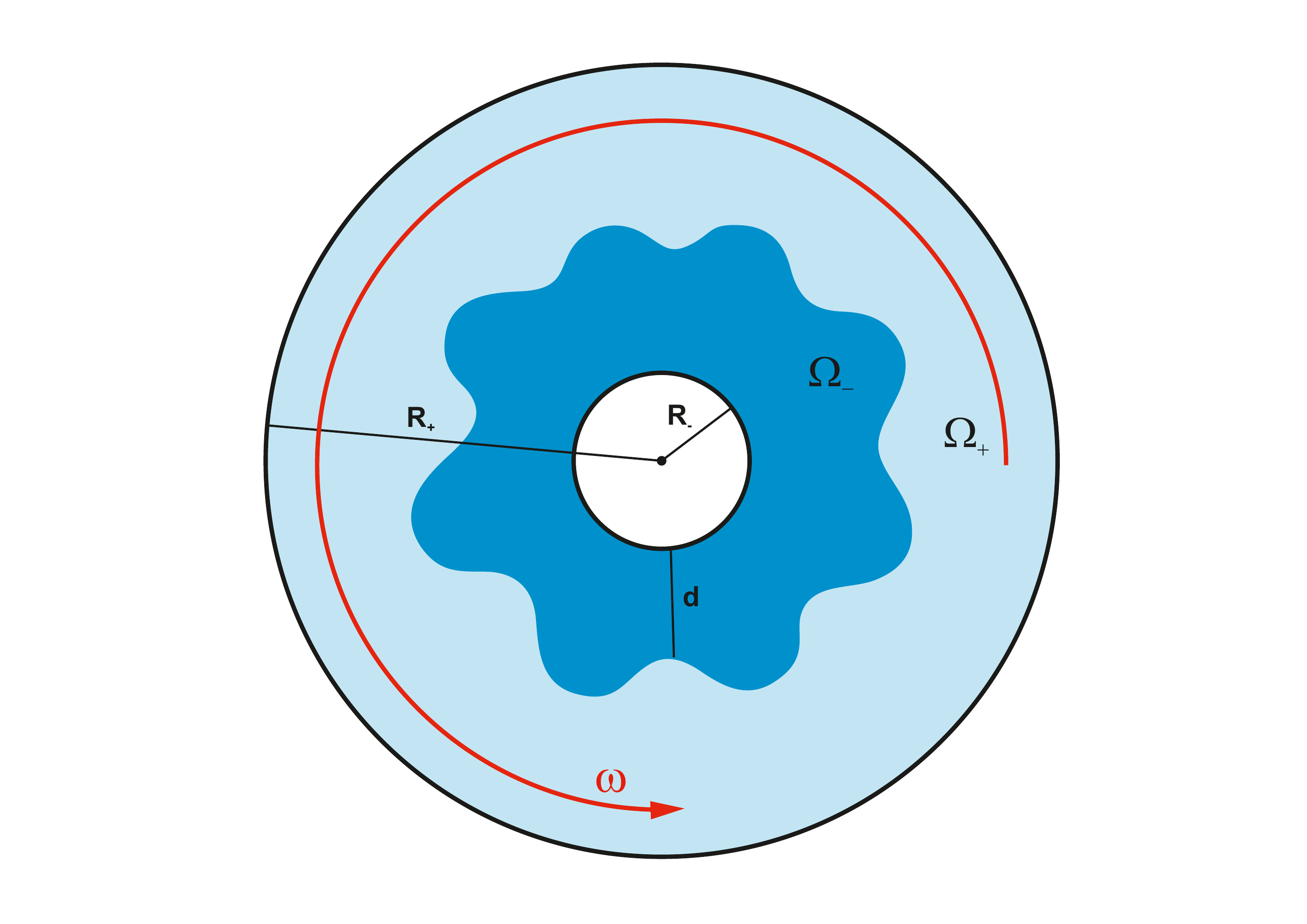}
    \caption{Taylor-Couette flow for two fluids}
    \label{figdom}
    \end{figure}
\end{center}
The Navier--Stokes system \eqref{eq:Navier-Stokes} is complemented by the following boundary conditions. We suppose that the internal cylinder is at rest, while the external cylinder rotates counterclockwise at angular velocity $\omega > 0$. Moreover, we assume that the fluid velocities $\tilde{\textbf{u}}^\pm$ of the two fluids coincide at the respective cylinders with the angular velocities at which the respective cylinder rotates. That is, we have the boundary conditions
\begin{equation*}
    \begin{cases}
    \tilde{\textbf{u}}^-
	=
	0,
    &
	\tilde{\textbf{x}} \in \partial B_{R_-}(0)
	\\
	\tilde{\textbf{u}}^+
	=
	\omega (-\tilde{x}_2,\tilde{x}_1), 
	&
	\tilde{\textbf{x}} \in \partial B_{R_+}(0).
	\end{cases}
\end{equation*}
Moreover, we assume that at the interface $\partial \tilde{\Omega}$ the normal velocities of the fluids coincide with the normal velocity $V_n$ of the interface and the tangential velocities of the fluids coincide, i.e.
\begin{equation*}
    \begin{cases}
        \tilde{\textbf{u}}^-\cdot \tilde{\textbf{n}}
    	=
	    \tilde{\textbf{u}}^+\cdot \tilde{\textbf{n}}
	    =
	    V_n, &
	    \tilde{\textbf{x}} \in \partial\tilde{\Omega}
	    \\
	    \tilde{\textbf{u}}^-\cdot \tilde{\textbf{t}}
	    =
	    \tilde{\textbf{u}}^+\cdot \tilde{\textbf{t}},
	    &
	    \tilde{\textbf{x}} \in \partial\tilde{\Omega}.
    \end{cases}
\end{equation*}
Finally, denoting by $\tilde{\Sigma}^\pm(\tilde{\textbf{u}},\tilde{p})$ the stress tensor of the respective fluid, we require the tangential stress balance condition and the normal stress balance condition to be satisfied at the interface $\partial \tilde{\Omega}$. These conditions read
\begin{equation*}
    \begin{cases}
        \tilde{\textbf{t}} \left(\tilde{\Sigma}^2 - \tilde{\Sigma}^1\right) \cdot \tilde{\textbf{n}} = 0,
	    &
	    \tilde{\textbf{x}} \in \partial\tilde{\Omega}
	    \\
	    \tilde{\textbf{n}} \left(\tilde{\Sigma}^2 - \tilde{\Sigma}^1\right) \cdot \tilde{\textbf{n}} = \tilde{\gamma}\tilde{\kappa},
	    & 
	    \tilde{\textbf{x}} \in \partial\tilde{\Omega}.
    \end{cases}
\end{equation*}
Here, we use the notation $\tilde{\textbf{n}}$ and $\tilde{\textbf{t}}$ for the normal vector pointing from the region $\tilde{\Omega}_-$ occupied by the inner fluid to the region $\tilde{\Omega}_+$ occupied by the outer fluid and the tangential vector at the interface, respectively. Furthermore, $\tilde{\kappa}$ denotes the mean curvature of the interface and $\tilde{\gamma}$ is the constant surface tension.

% \textsc{Model and Notation:}
% \begin{itemize}
%     \item The volume of the inner fluid film is assumed to be rather small compared to the volume of the outer fluid film. 
% 	\item $A_0 = \pi \bigl((R_1 + d)^2 - R_1^2\bigr)$
% 	\item $\tilde{\gamma}$ surface tension
% 	\item $\tilde{\kappa}$ curvature of the interface
% %	\item Parametrisation of interface in polar coordinates
% %		\begin{equation*}
% %			r = R_1 (1 + \eps h(t,\vartheta))
% %			\quad
% %			\text{or}
% %			\quad
% %			r = R_2 (1 - \eps \frac{R_1}{R_2} h(t,\vartheta))
% %		\end{equation*}
% \end{itemize}

%------------------------------------------------------
\bigskip
\noindent\textsc{The dimensionless Navier--Stokes system. } 
In this paper we assume that the rheology of the non-Newtonian fluid is given by a viscous coefficient $\mu_-\bigl(\tau\left\|\textbf{D}^- \textbf{u}^-\right\|\bigr) = \mu_0 \tilde{\mu}_-\bigl(\tau_{\text{char}}\left\|\tilde{\textbf{D}}^- \tilde{\textbf{u}}^-\right\|\bigr)$. The function $s \mapsto \mu_-(s)$ is a function that can describe very complicated nonlinear behaviours. The parameter $\mu_0$ is the characteristic viscosity of the fluid when $\tau_{\text{char}}\left\|\tilde{\textbf{D}}^- \tilde{\textbf{u}}^-\right\|$ is of order one.
Moreover, the parameter $\tau_{\text{char}}$ is the characteristic time of the non-Newtonian fluid that must have unit of time for dimensional reasons. Under this assumption on $\tilde{\mu}_-$ we have that the viscous stresses are given by $ \mu_0\tilde{\mu}_-\bigl(\tau_{\text{char}}\left\|\tilde{\textbf{D}}^- \tilde{\textbf{u}}^-\right\|\bigr) =\frac{\mu_0}{\tau_\text{char}}\Lambda(\tau_\text{char}\tilde{\textbf{D}}\e{-}\tilde{\textbf{u}}\e{-})$, where $\Lambda(\textbf{A})=\mu_-(\norm{\textbf{A}})\textbf{A}$ with $\textbf{A}\in M_{3\times 3}(\R)$. Note that the order of magnitude of these viscous stresses is $\frac{\mu_0}{\tau_{\text{char}}}$ if $\tau_{\text{char}}\left\|\tilde{\textbf{D}}^- \tilde{\textbf{u}}^-\right\|$ is of order one.

We now rescale the original variables in order to obtain the above Navier--Stokes system in dimensionless form. To this end, we set 
\begin{equation}\label{eq:scaling}
\begin{split}
	\textbf{x} &= \frac{\tilde{\textbf{x}}}{R_-}, 
	\quad
	t = \omega \tilde{t},
	\quad 
	\textbf{u}^\pm = \frac{\tilde{\textbf{u}}^\pm}{\omega R_-},
	\quad 
	p^\pm = \frac{\tilde{p}^\pm}{\rho_+ \omega^2 R_-^2},
	\quad 
	\gamma = 
	\frac{\tilde{\gamma}}{\rho_+ R_-^3 \omega^2},
	\quad
	\eps = \frac{d}{R_-},
	\quad
	\eta = \frac{R_+}{R_-}
	\\
	\rho &= \frac{\rho_-}{\rho_+},
	\quad
	\text{Re} = \frac{\rho_+ \omega R_-^2}{\mu_+},
	\quad
	\mu_-\bigl(\tau \left\|\textbf{D}^-\textbf{u}^-\right\|\bigr) = \frac{1}{\mu_0} \tilde{\mu}_-\bigl(\tau_{\text{char}} \left\|\tilde{\textbf{D}}^-\tilde{\textbf{u}}^-\right\|\bigr),
	\quad
	\tau=\tau_{\text{char}}\omega,
	\quad
	\mu = \frac{\mu_0}{\mu_+}.
\end{split}
\end{equation}
Consequently, $\eps > 0$ is the dimensionless thickness of the inner fluid film. By $\text{Re} \geq 0$ we denote the Reynolds number which defines the ratio of inertial to viscous forces. 

The external characteristic time $\tau$ is induced by the relative angular velocity of the rotating cylinders. Observe that the system is now scaled such that the internal cylinder has radius $1$, while the external cylinder has radius $\eta$ and rotates with angular velocity $1$, c.f. Figure \ref{figdomthin}. Note that the non-dimensional quantities $\rho, \text{Re}, \tau, \eta$ and $\mu$ do not have indices.
\begin{center}
  \begin{figure}[h]
    \center
    \includegraphics[width=90mm]{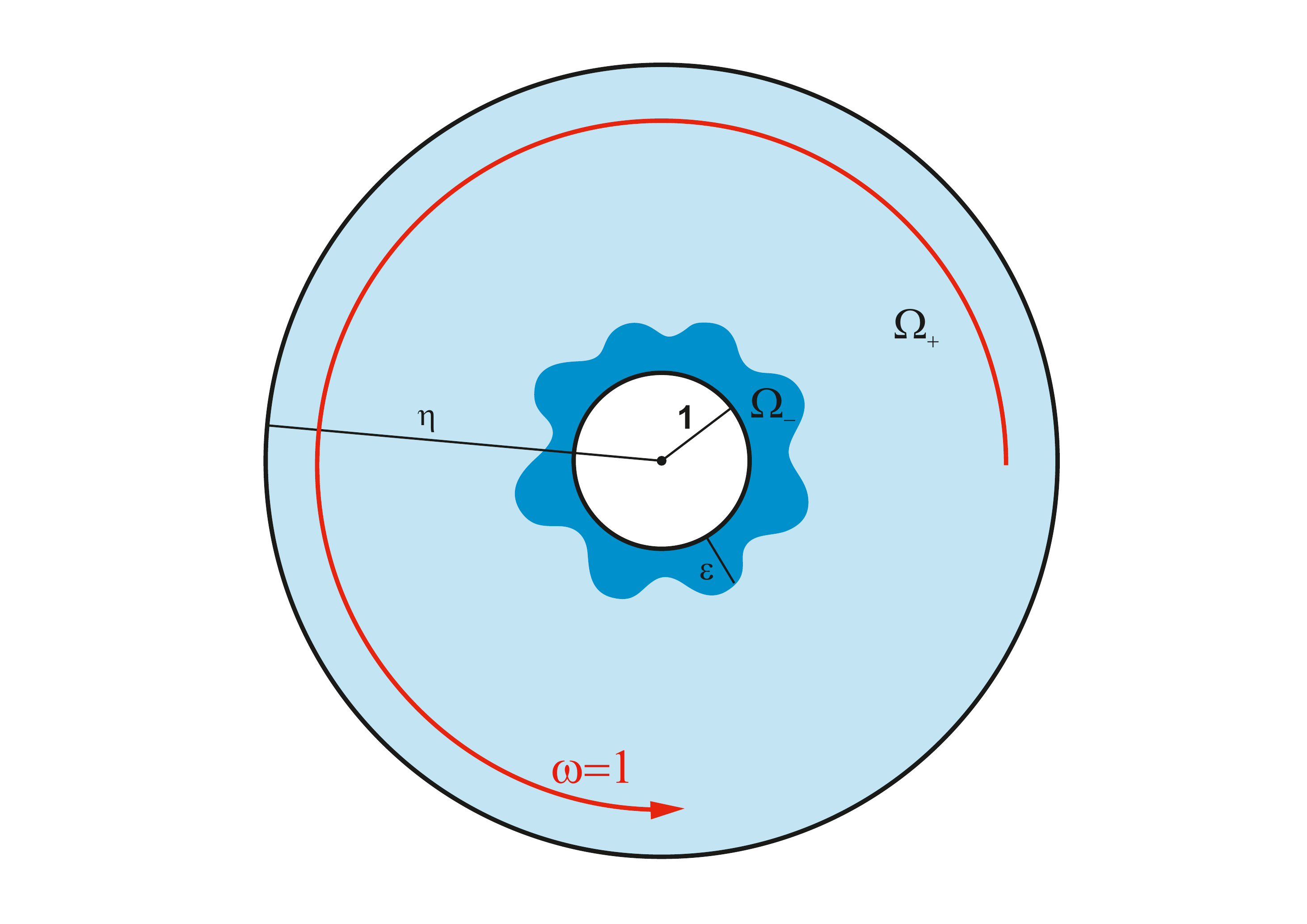}\caption{Taylor-Couette flow with a thin-film layer near the internal cylinder (in non-dimensional units)}
    \label{figdomthin}
\end{figure}
\end{center}
With this change of variables, the original Navier--Stokes system \eqref{eq:Navier-Stokes} becomes
\begin{equation*}
	\begin{cases}
		\rho \left(\textbf{u}^-_t + (\textbf{u}^-\cdot \nabla) \textbf{u}^-\right)
		=
		- \nabla p^- 
		+\frac{2\mu}{\text{Re}} \nabla\cdot \bigl( \mu_-\bigl(\tau \left\|\textbf{D}^-\textbf{u}^-\right\|\bigr) \textbf{D}^-\textbf{u}^-\bigr)
		&
		\text{in } \Omega_-(t)
		\\
		\bigl(\textbf{u}^+_t + (\textbf{u}^+\cdot \nabla) \textbf{u}^+\bigr)
		=
		- \nabla p^+ 
		+\frac{1}{\text{Re}} \Delta \textbf{u}^+
		&
		\text{in } \Omega_+(t)
		\\
		\nabla\cdot \textbf{u}^\pm
		=
		0
		&
		\text{in } \Omega_\pm(t),
	\end{cases}
\end{equation*}
where the regions $\Omega_-(t)$ and $\Omega_+(t)$, filled by the inner, respectively the outer fluid, are now given by
\begin{equation*}
    \begin{cases}
        \Omega_-(t) = \left\{\textbf{x} \in \R^2;\ 1 < r < 1 + \eps h(t,\theta)\right\} &
	    \\
	    \Omega_+(t) = \left\{\textbf{x} \in \R^2;\ 1 + \eps h(t,\theta) < r < \eta\right\}. &
    \end{cases}
\end{equation*}
The dimensionless boundary conditions read
\begin{equation*}
    \begin{cases}
	    \textbf{u}^-
	    =
	    0,
	    &
	    \textbf{x} \in \partial B_1(0)
	    \\
    	\textbf{u}^+
    	=
    	(-x_2,x_1),
	    &
	    \textbf{x} \in \partial B_{\eta}(0)
	    \\
	    
	    \textbf{u}^-\cdot \textbf{t}
	    =
	    \textbf{u}^+\cdot \textbf{t},
	    &
	    \textbf{x} \in \partial\Omega\\\textbf{u}^-\cdot \textbf{n}
	    =
	    \textbf{u}^+\cdot \textbf{n}
	    =
	    V_n,
	    &
	    \textbf{x} \in \partial\Omega
	    
	    \\
	    \textbf{t} \left(\Sigma^+ - \Sigma^-\right) \cdot \textbf{n} = 0,
	    &
	    \textbf{x} \in \partial\Omega
	    \\
	    \textbf{n} \left(\Sigma^+ - \Sigma^-\right) \cdot \textbf{n} = \gamma \kappa,
	    &
	    \textbf{x} \in \partial\Omega.
	\end{cases}
\end{equation*}

%------------------------------------------------------
%------------------------------------------------------
%------------------------------------------------------

\subsection{Taylor--Couette flow and thin-film approximation}
As already mentioned in the introduction we are interested in the case in which the volume of the liquid film $\Omega_-(t)$ next to the internal cylinder is rather small compared to the volume of the film $\Omega_+(t)$ next to the external cylinder. Mathematically this corresponds to the asymptotic limit $\eps = \frac{d}{R_-} \to 0$.\footnote{In \cite{PV1},  the case in which the layer of fluid closer to the external cylinder  is much thinner than the inner one, has also been considered in the Newtonian case.  However, since the analysis is similar we restrict ourselves in  this paper to the case in which the thin layer is close to the internal cylinder.} 
% Under this assumption we require the surface tension to scale with the non-dimensional thickness $\varepsilon$ as 
% \begin{equation}\label{tens}
%     \gamma\approx\frac{b}{\varepsilon\e{2}}\quad\textit{as}\quad\varepsilon\to 0\quad\textit{and}\quad \textcolor{magenta}{0 < b \in \mathcal{O}(1)}.
% \end{equation} 
% \textcolor{magenta}{It is worthwhile to mention that this specific rescaling strongly effects the character of the resulting evolution equation.}
Taking this limit and using formal matched asymptotic expansions, we are able to derive explicit expressions for the pressure as well as for the velocity field of each of the fluids. Consequently, we are left with a single equation for the  for the interface $h$ separating the two fluids.

Since the numerical results in \cite{renardy}, as well as the analytic results in \cite{PV1} show that the laminar flow solution, i.e. the concentric circle centered at the origin, is stable for small Reynolds number, we require $\text{Re} = \frac{\rho_+ \omega R_-^2}{\mu_+}$ to be of order one, but small enough to avoid the appearance of the Taylor instabilities. In addition, we require the parameters $\mu = \frac{\mu_0}{\mu_+}, \eta = \frac{R_+}{R_-}$ and $\rho = \frac{\rho_-}{\rho_+}$ to be of order one. 
%------------------------------------------------------
%------------------------------------------------------
\bigskip

\noindent\textsc{The dimensionless Navier--Stokes system in polar coordinates. } 
In order to perform the formal asymptotic analysis, we first introduce polar coordinates $\textbf{x} = (r \cos \theta, r \sin \theta)$ and write the velocity fields $\textbf{u}^\pm$ as 
\begin{equation*}
    \textbf{u}^\pm = u^\pm_r(r,\theta) \textbf{e}_r + u^\pm_\theta(r,\theta) \textbf{e}_\theta
    \quad \text{with} \quad
    \textbf{e}_r = (\cos \theta, \sin \theta)
    \quad \text{and} \quad
    \textbf{e}_\theta = (-\sin \theta,\cos \theta).
\end{equation*}
% \begin{itemize}
% 	\item Derivatives
% 		\begin{itemize}
% 			\item $\nabla \phi(r,\vartheta) = \partial_r \textbf{e}_r + \frac{1}{r} \partial_\vartheta \textbf{e}_\vartheta$
% 			\item $\nabla\cdot \textbf{u} = \frac{1}{r} \partial_r(r u_r) + \frac{1}{r} \partial_{\vartheta} u_\vartheta$
% 			\item $\Delta \phi(r,\vartheta) = \partial^2_r \phi + \frac{1}{r} \partial_r \phi + \frac{1}{r^2} \partial^2_\vartheta \phi$
% 		\end{itemize}
% \end{itemize}
Componentwise the conservation of momentum equations for the fluid next to the inner cylinder, i.e. in $\Omega_-(t)$ in these variables read
\begin{equation}\label{eq:NS_polar_inner}
	\begin{cases}
		\rho \left(\partial_t u^-_r + u_r^- \partial_r u^-_r + \frac{1}{r} u^-_\theta \partial_\theta u^-_r 
		- \frac{1}{r} (u^-_\theta)^2\right) 
		= & \\
		- \partial_r p^- 
		+ \frac{\mu}{\text{Re}} 
		\left(
		2 \partial_r \mu_- \partial_r u^-_r 
		+ 
		\mu_- 
		\left(\partial_r\left[\frac{1}{r} \partial_r(r u^-_r)\right] + \frac{1}{r^2} \partial^2_\theta u^-_r 
		- \frac{2}{r^2} \partial_\theta u^-_\theta\right)\right. &
		\\
		\left.
		+
		\partial_\theta \mu_- \left[\partial_r\left(\frac{1}{r} u^-_\theta\right) + \frac{1}{r^2}\partial_\theta u^-_r\right]\right)
		\quad
		\text{in } \Omega_-(t)
		&\\
		\rho \left(\partial_t u^-_\theta + u_r^- \partial_r u^-_\theta + \frac{1}{r} u^-_\theta \partial_\theta u^1_\theta 
		+ \frac{1}{r} u^-_r u^-_\theta\right) 
		= &\\
		- \frac{1}{r}\partial_\theta p^- 
		+ \frac{\mu}{\text{Re}} 
		\left(
		2 \partial_\theta \mu_- \left(\frac{1}{r^2}\partial_\theta u^-_\theta + \frac{1}{r^2} u^-_r\right)\right.
		&\\
		\left.
		+ 
		\mu_- 
		\left(\partial_r\left[\frac{1}{r} \partial_r(r u^-_\theta)\right] + \frac{1}{r^2} \partial^2_\theta u^-_\theta 
		+ \frac{2}{r^2} \partial_\theta u^-_r\right)
		+
		\partial_r \mu_- \left(r \partial_r\left(\frac{1}{r} u^-_\theta\right) + \frac{1}{r} \partial_\theta u^-_r\right)\right)
		\quad
		\text{in } \Omega_-(t).
	\end{cases}
\end{equation}
The conservation of momentum equation for the outer fluid in $\Omega_+(t)$ becomes, also componentwise,
\begin{equation}\label{eq:NS_polar_outer}
    \begin{cases}
		\rho \left(\partial_t u^+_r + u_r^+ \partial_r u^+_r + \frac{1}{r} u^+_\theta \partial_\theta u^+_r 
		- \frac{1}{r} (u^+_\theta)^2\right) 
		=
		- \partial_r p^+ 
		+ \frac{1}{\text{Re}} 
		\left(\partial_r\left[\frac{1}{r} \partial_r(r u^+_r)\right] + \frac{1}{r^2} \partial^2_\theta u^+_r 
		- \frac{2}{r^2} \partial_\theta u^+_\theta\right)
		&
		\text{in } \Omega_+(t)
		\\
		\rho \left(\partial_t u^+_\theta + u_r^2 \partial_r u^+_\theta + \frac{1}{r} u^+_\theta \partial_\theta u^+_\theta 
		+ \frac{1}{r} u^+_r u^+_\theta\right) 
		=
		- \frac{1}{r}\partial_\theta p^+ 
		+ \frac{1}{\text{Re}}  
		\left(\partial_r\left[\frac{1}{r} \partial_r(r u^+_\theta)\right] + \frac{1}{r^2} \partial^2_\theta u^+_\vartheta 
		+ \frac{2}{r^2} \partial_\theta u^+_r\right)
		&
		\text{in } \Omega_+(t),
	\end{cases}
\end{equation}
and the continuity equation transforms into
\begin{equation}\label{eq:NS_continuity_polar}
		\partial_r (r u^\pm_r) + \partial_\theta u^\pm_\theta
		=
		0
		\quad \text{in } \Omega_\pm(t).
\end{equation}
Finally, the boundary conditions in polar coordinates are given by
\begin{equation}\label{eq:NS_boundary_polar}
    \begin{cases}
    u^-_r(1,\theta) = 0, \quad u^-_\theta(1,\theta) = 0 & \\
      u^-_r \eps \partial_\theta h + u^-_\theta (1+\eps h) = u^+_r \eps \partial_\theta h + u^+_\theta (1+\eps h)& \\
    u^-_r (1+\eps h) - u^-_\theta \eps \partial_\theta h = u^+_r (1+\eps h) - u^+_\theta \eps \partial_\theta h = \eps(1+\eps h) \partial_t h & \\
    \left(\sigma_{rr}]^+_- - \sigma_{\theta\theta}]^+_-\right) \eps \partial_\theta h (1+\eps h) + \sigma_{r\theta}]^+_- \left((1+\eps h)^2 - \eps^2 \partial_\theta h\right) = 0 & \\
    \sigma_{rr}]^+_- (1+\eps h)^2 + \sigma_{\theta\theta}]^+_- \eps^2 (\partial_\theta h)^2 - 2 \sigma_{r\theta}]^+_- (1+\eps h) \eps \partial_\theta h =\gamma\kappa. &
    \end{cases}
\end{equation}

%------------------------------------------------------
For convenience, we perform another change of variables and set 
\begin{equation}\label{eq:change_of_var}
	 u^\pm_r = \eps^2 w^\pm_{\xi},
	 \quad
	 u^\pm_\theta = \eps w^\pm_\theta,
	 \quad
	 p^\pm = \frac{1}{\eps} P^\pm, 
	 \quad \text{and} \quad
	 \xi = \frac{r-1}{\eps}.
\end{equation}
Therewith the system \eqref{eq:NS_polar_inner}--\eqref{eq:NS_boundary_polar} transforms as follows:
% Partial derivatives:
% \begin{equation*}
% 	\partial_r u^{i}_r 
% 	=
% 	\eps \partial_\xi w^{i}_\xi,
% 	\quad
% 	\partial_r u^{i}_\vartheta
% 	=
% 	\partial_\xi w^{i}_\vartheta,
% 	\quad
% 	\partial_\vartheta u^{i}_r
% 	=
% 	\eps^2 \partial_\vartheta w^{i}_\xi,
% 	\quad
% 	\partial_\vartheta u^{i}_\vartheta
% 	=
% 	\eps \partial_\vartheta w^{i}_\vartheta
% \end{equation*}
%------------------------------------------------------
The conservation of momentum equation for the inner fluid in $\Omega_-(t)$ becomes
\begin{equation*}
	\begin{cases}
		&\rho \left(\eps^4 \partial_t w^-_\xi + \eps^5 w^-_\xi \partial_\xi w^-_\xi 
		+ \frac{\eps^5}{1+\eps\xi}  w^-_\theta \partial_\theta w^-_\xi 
		- \frac{\eps^4}{1+\eps\xi} (w^-_\theta)^2\right) 
		=
		\\
		&
		- \partial_\xi P^- 
		+ \frac{\mu}{\text{Re}} 
		\left(
		2 \eps^2 \partial_\xi \mu_- \partial_\xi w^-_\xi
		+ 
		\mu_- 
		\left[\eps^2 \partial_\xi\left(\frac{1}{1+\eps\xi} \partial_\xi\bigl((1+\eps\xi) w^-_\xi\bigr)\right) 
		+ \frac{\eps^4}{(1+\eps\xi)^2} \partial^2_\theta w^-_\xi 
		- \frac{2\eps^3}{(1+\eps\xi)^2} \partial_\theta w^-_\theta\right]\right.
		\\
		&
		\left.
		+
		\partial_\theta \mu_- \left[\eps^2 \partial_\xi\left(\frac{1}{1+\eps\xi} w^-_\theta\right) 
		+ \frac{\eps^4}{(1+\eps\xi)^2}\partial_\theta w^-_\xi\right]\right)
		\quad
		\text{in } \Omega_1(t)
		\\
		&\rho \left(\eps^2 \partial_t w^-_\theta + \eps^3 w^-_\xi \partial_\xi w^-_\theta 
		+ \frac{\eps^3}{1+\eps\xi} w^-_\theta \partial_\theta w^-_\theta
		+ \frac{\eps^4}{1+\eps\xi} w^-_\xi w^-_\theta\right) 
		=
		\\
		&- \frac{1}{1+\eps\xi}\partial_\theta P^-
		+ \frac{\mu}{\text{Re}} 
		\left(
		2\partial_\theta \mu_- \left[\frac{\eps^2}{(1+\eps\xi)^2} \partial_\theta w^-_\theta 
		+ \frac{\eps^3}{(1+\eps\xi)^2} w^-_\xi\right]\right.
		\\
		&
		\left.
		+ 
		\mu_- 
		\left[\partial_\xi\left(\frac{ 1}{1+\eps\xi}\partial_\xi\bigl((1+\eps \xi) w^-_\theta\bigr)\right)
		+ \frac{\eps^2}{(1+\eps\xi)^2} \partial^2_\theta w^-_\theta 
		+
		\frac{2\eps^3}{(1+\eps\xi)^2} \partial_\theta w^-_\xi\right]\right.
		\\
		&
		\left.
		+
		\partial_\xi \mu_- \left[ (1+\eps\xi) \partial_\xi\left(\frac{1}{1+\eps\xi} w^-_\theta\right) + \frac{\eps^3}{1+\eps\xi} \partial_\theta w^-_\xi\right]\right)
		\quad
		\text{in } \Omega_-(t).
	\end{cases}
\end{equation*}
Similarly, for the outer fluid, we obtain the conservation of momentum equations
\begin{equation*}
	\begin{cases}
		&\eps^4 \partial_t w^-_\xi + \eps^5 w^+_\xi \partial_\xi w^+_\xi 
		+ \frac{\eps^5}{1+\eps\xi} w_\theta^+ \partial_\theta w^+_\xi 
		- \frac{\eps^4}{1+\eps\xi} (w^+_\theta)^2
		=
		\\
		&
		- \partial_\xi P^+ 
		+ \frac{1}{\text{Re}} 
		\left(\eps^2 \partial_\xi\left[\frac{1}{1+\eps\xi} \partial_\xi\bigl((1+\eps\xi) w^+_\xi\bigr)\right] 
		+ \frac{\eps^4}{(1+\eps\xi)^2} \partial^2_\theta w^+_\xi 
		- \frac{2\eps^3}{(1+\eps\xi)^2} \partial_\theta w^+_\theta\right)
		\quad
		\text{in } \Omega_+(t)
		\\
		&\eps^2 \partial_t w^+_\vartheta + \eps^3 w^+_\xi \partial_\xi w^+_\theta 
		+ \frac{\eps^3}{1+\eps\xi} w^+_\theta \partial_\theta w^+_\theta
		+ \frac{\eps^4}{1+\eps\xi}w^+_\xi w^+_\theta
		=
		\\
		&- \frac{1}{1+\eps\xi}\partial_\theta P^+
		+ \frac{1}{\text{Re}} 
		\left(\partial_\xi\left[\frac{1}{1+\eps\xi} \partial_\xi\bigl((1+\eps\xi) w^+_\theta\bigr)\right] 
		+ \frac{\eps^2}{(1+\eps\xi)^2} \partial^2_\theta w^+_\theta 
		+\frac{2\eps^3}{(1+\eps\xi)^2} \partial_\theta w^+_\xi\right)
		\quad
		\text{in } \Omega_+(t).
	\end{cases}
\end{equation*}
The continuity equation in the new variables reads
\begin{equation*}
	\partial_\xi w^\pm_\xi + \eps \partial_\xi (\xi w^\pm_\xi) + \partial_\theta w^\pm_\theta
	=0
	\quad \text{in } \Omega_\pm(t),
\end{equation*}
and the boundary conditions transform into
\begin{equation}\label{eq:velocity_interface}
    \begin{cases}
	w_\xi^-(0,\theta) = 0, \quad w_\theta^-(0,\theta) = 0 
	&\\	
	\eps^2 w_\xi^- \partial_\theta h  +  w_\theta^- (1+\eps h)
	=
	\eps^2 w_\xi^+ \partial_\theta h  +  w_\theta^+(1+\eps h)\\
	\eps w_\xi^- (1+\eps h) - \eps w_\theta^- \partial_\theta h
	=
	\eps w_\xi^+ (1+\eps h) - \eps w_\theta^+ \partial_\theta h
	=
	(1+\eps h) \partial_t h
	&\\
	\bigl(\sigma_{\xi\xi}]^+_- - \sigma_{\theta\theta}]^+_-\bigr) \eps \partial_\theta h (1+\eps h)
	+
	\sigma_{\xi\theta}]^+_- \left((1+\eps h)^2 - \eps^2 (\partial_\theta h)^2\right)
	=
	0
    &	\\
	\sigma_{\xi\xi}]^+_- (1+\eps h)^2 
	+  
	\sigma_{\theta\theta}]^+_- \eps^2 (\partial_\theta h)^2
	-
	2 
	\sigma_{\xi\theta}]^+_-	(1+\eps h) \eps \partial_\theta h
	=
	\left((1+\eps h)^2 - \eps^2 (\partial_\theta h)^2\right) \gamma \kappa.
	\end{cases}
\end{equation}
For the elements of the stress tensors we obtain
\begin{equation*}
\begin{cases}
	\sigma_{\xi\xi}^\pm
	=
	-\frac{1}{\eps} P^\pm + \frac{2\eps}{\text{Re}} c^\pm \partial_{\xi} w_{\xi}^\pm
	&\\
	\sigma_{\xi\theta}^\pm
	=
	\frac{1}{\text{Re}} c^\pm \left(\partial_{\xi} w_{\theta}^\pm - \frac{\eps}{1+\eps \xi} w_{\theta}^\pm
	+ \frac{\eps^2}{1+\eps \xi} \partial_\theta w_\xi^\pm\right)
	&\\
	\sigma_{\theta\theta}^\pm
	=
	-\frac{1}{\eps} P^\pm + \frac{2}{\text{Re}(1+\eps \xi)} c^\pm 
	\left(\eps \partial_{\theta} w_\theta^\pm + \eps^2 w_\xi^\pm\right), &
\end{cases}
\end{equation*}
with $c^- = \mu \mu_-(\tau|\partial_\xi w_\theta^-|)$ and $c^+ = 1$.
Note that $\kappa$ is the rescaled mean curvature of the interface $r = 1 + \eps h(t,\vartheta)$, given by
\begin{equation} \label{eq:curvature}
	\kappa
	=
	\frac{2 \eps^2 (\partial_\theta h)^2 - \eps(1+\eps h) \partial_\theta^2 h + (1+\eps h)^2}
	{\left((1+\eps h)^2 - \eps^2 (\partial_\theta h)^2\right)^\frac{3}{2}}.
\end{equation}
In order to determine the equation for the evolution of the interface $h$, separating the two fluids, we keep only the terms of order one in $\eps$ in the system derived above.
%------------------------------------------------------
%------------------------------------------------------
\bigskip

\noindent\textsc{The leading order system. } In this paragraph we consider the formal asymptotic limit $\eps \to 0$. In the literature this limiting process is also referred to as lubrication approximation. For a rigorous justification of the lubrication approximation in the Newtonian case we refer the reader to the work \cite{GO:2003}.
Taking the formal asymptotic limit $\eps \to 0$, we obtain the following system. The Navier--Stokes equations reduce to
\begin{equation}\label{eq:LOS}
	\begin{cases}
		\partial_\xi P^\pm = 0 
		& 
		\text{in } \Omega_\pm(t)
		\\
		-\partial_\theta P^\pm +\frac{1}{\text{Re}} \partial_\xi (c^\pm \partial_\xi w_\theta^-) = 0
		& 
		\text{in } \Omega_\pm(t)
		\\
		\partial_\xi w_\xi^\pm + \partial_\theta w_\theta^\pm = 0
		& 
		\text{in } \Omega_\pm(t).
	\end{cases}
\end{equation}
%------------------------------------------------------
Moreover, for the boundary conditions we obtain
\begin{equation}\label{eq:boundary_cond:limit}
	\begin{cases}
	    w_\xi^-(0,\theta) = 0, \quad w_\theta^-(0,\theta) = 0
	    & \text{for } \theta \in S^1 \\
	    w_\theta^-
	    =
	    w_\theta^+ 
	    &\text{on } \Gamma\\
	    w^-_\xi  =
	    w^+_\xi 
	    & \text{on } \Gamma
	   
	    \\
	   	\frac{1}{\text{Re}} c^\pm \partial_{\xi} w_{\theta}^\pm]^+_-
	    =
	    0 & \text{on } \Gamma
	    \\
	    -P^\pm]^+_- 
	    =
	    \eps\gamma\kappa & \text{on } \Gamma,
	\end{cases}
\end{equation}
%------------------------------------------------------
where we used the coefficients of the stress tensors in the leading order. Note that we keep the $\eps$ in the last boundary condition in \eqref{eq:boundary_cond:limit} since we will choose $\gamma$, depending on $\eps$, later. The particular choice is made in such a way that the contribution of the term $\eps\gamma\kappa$ is of order one.
%-----------------------------------------
\bigskip

\noindent\textsc{Determination of the pressure and the velocity field by matched asymptotic expansions. }
With this reduced system we are able to determine explicit expressions for the pressure $P^\pm$ and the velocity field $(w_\xi^\pm,w_\theta^\pm)$. Indeed, from $\eqref{eq:LOS}_1$ we deduce $P^+ = P^+(\theta)$.
Thus, integrating $\eqref{eq:LOS}_2$ twice with respect to $\xi$ yields
\begin{equation*}
	w^+_\theta(\xi,\theta) = \frac{\text{Re}}{2} \partial_\theta P^+(\theta) \xi^2 + A^+(\theta) \xi + B^+(\theta)
\end{equation*}
for functions $A^+ = A^+(\theta)$ and $B^+=B^+(\theta)$ that are to be determined.
In view of the conservation of mass equation $\eqref{eq:LOS}_3$ we are finally able to derive an equation for $w_\xi^+(\xi,\theta)$. Summarising, for the Newtonian fluid next to the external boundary we obtain
\begin{equation}\label{eq:velocity+}
	\begin{cases}
		w_\theta^+(\xi,\theta)
		=
		\frac{\text{Re}}{2} \partial_\theta P^+(\theta) \xi^2 + A^+(\theta) \xi + B^+(\theta)
		& 
		\\
		w_\xi^+(\xi,\theta)
		=
		-\frac{\text{Re}}{6} \partial_\theta^2 P^+(\theta) \xi^3 - \partial_\theta A^+(\theta) \frac{\xi^2}{2}
		- \partial_\theta B^+(\theta) \xi + C^+(\theta).
		& 
	\end{cases}
\end{equation}
%------------------------------------------------------
For the fluid film next to the internal cylinder we proceed similarly. Recall that the fluid filling $\Omega_-(t)$ is assumed to be non-Newtonian. To leading order its viscosity is a function $\mu_- = \mu_-(\tau|\partial_\xi w_\theta^-|)$. In order to derive a well-posed parabolic equation for the interface separating the two fluids, we assume the shear stress $\mu_-(\tau|\partial_\xi w_\theta^-|)\partial_\xi w_\theta^-$ to be a monotonically increasing function of the shear rate $\partial_\xi w_\theta^-$.
In the leading-order approximation of the Navier-Stokes system this yields
\begin{equation}\label{eq:reduced-}
	\begin{cases}
		\partial_\xi P^- = 0 
		& 
		\text{in } \Omega_-(t)
		\\
		-\partial_\theta P^- 
		+ \frac{\mu}{\text{Re}} \partial_\xi \left(\mu_-(\tau|\partial_\xi w_\theta^-|)\partial_\xi w_\theta^-\right)
		= 0
		& 
		\text{in } \Omega_-(t)
		\\
		\partial_\xi w_\xi^- + \partial_\theta w_\theta^- = 0
		& 
		\text{in } \Omega_-(t).
	\end{cases}
\end{equation}
As for the outer fluid, the first equation implies that $P^- = P^-(\theta)$. 
Thus, by integration of $\eqref{eq:reduced-}_2$ with respect to $\xi$ we obtain 
\begin{equation*}
    \mu_-(\tau|\partial_\xi w_\theta^-|)\partial_\xi w_\theta^- = \frac{\text{Re}}{\mu} \partial_\theta P^-(\theta) \xi + A^-(\theta), \quad
    (\xi,\theta) \in \Omega_{-}(t).
\end{equation*}
Since $s \mapsto \mu_-(|s|)s$ is monotonically increasing on $\R$, we can define a function $\psi$ such that $\psi(\mu_-(|s|)s) = s$. Hence, for all $\xi < h$ we have
\begin{equation*}
    \partial_\xi w_\theta^- = \frac{1}{\tau}\psi\left(\frac{\tau \text{Re}}{\mu} \partial_\theta P^-(\theta) \xi + \tau A^-(\theta)\right).
\end{equation*}
Integration of this equation with respect to $\xi$ and exploiting the boundary condition $\eqref{eq:boundary_cond:limit}_1$ we obtain
\begin{equation}\label{eq:velocity-}
	w_\theta^{-}(\xi,\theta)
	=
	\frac{1}{\tau} \int_0^\xi \psi\left(\frac{\tau \text{Re}}{\mu} \partial_\theta P^-(\theta) s + \tau A^-(\theta)\right)\, ds.
\end{equation}

We can now use the boundary conditions in order to determine $A^\pm(\theta), B^+(\theta)$ and $C^+(\theta)$ by matched asymptotics. 
Since the non-Newtonian liquid film next to the internal cylinder is very thin compared the Newtonian fluid film,
we may assume the velocity of the outer fluid being a small perturbation of the Taylor--Couette flow for one single fluid confined between the two cylinders. This means we assume that the angular velocity, before the change of variables, is given by
\begin{equation*}
	u^+_{\theta}(r) = D_1 r + \frac{D_2}{r}
	\quad
	\text{with}
	\quad
	D_1 = \frac{\eta^2}{\eta^2-1}
	\quad
	\text{and} 
	\quad
	D_2 = -D_1.
\end{equation*}
Here we used that the radius of the internal cylinder is $1$, the radius of the external cylinder is $\eta$ and the external cylinder is rotating at angular velocity $\omega=1$, while the internal cylinder is at rest. The second-order Taylor series expansion of the Taylor--Couette flow $u^+_{\theta}(r)$ around $r=1$ is given by
\begin{equation*}
	u^+_{\theta}(r) 
	=
	(D_1 - D_2) (r-1) + D_2 (r-1)^2 + \mathcal{O}\bigl((r-1)^3\bigr).
\end{equation*}
With the change of variables introduced in \eqref{eq:change_of_var} this becomes
\begin{equation*}
	w^+_{\theta}(\xi) 
	=
	(D_1 - D_2) \xi + D_2 \eps \xi^2 + \mathcal{O}\bigl(\eps^2 \xi^3\bigr).
\end{equation*}
Thus, matching \eqref{eq:velocity+} with $w^+_\theta$, we get 
\begin{equation*}
	\frac{\text{Re}}{2} \partial_\theta P^+(\theta) \xi^2 + A^+(\theta) \xi + B^+(\theta)
	=
	w^+_\theta(\xi,\theta)
	=
	(D_1 - D_2) \xi + D_2 \eps \xi^2 + \mathcal{O}\bigl(\eps^2 \xi^3\bigr),
\end{equation*}
and consequently, $A^+(\theta) = D_1 - D_2 = 2\eta^2/(\eta^2-1)$.
Moreover, since the pressure in the Taylor--Couette flow is constant, that is $\partial_\theta P^+(\theta) = 0$, we have
\begin{equation*}
	w^+_\theta(\xi,\theta) = (D_1 - D_2) \xi + B^+(\theta) =\frac{2\eta^2}{\eta^2-1} \xi + B^+(\theta). 
\end{equation*}
Next, we determine $A^-(\theta)$. To this end, observe that 
\begin{equation*}
	\partial_\xi w^+_\theta(\xi,\theta) = (D_1 - D_2) = \frac{2\eta^2}{\eta^2-1},
\end{equation*}
thanks to the fact that the pressure $P^+$ is constant. Therefore, the tangential-stress balance condition $\eqref{eq:boundary_cond:limit}_4$, given by
\begin{equation*}
    \frac{1}{\text{Re}}\bigl(\partial_\xi w^+_\theta - \mu \mu_-\bigl(\tau|\partial_\xi w^-_\theta|\bigr) \partial_\xi w^-_\theta\bigr) = 0
    \quad \text{on } \Gamma,
\end{equation*}
yields, to leading order,
\begin{equation} \label{eq:A_1}
	A^-(\theta) = \frac{D}{\mu} - \frac{\text{Re}}{\mu} \partial_\theta P^-(\theta) h(\theta),
\end{equation}
where we set $D = D_1 - D_2 = 2D_1 = 2\eta^2/(\eta^2 - 1) > 0$.
Using this expression in $\eqref{eq:velocity-}$ yields

\begin{equation}\label{eq:w_theta^-}
	w^-_\theta(\xi,\theta)
	=
	\frac{1}{\tau}
	\int_0^\xi
	\psi\left(\frac{\tau D}{\mu} - \frac{\tau \text{Re}}{\mu} \partial_\theta P^1 \bigl(h(\theta) - s\bigr)\right)\, ds.
\end{equation}
Proceeding similarly as above, we may further determine the functions $B^+(\theta)$ and $C^+(\theta)$ by exploiting the different boundary conditions. 

However, we do not compute the explicit expressions since they are not needed in order to determine the equation for the evolution of the interface.

%-------------------------------------------
%-------------------------------------------
\bigskip

\noindent\textsc{Derivation of the evolution equation for $h$. }
We first recall from $\eqref{eq:boundary_cond:limit}_5$ that to leading order the normal-stress balance condition is $\sigma_{\xi\xi}]^+_- = \gamma \kappa$. This yields $-P^+ + P^- = \eps \gamma \kappa$.
Using the first-order Taylor approximation $\kappa = 1 - \eps(h + \partial^2_\theta h) + \mathcal{O}(\eps^2)$ of the mean curvature \eqref{eq:curvature} of the interface around $\eps = 0$ and the fact that $\partial_\theta P^+(\theta) = 0$, we obtain
\begin{equation}\label{eq:der_P_1}
	\partial_\theta P^-(\theta) 
	=
	-\eps^2 \gamma (\partial_\theta h + \partial^3_\theta h).
\end{equation}
Moreover, up to order $\eps$ the boundary condition \eqref{eq:velocity_interface} for the normal velocity of the interface may be written as
\begin{equation*}
    \partial_t h - \eps\bigl(w_\xi^-(h(\theta),\theta) - w_\theta^-(h(\theta),\theta) \partial_\theta h\bigr) = 0.
\end{equation*}
Inserting into this equation the identity
\begin{equation*}
    \partial_\theta \left(\int_0^{h(\theta)} w^-_\theta(\xi,\theta)\, d\xi\right)
    =
    w^-_\theta(h(\theta),\theta) \partial_\theta h(\theta)
	-
	w^-_\xi(h(\theta),\theta)
\end{equation*}
which follows from the conservation of mass, we obtain that the interface evolves according to the evolution equation
\begin{equation} \label{eq:evolu_h_2}
	\partial_t h+ \eps \partial_\theta \left(\int_0^{h(\theta)} w^-_\theta(\xi,\theta)\, d\xi\right) = 0, \quad
	t > 0,\ \theta \in S^1.
\end{equation}
Using the representation of $w_\theta^-(\xi,\theta)$, derived in \eqref{eq:w_theta^-}, and the representation of $\partial_\theta P^-(\theta)$, derived in \eqref{eq:der_P_1}, this equation may further be rewritten as
\begin{equation} \label{eq:PDEbeforesParam}
	\partial_t h + \frac{\eps}{\tau} \partial_\theta 
	\left(\int_0^{h(\theta)} \int_0^\xi 
	\psi\left(\frac{\tau D}{\mu} + \frac{\eps^2 \gamma \tau \text{Re}}{\mu} \bigl(\partial_\theta h + \partial_\theta^3 h\bigr) \bigl(h(\theta) -s\bigr)\right)\, ds\, d\xi\right) = 0, \quad t > 0,\ \theta \in S^1.
\end{equation}
In view of Fubini's theorem we find that
\begin{equation*} \label{eq:Fubini}
    \begin{split}
    \int_0^{h(\theta)} \int_0^\xi &
	\psi\left(\frac{\tau D}{\mu} + \frac{\tau \eps^2 \gamma \text{Re}}{\mu} \bigl(\partial_\theta h(\theta) + \partial_\theta^3 h(\theta)\bigr) \bigl(h(\theta) -s\bigr) \right)\, ds\, d\xi 
	\\&=
	\int_0^{h(\theta)} \int_s^{h(\theta)} 
	\psi\left(\frac{\tau D}{\mu} + \frac{\tau \eps^2 \gamma \text{Re}}{\mu} \bigl(\partial_\theta h(\theta) + \partial_\theta^3 h(\theta)\bigr) \bigl(h(\theta) -s\bigr)\right)\, d\xi\,ds
	\\&=
	\int_0^{h(\theta)} \bigl(h(\theta)-s\bigr)
	\psi\left(\frac{\tau D}{\mu} + \frac{\tau \eps^2 \gamma \text{Re}}{\mu} \bigl(\partial_\theta h(\theta) + \partial_\theta^3 h(\theta)\bigr) \bigl(h(\theta) -s\bigr)\right)\,ds
	\\&=
	\int_0^{h(\theta)} \tilde{s}
	\psi\left(\frac{\tau D}{\mu} + \frac{\tau \eps^2 \gamma \text{Re}}{\mu} \bigl(\partial_\theta h(\theta) + \partial_\theta^3 h(\theta)\bigr) \tilde{s}\right)\,d\tilde{s}
	\\&=
	h(\theta)^2 \int_0^1 z\, \psi\left(\frac{\tau D}{\mu} + z\, \frac{\tau \eps^2 \gamma \text{Re}}{\mu} h(\theta)\bigl(\partial_\theta h(\theta) + \partial_\theta^3 h(\theta)\bigr)\right)\,dz.
    \end{split}
\end{equation*}
Consequently, the evolution equation \eqref{eq:PDEbeforesParam} reads
\begin{equation} \label{eq:PDE2}
    \partial_t h + \frac{\eps}{\tau} \partial_\theta 
	\left(h(\theta)\e{2}\int_0^1 z\, \psi\left(\frac{\tau D}{\mu} + z\, \frac{\tau \eps^2 \gamma \text{Re}}{\mu} h(\theta)\bigl(\partial_\theta h(\theta) + \partial_\theta^3 h(\theta)\bigr)\right)\,dz\right) = 0, \quad t > 0,\ \theta \in S^1.
\end{equation}
\medskip

\noindent \textsc{The evolution equation for different scaling limits. } We now determine the evolution equation for different scaling limits of the surface tension forces and the shear forces induced by the rotation of the cylinder. To this end, we
define the parameters
\begin{equation*}
    A = \frac{\tau D}{\mu}, \quad B = \frac{\tau \eps^2 \gamma \text{Re}}{\mu} \quad \text{and} \quad \beta = \frac{A}{B} = \frac{D}{\gamma \eps^2 \text{Re}},
\end{equation*}
and rewrite the evolution equation \eqref{eq:PDEbeforesParam} in terms of $B$ and $\beta$ as
\begin{equation} \label{eq:B_beta}
    \partial_t h + \frac{\eps}{\tau} \partial_\theta \left(h(\theta)^2 \int_0^1 z\, \psi\Bigl(B \left[\beta + z\, h(\theta)\bigl(\partial_\theta h(\theta) + \partial_\theta^3 h(\theta)\bigr)\right]\Bigr)\,dz\right) = 0, \quad t > 0,\, \theta \in S^1.
\end{equation}
We recall that, in this paper, we consider only the regime in which the radii of the cylinders are of the same order but such that the two cylinders are not too close to each other. Therefore, $D > 0$ is just a non-dimensional geometrical constant.
We now discuss the structure of the equation for different ranges of the parameters $B$ and $\beta$. To this end, note first that, in physical variables, $B$ and $\beta$ are given by
\begin{equation*}
    B = \frac{\tilde{\gamma}\eps^2 \tau_{\text{char}}}{\mu_0 R_-} 
    \quad \text{and} \quad \beta = \frac{D R_- \omega \mu_+}{\eps^2 \tilde{\gamma}},
\end{equation*}
respectively, where we used the scaling for $\tilde{\gamma}, \tau, \text{Re}$ and $\mu$ introduced in \eqref{eq:scaling}. Thus, the parameter $B$ reflects the ratio of the shear forces induced by the surface tension over the characteristic shear $\frac{\mu_0}{\tau_{\text{char}}}$ associated to the non-Newtonian fluid and $\beta$ reflects the ratio of the shear forces induced by the rotating cylinder over the surface tension forces.

We first distinguish the asymptotic limits $B>0$ of order one, $B\to 0$ and $B \to \infty$, respectively. \medskip

\noindent (I) The case $B > 0$ of order one. In this case the effects of the surface tension are comparable with those of the characteristic stresses of the non-Newtonian fluid. Changing the variables via 
\begin{equation}\label{rescale}
    \tilde{h}=\sqrt{B}\,h 
    \quad \text{and} \quad
    \tilde{t}= \frac{\eps}{\tau}\frac{1}{\sqrt{B}}\,t,
\end{equation}
and then dropping the tildes for convenience, we have that the evolution equation \eqref{eq:B_beta} for the interface is given by
\begin{equation}\label{equevol_general}
    \partial_t h + \partial_\theta 
	\left(h(\theta)^2 \int_0^1 z \psi\left(\tilde{\beta} + 
	z\, h(\theta) \bigl(\partial_\theta h(\theta) + \partial^3_\theta h(\theta)\bigr)\right)\, dz\right) = 0, \quad t > 0,\ \theta \in S^1
\end{equation}
with $\tilde{\beta}=B\beta = \frac{D R_- \omega^2 \mu_+ \tau_{\text{char}}}{\mu_0}$.\medskip

\noindent (II) The cases $B\to 0$ and $B\to \infty$, respectively. The asymptotic limit $B\to 0$ corresponds to the situation in which the surface tension effects are dominated by the effects of the characteristic stresses of the non-Newtonian fluid. Conversely, the limit $B\to \infty$ represents the regime in which the  surface tension forces dominate the characteristic stresses of the non-Newtonian rheology. Suppose that the function $\psi$ is given such that
\begin{equation} \label{eq:psi}
    \psi(s) =
    \begin{cases}
        |s|^{\frac{1-p}{p}} s, \quad s \to 0, & \text{if } B\searrow 0
        \\
        |s|^{\frac{1-p}{p}} s, \quad s \to \infty, & \text{if } B\nearrow \infty
    \end{cases}
\end{equation}
where $p > 0$. Then, changing the times scale via
\begin{equation*}
    \tilde{t}= \frac{\eps}{\tau}B^\frac{1}{p}\,t
\end{equation*}
and dropping again the tilde for convenience, the evolution equation \eqref{eq:B_beta} becomes
\begin{equation}\label{equevol_power-law}
    \partial_t h + \partial_\theta 
	\left(h(\theta)^2 \int_0^1 z \left|\beta + z h(\theta) \bigl(\partial_\theta h(\theta) + \partial_\theta^3 h(\theta)\bigr)\right|^\frac{1-p}{p} \left(\beta + z h(\theta) \bigl(\partial_\theta h(\theta) + \partial_\theta^3 h(\theta)\bigr)\right)\, dz\right) = 0
\end{equation}
for all $t > 0,\ \theta \in S^1.$
\medskip

For both equations, \eqref{equevol_general} and \eqref{equevol_power-law} we now distinguish different asymptotic limits of the parameter $\beta = \frac{D R_- \omega \mu_+}{\eps^2 \tilde{\gamma}}$.\medskip

\noindent (I) The case $\frac{D R_- \omega \mu_+}{\tilde{\gamma}} \approx \eps^2$. In this asymptotic limit the surface tension forces and the shear forces induced by the rotating cylinder are comparable. Thus, we obtain the evolution equations
\begin{equation}\label{eq:CaseI_1}
    \partial_t h + \partial_\theta 
	\left(h(\theta)^2 \int_0^1 z \psi\left(\tilde{\beta} + 
	z\, h(\theta) \bigl(\partial_\theta h(\theta) + \partial^3_\theta h(\theta)\bigr)\right)\, dz\right) = 0 
\end{equation}
	and
\begin{equation}\label{eq:CaseI_2}
	\partial_t h + \partial_\theta 
	\left(h(\theta)^2 \int_0^1 z \left|\beta + z h(\theta) \bigl(\partial_\theta h(\theta) + \partial_\theta^3 h(\theta)\bigr)\right|^\frac{1-p}{p} \left(\beta + z h(\theta) \bigl(\partial_\theta h(\theta) + \partial_\theta^3 h(\theta)\bigr)\right)\, dz\right) = 0
\end{equation}
for $t > 0,\ \theta \in S^1$,
with $\tilde{\beta}, \beta > 0$ being positive constants. These two equations are studied in Section \ref{sec:comparable_timescales}, where we prove that if the initial interface is close to a circle, then the solution is globally defined and converges to a circle which is not necessarily concentric with the two cylinders. However, as time tends to infinity, the center of the circle spirals towards the common center of the cylinders.\medskip

\noindent(II) The case $\frac{D R_- \omega \mu_+}{\tilde{\gamma}} \ll \eps^2$. This corresponds to the asymptotic limit $\beta \to 0$ in which the effects of surface tension on the flow are dominating the shear effects induced by the rotation of the cylinders are negligible. The evolution equations for this setting read
\begin{equation}\label{eq:CaseII_1}
    \partial_t h + \partial_\theta 
	\left(h(\theta)^2 \int_0^1 z \psi\left(
	z\, h(\theta) \bigl(\partial_\theta h(\theta) + \partial^3_\theta h(\theta)\bigr)\right)\, dz\right) = 0, \quad t > 0,\ \theta \in S^1  
\end{equation}
and
\begin{equation}\label{eq:CaseII_2}
	\partial_t h + \partial_\theta 
	\left(h(\theta)^\frac{2p-1}{p} \left| \partial_\theta h(\theta) + \partial_\theta^3 h(\theta)\right|^\frac{1-p}{p} \bigl(\partial_\theta h(\theta) + \partial_\theta^3 h(\theta)\bigr)\right) = 0, \quad t > 0,\ \theta \in S^1,
\end{equation}
respectively.
Equation \eqref{eq:CaseII_2}, corresponding to the function $\psi$ defined in \eqref{eq:psi}, is studied in Section \ref{sec:different_time_scales}. We prove existence of positive weak solutions for short times. For $p>1$, we show that solutions that are originally close to a circle, converge to a circle in finite time. We recall that, as discussed in the introduction, in the regions where $\de{}h+\de{3}h$ is small, boundary layer effects can arise. 
\medskip

\noindent(III) The case $\frac{D R_- \omega \mu_+}{\tilde{\gamma}} \gg \eps^2$. This reflects the situation in which the shear stress induced by the rotation of the cylinders dominates the surface tension such that $\beta \to \infty$. We remark that this asymptotic limit is not studied in the present paper.

%%---------------------------------------------
%%---------------------------------------------
\bigskip

\noindent\textsc{Power-law fluids. }
 In this paper we are particularly interested in the case in which the viscous behaviour of the thin non-Newtonian fluid film is governed by a power-law. That is, for the effective viscosity $\mu_-$ we use the constitutive law
\begin{equation}\label{eq:power-law}
	\mu_-(\tau|\partial_\xi w^-_\theta|)
	=
	\tau^{p-1}|\partial_\xi w^-_\theta|^{p-1}
\end{equation}
with $p > 0$.
Fluids with such a viscosity are usually called power-law fluids or Ostwald--de Waele fluids.
Recall that a flow-behaviour exponent $p=1$ corresponds to a Newtonian fluid. Moreover, 
for $p<1$ the fluid is shear-thinning, while it is shear-thickening for $p>1$.

%%---------------------------------------------
%%---------------------------------------------
\bigskip
\section{The case $\beta \to 0$ -- Existence result and asymptotic behaviour} \label{sec:different_time_scales}

In this section we deal with the asymptotic limit $\beta \to 0$ in which we have derived the approximation \eqref{eq:CaseII_2}. We first prove local in time existence of weak solutions in the shear-thickening, as well as in the shear-thinning regime. For the latter case, we then study the asymptotic behaviour of solutions that are initially not too far from a circle. We observe that they converge to a circle in finite time and then continue to exist as a circle forever. The center of the circle is not necessarily the origin, differently from the case in which $\beta$ is of order one that is discussed in Section \ref{sec:comparable_timescales}. 

We recall that, as discussed in the introduction,  equation \eqref{eq:CaseII_2} cannot expected to be a good approximation of \eqref{equevol_power-law} if $\abs{(\partial_\theta h + \partial_\theta^3 h)}\ll\beta$. Since for a circular interface we have that $(\partial_\theta h + \partial_\theta^3 h)=0$, and the model \eqref{eq:CaseII_2} predicts that the interface becomes a circle in finite time, it follows that \eqref{eq:CaseII_2} cannot describe the solutions of \eqref{equevol_power-law} for long times. Therefore, \eqref{eq:CaseII_2} describes only the intermediate asymptotics of the interfaces when they are not yet very close to circles.

Before proving local existence of positive weak solutions, we briefly introduce the notation used throughout the paper. We identify $S^1$ with the interval $[0,2\pi]$. Moreover, we identity functions $\phi \in L_p(S^1)$ with functions $\phi \in L_{p,\text{loc}}(\R)$ which are periodic with period $2\pi$. Here, $L_p(\R)$ denotes the usual Lebesgue space. 
Finally, by $W^k_p(S^1)$ and $H\e{k}(S\e{1})$ we denote the usual Sobolev spaces. They are defined as the closure of the restriction of $2\pi$-periodic functions in $C\e{\infty}(\R)$ to the interval $[0,2\pi]$ with respect to the norm in $W^k_p((0,2\pi))$, respectively $H\e{k}((0,2\pi))$. 
In order to simplify notation, we consider $W^k_p(S^1)$ and $H^k(S^1)$ as a closed subspaces of the complex spaces $W\e{k}_p(S\e{1};\C)$ and  $H\e{k}(S\e{1};\C)$, respectively. In particular, we can represent any $f\in H\e{k}(S\e{1})$ by its Fourier series
\begin{equation}\label{espacioswzeta}
	f(\theta)=\sum_{n\in\Z} a_n e\e{in\theta},
	\quad \theta \in S^1,\quad\text{with}\quad a_n=\overline{a_n}.
\end{equation}

%-------------------------------------------------------------
%-------------------------------------------------------------
%-------------------------------------------------------------
%-------------------------------------------------------------

\subsection{Local existence of positive weak solutions} \label{sec:local_existence}

In this section we prove existence of local weak solutions to the problem
\begin{equation}\tag{P}\label{eq:pde}
%(\star)\quad\quad\quad
\begin{cases}
\partial_t h + \de{}\left(h^{\alpha+2} \left|\de{}h+\de{3}h\right|^{\alpha-1} \bigl(\de{}h+\de{3}h\bigr)\right)
=
0,
&
t > 0,\, \theta \in S^1
\\
h(0,\cdot)
=
h_0(\cdot), &
\theta \in S^1,
\end{cases}
\end{equation}
with periodic boundary conditions and for all flow behaviour exponents $\alpha > 0$. We use the notation
\begin{equation*}
    \psi(s) = \left|s\right|^{\alpha-1} s,
    \quad s \in \R,
\end{equation*}
such that the evolution equation \eqref{eq:pde} can be written as
\begin{equation*}
\partial_t h + \de{}\left(h^{\alpha+2} \psi\bigl(\de{}h+\de{3}h\bigr) \right)
=
0,
\quad
t > 0,\, \theta \in S^1.
\end{equation*}
Note that this equation is a quasilinear equation of fourth order that may degenerate in $h$ and $\de{}h+\de{3}h$. In the non-degenerate case of a positive film height the equation is parabolic. Moreover, the coefficients of the highest-order terms depend only $(\alpha-1)$-H\"older-continuously on the lower-order terms. In order to prove the existence of local positive weak solutions, we follow the usual ansatz of regularising the equation and showing that the sequence of solutions to the regularised problem has an accumulation point $h$ which is a weak solution to the original problem.
The compactness arguments mainly rely on a-priori estimates that are derived from the functional
\begin{equation*}
	E[v] = \frac{1}{2} \int_{S^1} \left((\de{}v)^2 -v^2\right)\, d\theta.
\end{equation*}
Even if $E[v](t)$ is not necessarily non-negative, we refer to it as an energy functional. 
To be able to pass to the limit in the nonlinear terms we use lower semicontinuity and apply Minty's trick.  

% %--------------------------------------------------------
% %--------------------------------------------------------

%--------------------------------------------------------
%--------------------------------------------------------
The main result of this subsection is the following theorem on the existence of weak solutions to \eqref{eq:pde}.

%--------------------------------------------------------
%--------------------------------------------------------

\begin{theorem} \label{thm:existence}
Given an initial film height $h_0 \in  H^1(S^1)$ with $0 < C_0 \leq h_0(\theta)$ for all $\theta \in S^1$, there exist a positive time $T > 0$ and a positive weak solution $h$ of \eqref{eq:pde} on $[0,T]$ in the sense that $0 < C_1 \leq h(t,\theta) \leq C_2$ for all $t \in [0,T],\, \theta \in S^1$,
\begin{itemize}
    \item[(i)] $h$ has the regularity
        $$
            h \in L_{\alpha+1}\bigl((0,T);W^3_{\alpha+1}(S^1)\bigr) \cap C\bigl([0,T];H^1(S^1)\bigr),
            \quad
            \partial_t h \in L_\frac{\alpha+1}{\alpha}\bigl((0,T);(W^1_{\alpha+1}(S^1))'\bigr);
        $$
    \item[(ii)] $h$ satisfies the integral equation
    $$
	    \int_0^T \langle \partial_t h(t),\varphi(t)\rangle_{W^1_{\alpha+1}(S^1)}\, dt
	    =
	    \int_0^T \int_{S^1} h^{\alpha+2} \psi(\de{}h+\de{3}h) \de{}\varphi\, d\theta\, dt
	$$
	for all test functions $\varphi \in L_{\alpha+1}\bigl((0,T);W^1_{\alpha+1}(S^1)\bigr)$; 
	\item[(iii)] $h$ satisfies the initial condition $h(0,\theta) = h_0(\theta)$ for all $\theta \in S^1$.
\end{itemize}
In addition, this solution has the following properties:
\begin{itemize}
    \item[(iv)] (Conservation of mass) The mass of the fluid is conserved in the sense that
    $$
        \left\|h(t)\right\|_{L_1(S^1)} = \left\|h_0\right\|_{L_1(S^1)}
    $$
    for all $t \in [0,T]$.
    \item[(v)] (Energy dissipation) The solution dissipates energy in the sense that
    \begin{equation*}
        E[h](t) + \int_0^T \int_{S^1} \left|h\right|^{\alpha+2} \left|\de{}h+\de{3}h\right|^{\alpha+1}\, d\theta\, dt = E[h_0].
    \end{equation*}
\end{itemize}
\end{theorem}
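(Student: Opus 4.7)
The strategy is the classical one for doubly-nonlinear degenerate fourth-order equations: regularize the singular nonlinearity, construct solutions by a Galerkin approximation on Fourier subspaces, derive a-priori bounds through the energy functional $E$ and conservation of mass, and then pass to the limit by combining Aubin--Lions-type compactness with Minty's monotonicity trick to identify the nonlinear flux. More precisely, I would introduce $\psi_\delta(s)=(s^2+\delta^2)^{(\alpha-1)/2}s$ for $\delta\in(0,1]$ and, for each $N\in\N$, set $V_N=\spn\{e^{ik\theta}\,;\,|k|\leq N\}$ and look for $h_{N,\delta}\in C^1([0,T_{N,\delta}];V_N)$ satisfying
\[
\int_{S^1}\partial_t h_{N,\delta}\,\varphi\,d\theta=\int_{S^1} h_{N,\delta}^{\alpha+2}\,\psi_\delta(\de{}h_{N,\delta}+\de{3}h_{N,\delta})\,\de{}\varphi\,d\theta\quad\text{for all }\varphi\in V_N,
\]
with initial datum the $L^2$-projection of $h_0$ onto $V_N$. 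Local-in-time existence of $h_{N,\delta}$ follows from Picard--Lindel\"of, the right-hand side being locally Lipschitz in the Fourier coefficients as long as $h_{N,\delta}>0$.

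\textbf{A-priori estimates and positivity.} Testing the equation with $\varphi=-(h_{N,\delta}+\de{2}h_{N,\delta})\in V_N$ and integrating by parts on $S^1$ gives the energy identity
\[
\frac{d}{dt}E[h_{N,\delta}](t)+\int_{S^1}h_{N,\delta}^{\alpha+2}\left|\de{}h_{N,\delta}+\de{3}h_{N,\delta}\right|^{\alpha+1}d\theta=0,
\]
while testing with $\varphi\equiv 1$ yields conservation of mass. Combined with the Poincar\'e--Wirtinger inequality, these bound $h_{N,\delta}$ uniformly in $L^\infty((0,T);H^1(S^1))$ and the dissipation uniformly in $L^1((0,T)\times S^1)$. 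A duality argument based on $h_{N,\delta}^{\alpha+2}\psi_\delta(\de{}h_{N,\delta}+\de{3}h_{N,\delta})\in L^{(\alpha+1)/\alpha}$, together with the Sobolev embedding $H^1(S^1)\embed C^{0,1/2}(S^1)$, then furnishes equicontinuity of the family $\{h_{N,\delta}\}$ in $C([0,T^\ast];C(S^1))$. Since $h_0\geq C_0>0$ is continuous, this equicontinuity allows the choice of a uniform time $T^\ast>0$ such that $h_{N,\delta}(t,\theta)\geq C_0/2$ on $[0,T^\ast]\times S^1$, which simultaneously extends the Galerkin solutions up to $T^\ast$, removes the $h^{\alpha+2}$-degeneracy, and provides the positive lower bound claimed in the theorem.

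\textbf{Passage to the limit; the main obstacle.} Aubin--Lions then yields a subsequence $h_{N,\delta}\to h$ converging strongly in $C([0,T^\ast];L^p(S^1))$ for every $p<\infty$, weakly-$\ast$ in $L^\infty H^1$, with $\de{}h_{N,\delta}+\de{3}h_{N,\delta}\rightharpoonup\de{}h+\de{3}h$ weakly in $L^{\alpha+1}((0,T^\ast)\times S^1)$. The crux of the proof is that one only has weak convergence of the argument of $\psi_\delta$, so passing to the limit in the nonlinear flux $h_{N,\delta}^{\alpha+2}\psi_\delta(\de{}h_{N,\delta}+\de{3}h_{N,\delta})$ is not immediate. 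I would apply Minty's monotonicity trick: for any smooth test function $v$, strict monotonicity of $\psi_\delta$ and the uniform positivity of $h_{N,\delta}^{\alpha+2}$ give
\[
\int_0^{T^\ast}\!\!\int_{S^1} h_{N,\delta}^{\alpha+2}\bigl[\psi_\delta(\de{}h_{N,\delta}+\de{3}h_{N,\delta})-\psi_\delta(\de{}v+\de{3}v)\bigr]\bigl(\de{}h_{N,\delta}+\de{3}h_{N,\delta}-\de{}v-\de{3}v\bigr)d\theta\,dt\geq 0,
\]
and passing to the limit (using strong convergence of $h^{\alpha+2}$, the energy identity to recover the diagonal nonlinear term, and lower semicontinuity of the convex dissipation functional) and then varying $v$ identifies the weak limit of the flux as $h^{\alpha+2}\psi(\de{}h+\de{3}h)$. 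The regularity (i) and the integral equation (ii) then follow from the uniform bounds; the initial condition (iii) from the $C([0,T^\ast];L^2)$-continuity; mass conservation (iv) from $L^1$-convergence of $h_{N,\delta}(t,\cdot)$; and the energy identity (v) by combining weak lower semicontinuity in the limit with the reverse inequality obtained by testing the limit equation with $-(h+\de{2}h)$.
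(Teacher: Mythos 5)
Your overall strategy (regularise $\psi$, construct approximate solutions, use the energy $E$, mass conservation and the dual bound on $\partial_t h$ for compactness, then identify the flux by Minty) is the same skeleton as the paper's proof; the genuine difference is the approximation scheme. The paper does not use Galerkin: it solves a mollified PDE in which the mobility is truncated to a smooth $m_\eps$ coinciding with $|s|^{\alpha+2}$ away from zero and, crucially, convolution operators $\eta_\eps$ are inserted both around the flux and around the argument $\partial_\theta h+\partial_\theta^3 h$, so that classical parabolic solutions exist by a fixed-point argument. Your trigonometric Galerkin spaces buy you the same key feature for free (the test functions $1$ and $h_N+\partial_\theta^2 h_N$ lie in $V_N$, so mass conservation and the energy identity hold exactly at the approximate level), and your route to uniform positivity (flux in $L_{(\alpha+1)/\alpha}$, hence $\partial_t h$ bounded in $L_{(\alpha+1)/\alpha}\bigl((0,T);(W^1_{\alpha+1})'\bigr)$, hence uniform temporal equicontinuity into $C(S^1)$ after interpolating with the $H^1$ bound) is the Bernis--Friedman-type analogue of the paper's Lemmas on $\|h^\eps(t)-h_0\|_{H^{1/2}}$; note only that for $\alpha>1$ the naive ``duality plus $H^1\hookrightarrow C^{0,1/2}$'' interpolation is not immediate, since $W^1_{\alpha+1}$ is then not controlled by $H^1$, and one needs a mollification argument exactly of the type of the paper's interpolation Lemma.

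The concrete gap is the third-derivative control, which you assert but never obtain: the dissipation only bounds $h^{\alpha+2}\,|\partial_\theta h+\partial_\theta^3 h|^{\alpha+1}$, and the operator $\partial_\theta+\partial_\theta^3$ annihilates the Fourier modes $n=0,\pm1$, so the claimed regularity $h\in L_{\alpha+1}\bigl((0,T);W^3_{\alpha+1}(S^1)\bigr)$ in (i) does not follow from the bounds you list. The paper closes this by a Fourier-multiplier (Littlewood--Paley) argument: on the orthogonal complement of $\mathrm{span}\{1,\cos\theta,\sin\theta\}$ the multiplier $n^3/(n^3-n)$ is bounded, giving $\|\partial_\theta^3 v\|_{L_{\alpha+1}}\lesssim\|(\partial_\theta+\partial_\theta^3)v\|_{L_{\alpha+1}}$, while the $n=\pm1$ part is estimated separately through the $H^1$ bound. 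This is not a cosmetic omission in your plan: both your Minty step and your proof of the energy identity (v) require testing the limit equation with $h+\partial_\theta^2 h$ (to get $E[h](t)+\langle\Phi,\partial_\theta h+\partial_\theta^3 h\rangle=E[h_0]$), and this is an admissible test function in $L_{\alpha+1}\bigl((0,T);W^1_{\alpha+1}(S^1)\bigr)$ only once the $W^3_{\alpha+1}$ bound is in hand; one also needs a chain-rule/continuity statement (the paper invokes $\partial_\theta h\in C\bigl([0,T];L_2(S^1)\bigr)$ via a result of Bernis) to equate $\int_0^t\langle\partial_t h,h+\partial_\theta^2 h\rangle\,ds$ with $E[h](t)-E[h_0]$. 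Without these ingredients the identification of the limit flux and parts (i) and (v) of the theorem remain unproved.
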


%--------------------------------------------------------
%--------------------------------------------------------

\noindent\textsc{The mollified problem. } 
To overcome the problems caused by the degeneracy and the lack of regularity of \eqref{eq:pde}, we introduce a mollified version of \eqref{eq:pde} as follows. Let
\begin{equation*}
	\rho \in C^\infty_c(\R)
	\quad
	\text{with}
	\quad
	\int_{\R} \rho(\theta)\, d\theta = 1
\end{equation*}
be the usual mollifier such that, for $\eps \in (0,1)$, 
\begin{equation*}
	\rho_\eps(\theta) = \frac{1}{\eps} \rho\left(\frac{\theta}{\eps}\right)
	\quad
	\text{and}
	\quad
	\eta_\eps(v)(\theta) = (\rho_\eps \ast v)(\theta),
\end{equation*}
where $\ast$ denotes convolution. Note that the parameter $\eps$ in this section is a regularisation parameter that is not related to the average dimensioless height $\eps$ of the film, defined in \eqref{eq:scaling}.

Moreover, we use the notation
\begin{equation*}
	\bar{v} = \frac{1}{2\pi} \int_{S^1} v(\theta)\, d\theta 
\end{equation*}
for the average of a function $v \in L_2(S^1)$. Since our proofs strongly rely on Fourier analysis, we use this notation frequently for the zeroth Fourier mode. Therewith, for a fixed $\eps \in (0,1)$, we replace the mobility $h^{\alpha+2}$ by a function 
\begin{equation}\label{eq:def_m_eps}
	m_\eps \in C^\infty\bigl(\R;\R_{\geq 0}\bigr) 
	\quad \text{with} \quad
	m_\eps(s) = |s|^{\alpha+2} 
	\quad \text{for} \quad
	\frac{\bar{h}_0}{2} \leq |s|
	\quad \text{and} \quad 
	m_\eps(s) \leq |s|^{\alpha+2},\ s \in \R.
\end{equation}
and the nonlinear term $\psi(\partial_\theta h + \partial_\theta^3 h)= |\partial_\theta h + \partial_\theta^3 h|^{\alpha-1}(\partial_\theta h + \partial_\theta^3 h)$ by 
\begin{equation*}
	\psi_\eps(s) = \bigl(s^2 + \eps^2\bigr)^\frac{\alpha-1}{2} s.
\end{equation*}
Then, we also have $\psi_\eps \in C^\infty\bigl(\R_{\geq 0};\R_{\geq 0}\bigr)$. Finally, we introduce the regularised / mollified problem 
\begin{equation}\tag{$P_\eps$}\label{eq:pde_eps}
	%(\star)_\eps\quad\quad\quad
	\begin{cases}
	\partial_t h^\eps + \de{}\Bigl(\eta_\eps \left[m_\eps(h^\eps)\, \psi_\eps\bigl(\eta_\eps\bigl(\de{}h\e{\eps}+\de{3}h\e{\eps}\bigr)\bigr)\right]\Bigr)
	=
	0,
	&
	t > 0,\, \theta \in S^1
	\\
	h^\eps(0,\cdot)
	=
	h_0(\cdot), &
	\theta \in S^1,
	\end{cases}
\end{equation}
with periodic boundary conditions. We are interested in solving \eqref{eq:pde_eps} for initial values $h_0$ that are close to their constant average $\bar{h}_0 > 0$.

In order to solve \eqref{eq:pde} different regularisation techniques are certainly possible. In \cite{BF:1990} the authors propose two different regularisation techniques for the Newtonian thin-film equation. In both approaches the regularisation is restricted to the mobility coefficient. Moreover in \cite{AG:2004}, where the authors prove global existence for a doubly nonlinear equation the nonlinearity of which contains only the third-order derivative, a two-step regularisation is chosen. The authors regularise the mobility coefficient and introduce an artificial lower-order term, to guarantee positivity of the regularised solution on the one hand and to obtain sufficient regularity of the limit problem with the non-regularised mobility.

It follows from a standard fixed-point argument (see for instance \cite{majda}) that the regularised problem \eqref{eq:pde_eps} possesses a local solution as stated in the following theorem. 

%--------------------------------------------------------
%--------------------------------------------------------

\begin{theorem} \label{thm:existence_eps}
	Let $\eps \in (0,1)$. Given an initial value $h^\eps_0 = h_0 \in H^1(S^1)$, there exists a positive time $T_\eps > 0$, possibly depending on $\eps$, and a unique solution $h^\eps$ of \eqref{eq:pde_eps} on $[0,T_\eps]$ such that
	\begin{equation*}
	    h^\eps \in C\bigl([0,T_\eps);H^1(S^1)\bigr) \cap C\bigl((0,T_\eps);H^4(S^1)\bigr) \cap C^1\bigl([0,T_\eps);L_2(S^1)\bigr)
	\end{equation*}
	and $h^\eps$ satisfies the integral equation
	\begin{equation}\label{eq:weak_solution_eps}
	    \int_0^{T_\eps} \langle \partial_t h^\eps(t),\varphi(t)\rangle_{W^1_{\alpha+1}(S^1)}\, dt
	    =
	    \int_0^{T_\eps} \int_{S^1} 
	    \eta_\eps \Bigl(m_\eps(h^\eps)\, \psi_\eps\bigl(\eta_\eps\bigl(\de{}h\e{\eps}+\de{3}h\e{\eps}\bigr)\bigr)\Bigr)
	    \de{}\phi\,
	    d\theta\, dt
	\end{equation}
	for all test functions $\phi \in L_{\alpha+1}\bigl((0,T_\eps);W^1_{\alpha+1}(S^1)\bigr)$. 
\end{theorem}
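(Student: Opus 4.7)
The plan is to recast \eqref{eq:pde_eps} as an abstract Cauchy problem on the Banach space $H^1(S^1)$ and invoke the Picard--Lindelöf theorem. The structural observation that makes this work is that the mollifier $\eta_\eps$ converts every spatial derivative inside it into a bounded operator on $L_2(S^1)$: for any $k \in \N$,
\begin{equation*}
\eta_\eps(\de{k}v) \;=\; (\de{k}\rho_\eps) \ast v,
\qquad v \in L_2(S^1),
\end{equation*}
and $\de{k}\rho_\eps \in C^\infty_c(\R)$, so this defines a bounded linear map from $L_2(S^1)$ into $C^\infty(S^1)$ with norm depending on $\eps$.

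First, I would define the nonlinear operator
\begin{equation*}
F_\eps(h) \;:=\; \de{}\bigl(\eta_\eps\bigl[m_\eps(h)\,\psi_\eps\bigl(\eta_\eps(\de{}h+\de{3}h)\bigr)\bigr]\bigr),\qquad h \in H^1(S^1),
\end{equation*}
and verify that $F_\eps\colon H^1(S^1) \to H^1(S^1)$, with values in $C^\infty(S^1)$, is locally Lipschitz. The argument uses that $h \mapsto \eta_\eps(\de{}h + \de{3}h)$ is bounded linear from $L_2$ to $C^\infty$; composition with the smooth nonlinearities $m_\eps, \psi_\eps$ and multiplication by $m_\eps(h) \in H^1$ stays locally Lipschitz; the outer $\de{}\eta_\eps$ is again smoothing. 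In particular, $F_\eps$ is locally Lipschitz from $H^1$ to every $H^k(S^1)$.

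Second, equation \eqref{eq:pde_eps} is then equivalent to the abstract ODE $\dot h^\eps = -F_\eps(h^\eps)$ with initial datum $h^\eps(0) = h_0$, to which the Banach fixed-point theorem applies on a small ball in $C([0,T_\eps]; H^1(S^1))$. This produces a unique local solution with positive existence time $T_\eps$ controlled by the local Lipschitz constant of $F_\eps$ near $h_0$. Because $F_\eps$ takes values in $C^\infty(S^1)$, the time derivative $\partial_t h^\eps = -F_\eps(h^\eps)$ belongs to $C([0,T_\eps]; L_2(S^1))$, yielding the stated $C^1$-regularity in time. Writing $h^\eps(t) = h_0 + \int_0^t \bigl(-F_\eps(h^\eps(s))\bigr)\, ds$ shows that $h^\eps(t) - h_0$ is smooth in $\theta$ for each $t \in (0, T_\eps]$, which combined with a standard bootstrap using the equation yields the claimed $h^\eps \in C((0,T_\eps); H^4(S^1))$. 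The integral identity \eqref{eq:weak_solution_eps} is obtained from the strong formulation by integration by parts in $\theta$ and duality pairing in time.

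The main technical step is the verification of the local Lipschitz estimate for $F_\eps$ in the $H^1$-topology. By the chain rule, controlling $\de{}[m_\eps(h)\,\psi_\eps(\eta_\eps(\de{}h+\de{3}h))]$ in $L_2$ reduces to products of $L_\infty$-bounded factors with $\de{}h \in L_2$, as long as $h$ remains in a bounded neighbourhood of $h_0$ in $H^1$; here the smoothness of $m_\eps, \psi_\eps$ and the mollifier bounds take care of everything. All constants of course depend badly on $\eps$, but this is harmless for the present local result; the delicate uniform-in-$\eps$ analysis needed for the passage to the limit in Theorem \ref{thm:existence} is carried out separately via the energy functional $E[\cdot]$.
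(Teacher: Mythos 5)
Your overall strategy is exactly the one the paper has in mind: the paper's entire proof of Theorem \ref{thm:existence_eps} consists of a citation of ``a standard fixed-point argument (see for instance \cite{majda})'', and that Majda-style argument is precisely your reduction of \eqref{eq:pde_eps} to an abstract ODE $\dot h^\eps=-F_\eps(h^\eps)$ on $H^1(S^1)$, exploiting that every derivative sitting under a mollifier becomes convolution with a derivative of the kernel, $\eta_\eps(\de{k}v)=(\de{k}\rho_\eps)\ast v$, hence a bounded ($\eps$-dependent) operator. Your local Lipschitz verification is sound, and in fact easier than you present it: since the outer operator $\de{}\eta_\eps=(\de{}\rho_\eps)\ast(\cdot)$ maps $L_1(S^1)$ boundedly into $C^\infty(S^1)$, it suffices to check that $h\mapsto m_\eps(h)\,\psi_\eps\bigl(\eta_\eps(\de{}h+\de{3}h)\bigr)$ is locally Lipschitz from $H^1(S^1)$ into $L_2(S^1)$; there is no need to differentiate the product. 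The uniqueness, the $C^1\bigl([0,T_\eps);L_2(S^1)\bigr)$ (indeed $C^1\bigl([0,T_\eps);H^1(S^1)\bigr)$) regularity, and the passage to the integral identity \eqref{eq:weak_solution_eps} are all fine.

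The one step that does not go through is the claimed $h^\eps\in C\bigl((0,T_\eps);H^4(S^1)\bigr)$. Because all spatial derivatives in \eqref{eq:pde_eps} are screened by mollifiers, $F_\eps$ is a \emph{bounded} smooth map from $H^1(S^1)$ into $C^\infty(S^1)$, and the abstract ODE therefore has no smoothing mechanism: the representation $h^\eps(t)=h_0-\int_0^t F_\eps(h^\eps(s))\,ds$ shows that $h^\eps(t)-h_0$ is smooth, but $h^\eps(t)$ itself then has exactly the regularity of $h_0$, i.e.\ $H^1(S^1)$ and in general no better for any $t>0$. There is also nothing to bootstrap: the equation only determines $\partial_t h^\eps$, which is smooth no matter how rough $h^\eps(t,\cdot)$ is in $\theta$, so it yields no information on higher $\theta$-derivatives of $h^\eps$ itself. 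To obtain the stated $H^4$-regularity one must either assume $h_0\in H^4(S^1)$ or additionally regularise the initial datum (and then revisit the initial condition in Theorem \ref{thm:existence} (iii)). This is arguably a defect of the statement rather than of your method --- the subsequent a priori estimates can all be justified using only the smoothness of $\partial_t h^\eps$ and of the mollified quantities $\eta_\eps h^\eps$, by moving derivatives onto the smooth factors --- but as a proof of the theorem as stated, the sentence ``a standard bootstrap using the equation yields $h^\eps\in C((0,T_\eps);H^4(S^1))$'' is the gap.
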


%--------------------------------------------------------
%--------------------------------------------------------

Note that, if in Theorem \ref{thm:existence_eps} the initial value $h_0 \in H^1(S^1)$ satisfies $0 < \frac{\bar{h}_0}{2}\leq h_0 \in H^1(S^1)$, the continuity of the solution $h^\eps \in C\bigl([0,T_\eps]\times S^1\bigl)$ implies its positivity of $h^\eps$ for very small times $t > 0$. However, in general the solution $h^\eps$ does not necessarily remain positive on the whole time interval $[0,T_\eps]$ on existence, even if we require $h_0 > 0$ initially.

%--------------------------------------------------------
%--------------------------------------------------------

We now prove that the sequence $(h^\eps)_\eps$ has an accumulation point $h$ which is in turn a weak solution to the original problem \eqref{eq:pde}. To this end, note first that \eqref{eq:weak_solution_eps} may be rewritten equivalently as
\begin{equation*}
	\int_0^{T_\eps} \langle \partial_t h^\eps(t),\varphi(t)\rangle_{W^1_{\alpha+1}(S^1)}\, dt
	=
	\int_0^{T_\eps} \int_{S^1} 
	m_\eps(h^\eps)\, \psi_\eps\bigl(\eta_\eps\bigl(\de{}h\e{\eps}+\de{3}h\e{\eps}\bigr)\bigr)\,
	\partial_\theta(\eta_\eps \phi)\, d\theta\, dt
\end{equation*}
for all $\phi \in L_{\alpha+1}\bigl((0,T);W^1_{\alpha+1}(S^1)\bigr)$. We start by collecting some important properties of the solution $h^\eps$ to the regularised problem \eqref{eq:pde_eps}. To this end, we denote by $T_\eps$ be the maximal time of existence of the solution $h^\eps$ to \eqref{eq:pde_eps}. In general, $T_\eps$ depends on the parameter $\eps$.

\begin{remark} \label{rem:extension}
It is worthwhile to mention that if $h^\eps$ is defined in some space $C\bigl([0,\tau];H^1(S^1)\bigr)$ for some $\tau > 0$, then the solution can be extended to a larger time interval. In particular, this implies that $\tau < T_\eps$. We will use this result frequently in order to prove that the solutions $h^\eps$ obtained in Theorem \ref{thm:existence_eps} can be defined in some small time interval $[0,T]$, where $T > 0$ is independent of $\eps$.
\end{remark}

In the next lemma, we observe that solutions $h^\eps$ to \eqref{eq:pde_eps} conserve their mass. 

%--------------------------------------------------------
%--------------------------------------------------------

\begin{lemma} \label{lem:cons_mass}
	Let $h^\eps$ be the solution to the mollified problem \eqref{eq:pde_eps} on $[0,T_\eps)$, corresponding to the initial value $h_0$. Then $h^\eps$ conserves its mass in the sense that
	\begin{equation*}
	\left\|h^\eps(t)\right\|_{L_1(S^1)}
	=
	\|h^\eps_0\|_{L_1(S^1)},
	\quad
	t \in [0,T_\eps).
	\end{equation*}
\end{lemma}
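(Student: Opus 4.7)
The strategy is to exploit the divergence structure of \eqref{eq:pde_eps} together with the periodic boundary conditions, which immediately yield conservation of the zeroth Fourier mode of $h^\eps$. The signed integral $\int_{S^1} h^\eps(t,\theta)\, d\theta$ is time-independent, and under the strict positivity hypothesis $0 < C_0 \leq h_0$ this signed integral agrees with the $L_1$-norm.

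First I would observe that the flux
$F^\eps := \eta_\eps\bigl[m_\eps(h^\eps)\, \psi_\eps\bigl(\eta_\eps(\partial_\theta h^\eps + \partial_\theta^3 h^\eps)\bigr)\bigr]$ is $2\pi$-periodic in $\theta$, because the mollifier $\rho_\eps$ maps $2\pi$-periodic functions to $2\pi$-periodic functions. Consequently $\int_{S^1} \partial_\theta F^\eps(t,\theta)\, d\theta = 0$ for every $t \in [0,T_\eps)$. Using either the regularity $\partial_t h^\eps \in C\bigl([0,T_\eps);L_2(S^1)\bigr)$ provided by Theorem \ref{thm:existence_eps} to interchange $d/dt$ with the spatial integral, or equivalently testing the integral identity \eqref{eq:weak_solution_eps} against the admissible constant test function $\varphi \equiv 1 \in W^1_{\alpha+1}(S^1)$ (whose derivative vanishes so that the right-hand side is identically zero), I would obtain
\begin{equation*}
    \frac{d}{dt} \int_{S^1} h^\eps(t,\theta)\, d\theta = 0, \qquad t \in [0,T_\eps),
\end{equation*}
and integrating in time yields $\int_{S^1} h^\eps(t,\theta)\, d\theta = \int_{S^1} h_0(\theta)\, d\theta$ for all $t \in [0,T_\eps)$.

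To identify this signed integral with the $L_1$-norm, I would invoke the continuous embedding $H^1(S^1) \hookrightarrow C(S^1)$ together with the strict positivity $h_0 \geq C_0 > 0$. Continuity of $h^\eps$ on $[0,T_\eps) \times S^1$ ensures positivity is preserved at least on a right neighbourhood of $t=0$, and I would combine this with the extension criterion recorded in Remark \ref{rem:extension} (and the cutoff built into $m_\eps$ on the set $\{|s| \leq \bar h_0/2\}$) to propagate positivity throughout $[0,T_\eps)$. Once $h^\eps > 0$ pointwise, the equality $\|h^\eps(t)\|_{L_1(S^1)} = \int_{S^1} h^\eps(t,\theta)\, d\theta = \int_{S^1} h_0(\theta)\, d\theta = \|h_0\|_{L_1(S^1)}$ is immediate.

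I do not expect any substantial obstacle: this is a routine conservation-law argument based on the divergence form of \eqref{eq:pde_eps}, and the only point requiring a little care is the reduction from the signed mass to the $L_1$-norm, which ultimately relies on continuity together with the strict positivity of the initial data.
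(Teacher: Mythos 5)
Your core argument is exactly the paper's: the proof there is the one-line observation that testing the regularised equation with $\varphi\equiv 1$ and using periodicity kills the flux term, so $\int_{S^1}h^\eps(t,\theta)\,d\theta$ is constant in time. That part of your proposal is correct and complete.

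The place where you overreach is the final step, where you claim to propagate strict positivity of $h^\eps$ throughout the whole maximal interval $[0,T_\eps)$ using Remark \ref{rem:extension} and the cutoff built into $m_\eps$. This is not justified: the equation is a fourth-order degenerate parabolic equation with no comparison principle, the regularisation $m_\eps$ does not prevent $h^\eps$ from crossing zero, and the paper explicitly warns (in the remark following Theorem \ref{thm:existence_eps}) that ``in general the solution $h^\eps$ does not necessarily remain positive on the whole time interval $[0,T_\eps]$ of existence, even if we require $h_0>0$ initially.'' Positivity with an $\eps$-independent bound is only established later, on a possibly smaller interval $[0,T]$, via Lemma \ref{lem:bound_from_zero}, and only under the additional smallness assumption $\|h_0-\bar h_0\|_{H^1(S^1)}\leq\delta$. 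What the testing argument genuinely proves on all of $[0,T_\eps)$ is conservation of the signed integral $\int_{S^1}h^\eps\,d\theta$ (equivalently of the average $\bar h^\eps$), which is all the paper ever uses (e.g.\ $|\bar h^\eps(t)|=|\bar h_0|$ in Lemma \ref{lem:H^1}); the identification with the $L_1$-norm is a harmless abuse in the paper's statement that holds wherever positivity is separately known, but it cannot be derived for all of $[0,T_\eps)$ by the route you propose.
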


%--------------------------------------------------------
%--------------------------------------------------------

\begin{proof}
	This follows by testing the regularised evolution equation with $\phi=1$ and using the periodic boundary conditions.
\end{proof}

In addition to the conservation of mass property, solutions $h^\eps$ to the mollified problem dissipate energy, in the sense that the functional $E$ introduced above is decreasing along solutions. 

%--------------------------------------------------------
%--------------------------------------------------------

\begin{lemma}[Energy dissipation] \label{lem:dissipation}
Let $h^\eps$ be a solution to \eqref{eq:pde_eps} on $[0,T_\eps)$, emanating from an initial value $h_0 \in H^1(S^1)$. Then $h^\eps$ complies with the functional equation
\begin{equation} \label{eq:energy_equ}
	E[h^\eps](T) + 2 D^\eps_{T}[h^\eps] = E[h^\eps](0),
	\quad
	T \in [0,T_\eps),
\end{equation}
where the non-negative dissipation $D_{T}[h^\eps]$ is given by
\begin{equation*}
	D^\eps_{T}[h^\eps] 
	= 
	\int_0^{T} \int_{S^1} 
	m_\eps(h^\eps)\,
	\left(\bigl(\eta_\eps(\partial_\theta h^\eps + \partial_\theta^3 h^\eps\bigr)^2 + \eps^2\right)^\frac{\alpha-1}{2} \eta_\eps(\partial_\theta h^\eps + \partial_\theta^3 h^\eps\bigr)^2\, d\theta\, dt.
\end{equation*}
This, in particular, implies the a-priori estimate
\begin{equation} \label{eq:diss_est}
	D^\eps_T[h^\eps] 
	\leq 
	\|h_0\|_{H^1(S^1)} + \pi \left|\bar{h}_0\right|^2
	\leq
	C \|h_0\|_{H^1(S^1)}.
\end{equation}
\end{lemma}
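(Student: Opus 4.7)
The plan is to differentiate $E[h^\eps](t)$ along the flow, substitute the regularised equation \eqref{eq:pde_eps} for $\partial_t h^\eps$, perform two integrations by parts, and use the self-adjointness of the mollifier $\eta_\eps$ to land on a non-negative dissipation term. The a-priori estimate \eqref{eq:diss_est} then follows from a straightforward upper bound on $E[h_0]$ combined with a lower bound on $E[h^\eps](T)$ obtained via Poincar\'e--Wirtinger and conservation of mass.

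Concretely, the regularity provided by Theorem~\ref{thm:existence_eps}, namely $h^\eps \in C((0,T_\eps);H^4(S^1)) \cap C^1([0,T_\eps);L_2(S^1))$, permits one to compute for $t \in (0,T_\eps)$
\begin{equation*}
    \frac{d}{dt} E[h^\eps](t)
    = \int_{S^1} \bigl(\partial_\theta h^\eps \, \partial_\theta \partial_t h^\eps - h^\eps \, \partial_t h^\eps\bigr)\, d\theta
    = -\int_{S^1} \bigl(\partial_\theta^2 h^\eps + h^\eps\bigr) \partial_t h^\eps\, d\theta,
\end{equation*}
the boundary terms vanishing by periodicity. Substituting the regularised equation and integrating by parts once more yields
\begin{equation*}
    \frac{d}{dt} E[h^\eps](t)
    = -\int_{S^1} \bigl(\partial_\theta h^\eps + \partial_\theta^3 h^\eps\bigr)\, \eta_\eps\!\Bigl(m_\eps(h^\eps)\,\psi_\eps\bigl(\eta_\eps(\partial_\theta h^\eps + \partial_\theta^3 h^\eps)\bigr)\Bigr)\, d\theta.
\end{equation*}

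Next I would exploit the self-adjointness of $\eta_\eps$ in $L_2(S^1)$, which holds because $\rho$ can be (and is) chosen even: this transfers one copy of $\eta_\eps$ onto the left factor and produces
\begin{equation*}
    \frac{d}{dt} E[h^\eps](t)
    = -\int_{S^1} m_\eps(h^\eps)\, \psi_\eps\bigl(\eta_\eps(\partial_\theta h^\eps + \partial_\theta^3 h^\eps)\bigr)\, \eta_\eps(\partial_\theta h^\eps + \partial_\theta^3 h^\eps)\, d\theta.
\end{equation*}
Since $s\,\psi_\eps(s) = (s^2+\eps^2)^{(\alpha-1)/2} s^2 \geq 0$ and $m_\eps \geq 0$, the right-hand side is non-positive and its integrand is exactly the dissipation density appearing in $D^\eps_T[h^\eps]$. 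Integration in time from $0$ to $T$, combined with continuity of $E[h^\eps]$ up to $t=0$ (which follows from $h^\eps \in C([0,T_\eps);H^1(S^1))$), then yields the functional equation \eqref{eq:energy_equ}.

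For the a-priori bound \eqref{eq:diss_est} I would estimate $E[h_0] \leq \tfrac{1}{2}\|h_0\|_{H^1(S^1)}^2$ from above, and apply Poincar\'e--Wirtinger on $S^1$ in the form $\|v-\bar v\|_{L_2(S^1)} \leq \|\partial_\theta v\|_{L_2(S^1)}$ together with the conservation of mass $\bar h^\eps(t) = \bar h_0$ from Lemma~\ref{lem:cons_mass}, to obtain the lower bound $E[h^\eps](T) \geq -\pi\,\bar h_0^2$; combining both with \eqref{eq:energy_equ} produces the claimed estimate. The most delicate step is justifying the second integration by parts: for $\alpha \in (0,1)$ the derivative $\psi_\eps'$ is not bounded uniformly in $\eps$, but for each fixed $\eps > 0$ the function $\psi_\eps$ is smooth and the regularity $h^\eps \in C((0,T_\eps);H^4(S^1))$ together with the smoothness and boundedness of $m_\eps$ renders every integrand classical, while the symmetry step is nothing but Fubini's theorem applied to convolution with the even kernel $\rho_\eps$.
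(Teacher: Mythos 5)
Your proposal is correct and follows essentially the same route as the paper: part (i) is exactly the paper's ``test with $h^\eps+\partial_\theta^2 h^\eps$'' computation (with the self-adjointness of the even mollifier made explicit), and your Poincar\'e--Wirtinger lower bound $E[h^\eps](T)\geq -\pi\,\bar h_0^2$ is the same spectral-gap estimate the paper obtains via Plancherel. Note only that your (correct) computation yields $E[h^\eps](T)+D^\eps_T[h^\eps]=E[h^\eps](0)$ with coefficient $1$ rather than the coefficient $2$ appearing in \eqref{eq:energy_equ}; this is an inconsistency in the paper's statement (the coefficient-$1$ version is the one used later, e.g.\ in the proof of Lemma \ref{lem:limit_flux}), and it does not affect the a-priori estimate \eqref{eq:diss_est}.
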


%------------------------------------------------
%------------------------------------------------

\begin{proof}
(i)	That $h^\eps$ satisfies the functional equation \eqref{eq:energy_equ} follows by testing the equation with $(h^\eps + \partial_\theta^2 h^\eps)$.
	
(ii) In order to prove the a-priori estimate \eqref{eq:diss_est}, we use Fourier analysis and write
\begin{equation*}
	h^\eps(t,\theta) 
	=
	\sum_{n\in \Z} a_n(t) e^{i n \theta}
	=
	\bar{h}^\eps
	+
	\sum_{n\in\Z, n \neq 0} a_n(t) e^{i n \theta},
	\quad t \in [0,T_\eps),
\end{equation*}
where the Fourier coefficients $a_n(t),\ n \in \Z$, are, for $t \in [0,T_\eps)$, given by
\begin{equation*}
	a_n(t) =
	\frac{1}{2\pi} \int_{S^1} h^\eps(t,\theta) e^{-i n \theta}\, d\theta
	\quad
	\text{and}
	\quad
	a_0(t)
	=
	\frac{1}{2\pi} \int_{S^1} h^\eps(t,\theta)\, d\theta
	=
	\bar{h}^\eps.
\end{equation*}
%This implies that
%\begin{equation*}
%	\de{}h\e{\eps}(t,\theta) 
%% 	=
%% 	\sum_{n=-\infty}^\infty (in) a_n(t) e^{i n \theta}
%	=
%	\sum_{n=-\infty, n \neq 0}^\infty \underbrace{(in) a_n(t)}_{=: b_n(t)} e^{i n \theta}, \quad t \in [0,T].
%\end{equation*}
Using Plancherel's theorem, i.e. the identity
\begin{equation*}
	\sum_{n\in \Z} |a_n(t)|^2 
	=
	\frac{1}{2\pi} \left\|\sum_{n\in\Z} a_n(t) e^{i n \theta}\right\|_{L_2(S^1)}^2,
	\quad t \in [0,T_\eps),
\end{equation*}
we obtain 
\begin{align*}
	2 E[h^\eps](t)
	&=
	\int_{S^1} \left(\left|\de{}h\e{\eps}(t)\right|^2 - \left|h^\eps(t)\right|^2\right)\, d\theta
	\\
	&=
	\left\|\de{}h\e{\eps}(t)\right\|_{L_2(S^1)}^2 - \left\|h^\eps(t)\right\|_{L_2(S^1)}^2
	\\
%	&=
%	2\pi \sum_{n=-\infty, n \neq 0}^\infty |b_n(t)|^2 
%	- 
%	2\pi \sum_{n=-\infty}^\infty |a_n(t)|^2 
%	\\
% 	&=
% 	2\pi \sum_{n=-\infty, n \neq 0}^\infty n^2 |a_n(t)|^2 
% 	-
% 	2\pi \sum_{n=-\infty, n \neq 0}^\infty |a_n(t)|^2 
% 	-
% 	2\pi |a_0(t)|^2
% 	\\
	&=
	2\pi 
	\left(\sum_{n\in\Z, n \neq 0} (n^2-1) |a_n(t)|^2
	-
	\left|\bar{h}_0\right|^2\right),
	\quad t \in [0,T_\eps),
\end{align*}
and consequently, we end up with
\begin{equation*}
	E[h^\eps](t) + \pi |\bar{h}_0|^2
	=
	\pi 
	\sum_{n\in\Z, n \neq 0} (n^2-1) |a_n(t)|^2
	\geq 
	0, \quad t \in [0,T_\eps).
\end{equation*}
This yields the desired estimate and the proof is complete.
\end{proof}

%--------------------------------------------------------
%--------------------------------------------------------
\medskip

By means of the energy balance \eqref{eq:energy_equ} we can now derive a uniform (in $\eps$) $L_\infty\bigl([0,T];H^1(S^1)\bigr)$  estimate for $h^\eps$.
It is worthwhile to mention that, although the energy estimate \eqref{eq:energy_equ} is the natural estimate for the equation \eqref{eq:pde}, it does not provide any information on the Fourier modes $a_{\pm 1}$.

Throughout the paper we frequently use the following elementary inequality. Given $\eps > 0$ and $\alpha \in (0,1)$ it holds that
\begin{equation}\label{eq:elementary_inequ}
	\bigl(|x|^2 + \eps^2\bigr)^\frac{\alpha-1}{2} |x|
	=
	\bigl(|x|^2 + \eps^2\bigr)^\frac{\alpha(\alpha-1)}{2(\alpha+1)} |x|^\frac{2\alpha}{\alpha+1} 
	\frac{|x|^\frac{1-\alpha}{\alpha+1}}{\bigl(|x|^2 + \eps^2\bigr)^\frac{1-\alpha}{2(\alpha+1)}}
	\leq
	\bigl(|x|^2 + \eps^2\bigr)^\frac{\alpha(\alpha-1)}{2(\alpha+1)} |x|^\frac{2\alpha}{\alpha+1},
	\quad x \in \R.
\end{equation}

%--------------------------------------------------------
%--------------------------------------------------------

\begin{lemma}\label{lem:H^1}
Let $\eps \in (0,1)$ be fixed and  let $h^\eps$ be the solution to \eqref{eq:pde_eps} on $[0,T_\eps)$, emanating from the initial value $h^\eps_0 \in H^1(S^1)$. Then there exist a positive time $T > 0$ and a positive constant $C > 0$, both independent of $\eps$, such that
\begin{equation*}
	\left\|h^\eps\right\|_{L_\infty((0,T);H^1(S^1))}
	\leq
	C.
\end{equation*}
\end{lemma}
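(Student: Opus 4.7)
The plan is to reduce the $H^1$-bound to the control of a single Fourier coefficient and then close that control by integrating its evolution equation over a sufficiently short time. Writing $h^\eps(t,\theta)=\sum_{n\in\Z}a_n(t)e^{in\theta}$ with $a_n=\overline{a_{-n}}$, the identity $E[h^\eps](t)+\pi|\bar h_0|^2=\pi\sum_{n\neq 0}(n^2-1)|a_n(t)|^2$ established in the proof of Lemma~\ref{lem:dissipation}, together with the monotonicity $E[h^\eps](t)\leq E[h_0]$ coming from \eqref{eq:energy_equ} and the elementary inequality $n^2\leq \tfrac{4}{3}(n^2-1)$ for $|n|\geq 2$, yields a bound on $\sum_{|n|\geq 2}(1+n^2)|a_n(t)|^2$ uniform in $\eps$ and $t$. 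Since $a_0(t)\equiv\bar h_0$ by Lemma~\ref{lem:cons_mass} and $|a_{-1}|=|a_1|$ (as $h^\eps$ is real), Plancherel will reduce the target bound to
\begin{equation*}
	\|h^\eps(t)\|_{H^1(S^1)}^2\leq C_0(\|h_0\|_{H^1})+8\pi|a_1(t)|^2.
\end{equation*}

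Next I will control $|a_1(t)|$ by integrating its ODE. Testing \eqref{eq:pde_eps} against $e^{-i\theta}$ and integrating by parts gives
\begin{equation*}
    |\dot a_1(t)| \leq \tfrac{1}{2\pi}\int_{S^1} m_\eps(h^\eps)|\psi_\eps(\eta_\eps f)|\,d\theta,\qquad f:=\partial_\theta h^\eps+\partial_\theta^3 h^\eps,
\end{equation*}
using $\|\rho_\eps\|_{L_1}=1$. The pointwise bounds $|\psi_\eps(x)|\leq|x|^\alpha$ for $\alpha\in(0,1)$ and $|\psi_\eps(x)|\leq C_\alpha(|x|^\alpha+\eps^{\alpha-1}|x|)$ for $\alpha\geq 1$ (both following from $(x^2+\eps^2)^{1/2}\leq|x|+\eps$), combined with H\"older's inequality in $\theta$ with conjugate exponents $\alpha+1$ and $(\alpha+1)/\alpha$, then deliver
\begin{equation*}
	\int_{S^1} m_\eps(h^\eps)|\eta_\eps f|^\alpha\,d\theta \leq \|m_\eps(h^\eps)\|_{L_1}^{1/(\alpha+1)}\,I_\eps(t)^{\alpha/(\alpha+1)},\qquad I_\eps(t):=\int_{S^1} m_\eps(h^\eps)|\eta_\eps f|^{\alpha+1}\,d\theta,
\end{equation*}
and an analogous estimate for $\int m_\eps(h^\eps)|\eta_\eps f|\,d\theta$. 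The quantity $I_\eps(t)$ is in turn controlled by the dissipation integrand: for $\alpha\geq 1$ the inequality $(x^2+\eps^2)^{(\alpha-1)/2}\geq|x|^{\alpha-1}$ directly gives $I_\eps\leq D_\eps$, while for $\alpha\in(0,1)$ splitting the integration domain at $|\eta_\eps f|=\eps$ yields $I_\eps\leq CD_\eps+C\eps^{\alpha+1}\|h^\eps\|_{L_\infty}^{\alpha+2}$, with the extra term harmless since $\eps\in(0,1)$. Using $\|m_\eps(h^\eps)\|_{L_1}\leq 2\pi\|h^\eps\|_{L_\infty}^{\alpha+2}$, the dissipation bound \eqref{eq:diss_est}, and H\"older in time with the same exponents, I will arrive at
\begin{equation*}
	|a_1(t)-a_1(0)| \leq C_\alpha\Bigl(\sup_{s\leq t}\|h^\eps(s)\|_{L_\infty}\Bigr)^{\frac{\alpha+2}{\alpha+1}} T^{\frac{1}{\alpha+1}}\Bigl(\|h_0\|_{H^1}+T\sup_{s\leq t}\|h^\eps(s)\|_{L_\infty}^{\alpha+2}\Bigr)^{\frac{\alpha}{\alpha+1}}
\end{equation*}
for $t\leq T\leq 1$, with an analogous (and no worse) correction present when $\alpha\geq 1$ since then $\eps^{\alpha-1}\leq 1$.

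The argument closes by a continuity/bootstrap. Set $M_\eps(t):=\sup_{s\in[0,t]}\|h^\eps(s)\|_{H^1(S^1)}$, which is continuous by Theorem~\ref{thm:existence_eps}, and use the one-dimensional Sobolev embedding $H^1(S^1)\embed L_\infty(S^1)$ to replace $\|h^\eps\|_{L_\infty}$ by a multiple of $M_\eps$. Combining everything above produces
\begin{equation*}
	M_\eps(t)^2 \leq C_1 + C_2\,M_\eps(t)^{\frac{2(\alpha+2)}{\alpha+1}}\,T^{\frac{2}{\alpha+1}}\bigl(1+M_\eps(t)^{\alpha+2}T\bigr)^{\frac{2\alpha}{\alpha+1}}
\end{equation*}
with $C_1,C_2>0$ depending only on $h_0$ and $\alpha$. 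Since $2(\alpha+2)/(\alpha+1)-2=2/(\alpha+1)>0$, fixing $\bar M:=2\sqrt{C_1}$ and then $T\in(0,1)$ small enough (independently of $\eps$) that the right-hand side at $M_\eps=\bar M$ is strictly below $\bar M^2$, the continuity of $M_\eps$ combined with $M_\eps(0)=\|h_0\|_{H^1}\leq\bar M/2$ will force $M_\eps(t)\leq\bar M$ for all $t\in[0,T]$ and all $\eps\in(0,1)$. Remark~\ref{rem:extension} then guarantees $T_\eps>T$, giving the claim.

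The main obstacle is that $E$ vanishes on the Fourier modes $n=\pm 1$, which is the spectral manifestation of the translation invariance of circular profiles; the energy identity alone therefore cannot close an $H^1$-bound. The delicate part is the H\"older chain in the second paragraph, which must couple $|\psi_\eps(\eta_\eps f)|$ to the dissipation integrand uniformly in $\alpha>0$ and $\eps\in(0,1)$.
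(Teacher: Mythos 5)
Your proposal is correct, and it rests on the same core mechanism as the paper's proof: the energy $E$ controls all Fourier modes except $n=0,\pm 1$, the zero mode is fixed by conservation of mass (Lemma \ref{lem:cons_mass}), and the modes $a_{\pm 1}$ are controlled by testing \eqref{eq:pde_eps} with $e^{\mp i\theta}$ and pairing the resulting flux against the dissipation via H\"older. Where you genuinely deviate is in how the estimate is closed. The paper splits the flux term with Young's inequality, absorbs a $\delta$-fraction of the dissipation into the energy identity, and ends with the integral inequality $\|h^\eps(t)\|_{H^1(S^1)}^2\leq C_1\|h_0\|_{H^1(S^1)}^2+C_2\int_0^t\|h^\eps(s)\|_{H^1(S^1)}^\gamma\,ds$ closed by a (nonlinear) Gronwall argument. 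You instead exploit that the total dissipation is bounded a priori by \eqref{eq:diss_est} uniformly in the endpoint time, apply H\"older in time to gain the small factor $T^{1/(\alpha+1)}$, and close with a continuity/bootstrap argument on $M_\eps(t)=\sup_{s\leq t}\|h^\eps(s)\|_{H^1(S^1)}$; Remark \ref{rem:extension} then gives $T_\eps>T$ exactly as in the paper. Both routes produce a time $T$ independent of $\eps$; yours trades the Gronwall step for a standard bootstrap, and it has the additional merit of treating $\alpha\geq 1$ explicitly through the pointwise bound $|\psi_\eps(x)|\leq C_\alpha\bigl(|x|^\alpha+\eps^{\alpha-1}|x|\bigr)$, whereas the paper's elementary inequality \eqref{eq:elementary_inequ} is only stated for $\alpha\in(0,1)$. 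When writing it up, make two small points explicit: enlarge $C_1$ so that $\|h_0\|_{H^1(S^1)}\leq\bar M/2$ holds, and note that the H\"older-in-time step is applied on $[0,\min(T,T_\eps))$ where \eqref{eq:diss_est} is valid independently of the endpoint; both are immediate.
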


%--------------------------------------------------------
\medskip

The main issue in the proof of this lemma is to observe that the $H^1(S^1)$-norm of $h^\eps$ is equivalent to the sum of $E[h^\eps]$ and the low Fourier modes. In virtue of the Lemma \ref{lem:cons_mass} and Lemma \ref{lem:dissipation}, we thus need to derive estimates for the Fourier modes $n=0, \pm 1$.

%--------------------------------------------------------

\begin{proof}
As in the proof of Lemma \ref{lem:dissipation} we use the Fourier series representation of $h^\eps$ to obtain the equation
\begin{equation*}
	E[h^\eps](t)
	=
	\left(
	\left\|\de{}h\e{\eps}(t)\right\|_{L_2(S^1)}^2 - \left\|h^\eps(t)\right\|_{L_2(S^1)}^2
	\right)
	=
	- \pi \left|\bar{h}_0(t)\right|^2 + \pi \sum_{n\in\Z, n \neq 0} (n^2-1) |a_n(t)|^2
\end{equation*}
for every $t \in [0,T_\eps)$.
	
Writing $\left\|h^\eps(t)\right\|_{H^1(S^1)}$ in terms of the Fourier series of $h^\eps(t)$ yields 
\begin{equation} \label{eq:H1_est_A_1}
	\begin{split}
	\left\|h^\eps(t)\right\|_{H^1(S^1)}^2
	&=
	\left\|h^\eps(t)\right\|_{L_2(S^1)}^2 + \left\|\de{}h\e{\eps}(t)\right\|_{L_2(S^1)}^2
	\\
	&=
	2 \pi \sum_{n\in\Z} (n^2 + 1) |a_n(t)|^2
	\\
	&\leq
	C_1 \left|\bar{h}_0\right|^2 + C_2 \left(|a_1(t)|^2 + |a_{-1}(t)|^2\right) + C_3 E[h^\eps](t)
	\end{split}
\end{equation}
for all $t \in (0,T_\eps)$.
Thus, to get an estimate for $\left\|h^\eps(t)\right\|_{H^1(S^1)}$, we need estimates for the first Fourier modes $a_{-1}(t)$ and $a_1(t)$. Since $h^\eps$ is a real-valued function, we have
$a_{-1}(t) = \overline{a_1(t)}$, where the bar indicates complex conjugation. Hence, it is enough to estimate $a_1(t)$. To this end, recall that $a_1$ is given by
\begin{equation*}
	a_1(t) = \frac{1}{2\pi} \int_{S^1} h^\eps(t,\theta)\, e^{-i \theta}\, d\theta.
\end{equation*}
This immediately implies the estimate $|a_1(t)| \leq C \left\|h^\eps(t)\right\|_{L_\infty(S^1)} \leq C \left\|h^\eps(t)\right\|_{H^1(S^1)},\, t\in (0,T_\eps)$. Moreover, 
\begin{align*}
	\frac{d}{dt} a_1(t)
	&=
	\frac{1}{2\pi} \int_{S^1} \partial_t h^\eps(t,\theta)\, e^{-i \theta}\, d\theta
	\\
	&=
	\frac{1}{2\pi} \int_{S^1} 
	m_\eps(h^\eps)\,
	\psi_{\eps}\bigl(\eta_\eps\bigl(\partial_\theta h^\eps + \partial_\theta^3 h^\eps\bigr)\bigr)\, \partial_\theta\bigl(\eta_\eps e^{-i \theta}\bigr)\, d\theta
	\\
	&=
	-\frac{i}{2\pi} \int_{S^1} 
	m_\eps(h^\eps)\,
	\psi_{\eps}\bigl(\eta_\eps\bigl(\partial_\theta h^\eps + \partial_\theta^3 h^\eps\bigr)\bigr)\,\bigl(\eta_\eps e^{-i \theta}\bigr)\, d\theta.
\end{align*} 
With the elementary inequality \eqref{eq:elementary_inequ} and in view of H\"older's inequality with exponents $p=(\alpha+1)/\alpha$ and $q=\alpha+1$, we deduce the estimate
\begin{align*}
	&
	\left|\frac{d}{dt} a_1(t)\right|
	\leq
	C \int_{S^1} m_\eps(h^\eps)\,
	\left|\bigl(\eta_\eps\bigl(\partial_\theta h^\eps + \partial_\theta^3 h^\eps\bigr)\bigr)^2 + \eps^2\right|^\frac{\alpha(\alpha-1)}{2(\alpha+1)}\,
	\left|\eta_\eps\bigl(\partial_\theta h^\eps + \partial_\theta^3 h^\eps\bigr)\right|^\frac{2\alpha}{\alpha+1}
	\left|\eta_\eps e^{-i \theta}\right|\, d\theta
	\\
	&\leq
	C
	\left(\int_{S^1} m_\eps(h^\eps)\, \left|\bigl(\eta_\eps\bigl(\partial_\theta h^\eps + \partial_\theta^3 h^\eps\bigr)\bigr)^2 + \eps^2\right|^\frac{\alpha-1}{2}\,
	\left|\eta_\eps\bigl(\partial_\theta h^\eps + \partial_\theta^3 h^\eps\bigr)\right|^2
	d\theta\right)^\frac{\alpha}{\alpha+1}
	\left(\int_{S^1} m_\eps(h^\eps)\, \left|\eta_\eps e^{-i \theta}\right|^{\alpha+1}\right)^\frac{1}{\alpha+1}
	\\
	&\leq
	C\, \left\|h^\eps\right\|_{L_\infty(S^1)}^\frac{\alpha+2}{\alpha+1}
	\left(\int_{S^1} m_\eps(h^\eps)\, \left|\bigl(\eta_\eps\bigl(\partial_\theta h^\eps + \partial_\theta^3 h^\eps\bigr)\bigr)^2 + \eps^2\right|^\frac{\alpha-1}{2}\,
	\left|\eta_\eps\bigl(\partial_\theta h^\eps + \partial_\theta^3 h^\eps\bigr)\right|^2
	d\theta\right)^\frac{\alpha}{\alpha+1}
\end{align*}
for all $t \in (0,T_\eps)$.
Consequently, for the derivative with respect to time of $|a_1(t)|^2$ we find that
\begin{equation*}
\begin{split}
	\left|\frac{d}{dt} \left|a_1(t)\right|^2\right|
	&\leq
	2\, \left|a_1(t)\right|\cdot \left|\frac{d}{dt} a_1(t)\right|
	\\
	&\leq
	C\, \left\|h^\eps(t)\right\|_{H^1(S^1)}^{\beta+1} 
	\left(\int_{S^1} m_\eps(h^\eps)\, \left|\bigl(\eta_\eps\bigl(\partial_\theta h^\eps + \partial_\theta^3 h^\eps\bigr)\bigr)^2 + \eps^2\right|^\frac{\alpha-1}{2}\,
	\left|\eta_\eps\bigl(\partial_\theta h^\eps + \partial_\theta^3 h^\eps\bigr)\right|^2
	d\theta\right)^\frac{\alpha}{\alpha+1}
\end{split}
\end{equation*}
for all $t \in (0,T_\eps)$, with $\beta = \frac{\alpha+2}{\alpha+1} > 1$. Due to Young's inequality with exponents $p = \frac{\alpha+1}{\alpha}$ and $q=\alpha+1$ we find that, for all $t \in (0,T_\eps)$,
\begin{align*}
	\left|\frac{d}{dt} |a_1(t)|^2\right|
	&\leq
	\delta\,
	\int_{S^1} m_\eps(h^\eps)\, \left|\bigl(\eta_\eps\bigl(\partial_\theta h^\eps + \partial_\theta^3 h^\eps\bigr)\bigr)^2 + \eps^2\right|^\frac{\alpha-1}{2}\,
	\left|\eta_\eps\bigl(\partial_\theta h^\eps + \partial_\theta^3 h^\eps\bigr)\right|^2
	d\theta
	+
	C_{\delta,\alpha} \left\|h^\eps(t)\right\|_{H^1(S^1)}^\gamma
\end{align*}
for arbitrarily small $\delta > 0$, a constant $C_{\delta,\alpha} > 0$ that depends on $\delta$ and $\alpha$ and with $\gamma = (\alpha+2)+(\alpha+1) > 1$. Integration with respect to time yields
\begin{equation*}
	|a_1(t)|^2 
	\leq
	\delta\, D^\eps_{t}[h^\eps] + C_{\delta,\alpha}\, \int_0^t \left\|h^\eps(s)\right\|_{H^1(S^1)}^\gamma\, ds
	+
	|a_1(0)|^2,
	\quad t \in (0,T_\eps).
\end{equation*}
The estimate for $|a_{-1}(t)|^2 = |a_1(t)|^2$ is the same. In addition, Lemma \ref{lem:cons_mass} implies $|\bar{h}^\eps(t)| = |\bar{h}^\eps_0|$.
Inserting this into \eqref{eq:H1_est_A_1} yields for all $t \in (0,T_\eps)$
\begin{align*}
	\left\|h^\eps(t)\right\|_{H^1(S^1)}^2
	&\leq
	C_1 \left|\bar{h}_0\right|^2 + C_2 \left(|a_1(t)|^2 + |a_{-1}(t)|^2\right) 
	+ C_3 E[h^\eps](t)
	\\
	&\leq
	C \bigl(\left|\bar{h}_0\right|^2 + |a_1(0)|^2 + |a_{-1}(0)|^2\bigr)
	+
	\tilde{\delta} D^\eps_t[h^\eps] 
	+
	\tilde{C}_{\delta,\alpha} \int_0^t \left\|h^\eps(s)\right\|_{H^1(S^1)}^\gamma\, ds
	\\
	&\quad
	+
	C_3 E[h^\eps](t)
	\\
	&\leq
	C \bigl(\left|\bar{h}_0\right|^2 + |a_1(0)|^2 + |a_{-1}(0)|^2\bigr)
	+
	\tilde{\delta} D^\eps_t[h^\eps] 
	+
	\tilde{C}_{\delta,\alpha} \int_0^t \left\|h^\eps(s)\right\|_{H^1(S^1)}^\gamma\, ds
	\\
	&\quad
	+
	C_3 \bigl(E[h^\eps](0) - D^\eps_t[h^\eps]\bigr)
	\\
	& 
	\leq 
	C \bigl(\left|\bar{h}_0\right|^2 + |a_1(0)|^2 + |a_{-1}(0)|^2 + E[h^\eps](0)\bigr)
	+
	\tilde{C}_{\delta,\alpha} \int_0^t \left\|h^\eps(s)\right\|_{H^1(S^1)}^\gamma\, ds
	\\
	&\quad
	-(C_3 - \tilde{\delta}) D^\eps_t[h^\eps]
	\\
	&\leq
	C\, \|h^\eps_0\|_{H^1(S^1)}^2 - (C_3 - \tilde{\delta}) D^\eps_t[h^\eps]
	+
	\tilde{C}_{\delta,\alpha} \int_0^t \left\|h^\eps(s)\right\|_{H^1(S^1)}^\gamma\, ds.
\end{align*}
Choosing $\tilde{\delta}$ small enough, such that $C_3 - \tilde{\delta} = \frac{1}{2}$, we obtain
\begin{equation*}
	\left\|h^\eps(t)\right\|_{H^1(S^1)}^2
	+
	\frac{1}{2} D^\eps_t[h^\eps]
	\leq
	C_1\, \|h_0\|_{H^1(S^1)}^2 
	+
	C_2 \int_0^t \left\|h^\eps(s)\right\|_{H^1(S^1)}^\gamma\, ds,
	\quad t \in (0,T_\eps),
\end{equation*}
with $\gamma = (\alpha+2)+(\alpha+1) > 2$.
Thus, in view of Lemma \ref{lem:dissipation}, we have derived the estimate
\begin{equation*}
	\left\|h^\eps(t)\right\|_{H^1(S^1)}^2
	\leq
	C_1\, \|h_0\|_{H^1(S^1)}^2 
	+
	C_2 \int_0^t \left\|h^\eps(s)\right\|_{H^1(S^1)}^\gamma\, ds,
\end{equation*}
for $t \in (0,T_\eps)$. Finally, using a Gronwall type argument, we find that there exists a time $T > 0$, independent of $\eps$, such that
\begin{equation*}
	\left\|h^\eps(t)\right\|_{H^1(S^1)}^2
	\leq C_{T,h_0},
	\quad t \in (0,T).
\end{equation*}
In addition, we have $h^\eps \in C\bigl([0,T];H^1(S^1)\bigr)$ due to the regularity of the divergence-term in the regularised equation \eqref{eq:pde_eps}. Thus, it follows that $T \leq T_\eps$ (cf. Remark \ref{rem:extension}).
This completes the proof.
\end{proof}

%--------------------------------------------------------
%--------------------------------------------------------
\medskip

Our next goal is to prove that solutions $h^\eps$ of \eqref{eq:pde_eps}, that emerge from a positive initial film height, do not immediately drop to zero. For this purpose we need the following auxiliary result.
%--------------------------------------------------------
%--------------------------------------------------------
\medskip

\begin{lemma}\label{lem:interpolation_est}
Let $p > 1$ and $\psi \in W^2_p(S^1)$. Then for all $\delta > 0$ there exists a constant $C_\delta > 0$, such that the estimate
\begin{equation*} 
	\|\de{}\psi\|_{L_p(S^1)}
	\leq
	\delta \|\de{2}\psi\|_{L_p(S^1)}
	+
	C_\delta \|\psi\|_{L_2(S^1)}
\end{equation*}
holds true.
\end{lemma}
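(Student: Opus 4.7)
The estimate is a standard Ehrling-type interpolation inequality on the compact manifold $S^1$, so I would prove it by a compactness/contradiction argument built on the Rellich--Kondrachov embedding $W^2_p(S^1) \Subset W^1_p(S^1)$. The only mild subtlety is that $\psi$ need not have zero mean, so I must first split off the constant part in order to use the Poincar\'e inequality for the non-constant part.

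\textbf{Step 1 (Contradiction setup).} Suppose the claimed inequality fails. Then there exist $\delta_0 > 0$ and a sequence $(\psi_n) \subset W^2_p(S^1)$ such that
\begin{equation*}
    \|\partial_\theta \psi_n\|_{L_p(S^1)} > \delta_0 \|\partial_\theta^2 \psi_n\|_{L_p(S^1)} + n \|\psi_n\|_{L_2(S^1)}.
\end{equation*}
By homogeneity I may normalize $\|\partial_\theta \psi_n\|_{L_p(S^1)} = 1$, and then
\begin{equation*}
    \|\partial_\theta^2 \psi_n\|_{L_p(S^1)} < \delta_0^{-1}, \qquad \|\psi_n\|_{L_2(S^1)} < 1/n \longrightarrow 0.
\end{equation*}

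\textbf{Step 2 (Compactness).} Decompose $\psi_n = \bar\psi_n + \tilde\psi_n$ with $\bar\psi_n := \frac{1}{2\pi}\int_{S^1}\psi_n\, d\theta$ and $\int_{S^1}\tilde\psi_n\, d\theta = 0$. The Poincar\'e inequality on $S^1$ applied to the zero-mean function $\tilde\psi_n$ gives $\|\tilde\psi_n\|_{L_p(S^1)} \leq C\|\partial_\theta \tilde\psi_n\|_{L_p(S^1)} = C$, so $(\tilde\psi_n)$ is bounded in $W^2_p(S^1)$. By Rellich--Kondrachov, $W^2_p(S^1) \Subset W^1_p(S^1)$, and since $W^1_p(S^1) \hookrightarrow C(S^1) \hookrightarrow L_2(S^1)$ for $p > 1$, a subsequence (not relabelled) satisfies $\tilde\psi_n \to \tilde\psi$ in $W^1_p(S^1)$ and in $L_2(S^1)$ for some $\tilde\psi \in W^1_p(S^1)$.

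\textbf{Step 3 (Identifying the limit and deriving a contradiction).} Since $\psi_n \to 0$ in $L_2(S^1)$ and $\psi_n = \bar\psi_n + \tilde\psi_n$ with $\bar\psi_n$ constant and $\tilde\psi_n$ of zero mean, the orthogonal decomposition
\begin{equation*}
    \|\psi_n\|_{L_2(S^1)}^2 = 2\pi \,|\bar\psi_n|^2 + \|\tilde\psi_n\|_{L_2(S^1)}^2
\end{equation*}
forces $\bar\psi_n \to 0$ and $\tilde\psi_n \to 0$ in $L_2(S^1)$. By uniqueness of limits, $\tilde\psi = 0$, so $\tilde\psi_n \to 0$ in $W^1_p(S^1)$, i.e.\ $\|\partial_\theta \tilde\psi_n\|_{L_p(S^1)} \to 0$. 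But $\partial_\theta\psi_n = \partial_\theta \tilde\psi_n$, so $\|\partial_\theta\psi_n\|_{L_p(S^1)} \to 0$, contradicting the normalization $\|\partial_\theta\psi_n\|_{L_p(S^1)} = 1$.

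\textbf{Main obstacle.} There is no serious obstruction: the only point requiring care is that $W^2_p(S^1)$ is not embedded in $L_2(S^1)$ through a Poincar\'e-type inequality involving only $\partial_\theta^2 \psi$, so one cannot hope for a constant-free version of the estimate; this is exactly why the $\|\psi\|_{L_2(S^1)}$ term on the right-hand side (which controls the constant mode) is unavoidable. Splitting off $\bar\psi_n$ at the outset and applying Rellich--Kondrachov to $\tilde\psi_n$ handles this cleanly.
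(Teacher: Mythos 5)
Your argument is correct, but it is genuinely different from the one in the paper. You prove the inequality as a soft Ehrling-type statement by contradiction: normalise $\|\partial_\theta\psi_n\|_{L_p(S^1)}=1$, split off the mean, use Poincar\'e on the zero-mean part to get boundedness in $W^2_p(S^1)$, pass to a strongly convergent subsequence in $W^1_p(S^1)$ via Rellich--Kondrachov, and identify the limit as zero from the hypothesis $\|\psi_n\|_{L_2(S^1)}\to 0$ together with the $L_2$-orthogonality of constants and zero-mean functions; the steps all check out (in particular the embedding $W^1_p(S^1)\hookrightarrow C(S^1)\hookrightarrow L_2(S^1)$ for $p>1$ is what lets you compare the two limits, and homogeneity justifies the normalisation). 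The paper instead gives a direct, constructive proof by duality: for $\phi\in L_q(S^1)$ it writes $\phi-\eta_\eps\ast\phi=\partial_\theta G_\eps$ with $\|G_\eps\|_{L_\infty(S^1)}\leq C\eps^{1/p}\|\phi\|_{L_q(S^1)}$, integrates by parts to bound $\int_{S^1}\phi\,\partial_\theta\psi\,d\theta$ by $\|G_\eps\|_{L_q}\|\partial_\theta^2\psi\|_{L_p}+\|\partial_\theta(\eta_\eps\ast\phi)\|_{L_2}\|\psi\|_{L_2}$, and then chooses $\phi=(\partial_\theta\psi)^{p/q}$; the small factor in front of $\|\partial_\theta^2\psi\|_{L_p}$ comes from letting $\eps\to 0$. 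The trade-off is the usual one: your compactness route is shorter and avoids the mollifier bookkeeping, but yields no information about $C_\delta$, whereas the paper's argument produces an explicit constant $C_\delta\sim C_\eps$ with $\delta\sim\eps^{1/p}$, which is in the spirit of the quantitative estimates used elsewhere in Section~\ref{sec:local_existence}. Either proof establishes the lemma.
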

%--------------------------------------------------------
%--------------------------------------------------------

\begin{proof}
Let $\phi \in L_q(S^1)$ with $q > 1$ such that $\tfrac{1}{p}+\frac{1}{q}=1$. As usual, we identify $S^1$ with the interval $[0,2\pi]$ and we can identity functions $\phi \in L_q(S^1)$ with functions $\phi \in L_{q,\text{loc}}(\R)$ which are periodic with period $2\pi$. Let $\eta_\eps$ be a standard mollifier, i.e. let
\begin{equation*}
	\eta_\eps(\theta) = \frac{1}{\eps} \eta\bigl(\tfrac{\theta}{\eps}\bigr),
	\quad \text{where} \quad 
	\eta \in C_c^\infty(\R)
	\quad \text{with} \quad
	\text{supp}(\eta) \subset [-1,1]
	\quad \text{and} \quad 
	\int_{\R} \eta(\theta)\, d\theta = 1.
\end{equation*}
Moreover, we require that
\begin{equation*} 
	\eta(\theta) = \eta(-\theta)
	\quad \text{and} \quad 
	\eta \geq 0.
\end{equation*}
Since the argument of $\eta_\eps$ might be negative in some of the following calculations, we extend the function $\eta_\eps$ periodically by assuming that $\eta_\eps(\theta) = \eta_\eps(\theta+2\pi)$.
We define $\phi_\eps = \eta_\eps \ast \phi$, where $\ast$ denotes convolution. Then we have $\phi_\eps \in C^\infty(S^1)$ and
\begin{equation*}
	\phi - \phi_\eps = \partial_\theta G_\eps,\quad \text{where} \quad 
	G_\eps(\theta) = \int_0^\theta \bigl(\phi(s) - \phi_\eps(s)\bigr)\, ds.
\end{equation*}
Moreover, since $\int_{S^1} (\phi - \phi_\eps)\, d\theta = 0$, it follows that $G_\eps \in W^1_q(S^1)$ and we may use integration by parts to obtain, for every $\psi \in W^2_p(S^1)$, the equation
\begin{equation*}
		\int_{S^1} \phi\, \partial_\theta\psi\, d\theta
		=
		\int_{S^1} \bigl(\phi - \phi_\eps)\, \partial_\theta \psi\, d\theta
		+
		\int_{S^1} \phi_\eps\, \partial_\theta \psi\, d\theta
		=
		- \int_{S^1} G_\eps \partial_\theta^2 \psi\, d\theta
		- \int_{S^1} \partial_\theta \phi_\eps\, \psi\, d\theta.
\end{equation*}
In view of H\"older's inequality, this allows us to derive the estimate
\begin{equation} \label{eq:Holder_aux}
	\begin{split}
		\left|\int_{S^1} \phi\, \partial_\theta\psi\, d\theta\right|
		&\leq
%		\left(\int_{S^1} |G_\eps|^q\, d\theta\right)^\frac{1}{q}
%		\left(\int_{S^1} |\partial_\theta^2 \psi|^p\, d\theta\right)^\frac{1}{p}
%		+
%		\left(\int_{S^1} |\partial_\theta \phi_\eps|^2\, d\theta\right)^\frac{1}{2}
%		\left(\int_{S^1} |\psi|^2\, d\theta\right)^\frac{1}{2}
%		\\
%		&=
		\|G_\eps\|_{L_q(S^1)} 
		\|\partial_\theta^2 \psi\|_{L_p(S^1)} 
		+
		\|\partial_\theta \phi_\eps\|_{L_2(S^1)}
		\|\psi\|_{L_2(S^1)} .
	\end{split}
\end{equation}
For the second summand on the right-hand side of \eqref{eq:Holder_aux} we have the estimate
\begin{equation} \label{eq:interpolation_2nd}
	\|\partial_\theta \phi_\eps\|_{L_2(S^1)} 
	\leq 
	C\, \|\partial_\theta \phi_\eps\|_{L_\infty(S^1)} 
	=
	C\,\sup_{\theta \in S^1}\left|\int_{S^1} \partial_\theta\eta_\eps(\theta - s)\, \phi(s)\, ds\right|
	\leq 
	C_\eps \|\phi\|_{L_q(S^1)}.
\end{equation}
Thus, in order to prove the desired inequality, we are left with estimating the first summand in \eqref{eq:Holder_aux}. To this end, we write  
\begin{equation*}
	\begin{split}
		G_\eps(\theta) 
		&=
		\int_0^\theta \int_{0}^{2\pi} \bigl(\phi(\xi) - \phi(s)\bigr)\, \eta_\eps(\xi-s)\, ds\, d\xi
		\\
		&=
		\int_{0}^{2 \pi} \chi_{\{0\leq\xi \leq \theta\}}\, \int_{0}^{2\pi} \bigl(\phi(\xi) - \phi(s)\bigr)\, \eta_\eps(\xi-s)\, ds\, d\xi
		\\
		&=
		\int_{0}^{2\pi} \phi(s)\, 
		\int_{0}^{2\pi} \bigl(\chi_{\{0\leq s \leq \theta\}} - \chi_{\{0\leq \xi \leq \theta\}}\bigr)\, \eta_\eps(\xi - s)\, d\xi\, ds
	\end{split}	
\end{equation*}
for $\theta \in S^1$, where we used symmetry condition of the mollifier.
From this equation we may then derive the estimate
\begin{equation} \label{eq:G_eps}
	|G_\eps(\theta)|
	\leq
	C
	\|\phi\|_{L_q(S^1)} \left(\int_0^{2\pi} \left|W(s,\theta)\right|^p\,ds\right)^\frac{1}{p}
\end{equation}
for $(s,\theta) \in [0,2\pi]^2$, where $W(s,\theta) = \int_0^{2\pi} \bigl|\chi_{\{s \leq \theta\}} - \chi_{\{\xi \leq \theta\}}\bigr|\, \eta_\eps(\xi - s)\, d\xi$. Using that the support of $\eta_\eps(\xi-s)$ is contained in the region $\{|\xi-s| \leq \eps, |\xi - s \pm 2\pi|\leq \eps\}$, we conclude that the support of $W(s,\theta)$ is contained in the region $\{|s-\theta|\leq \eps, |s - \theta \pm 2\pi|\leq \eps\}$. Since in addition $0 \leq W(s,\theta) \leq 1$ for all $(s,\theta) \in [0,2\pi]^2$, we find that
\begin{equation*}
	\int_0^{2\pi} \left|W(s,\theta)\right|^p\, ds
	=
	\int_{\{|s-\theta|\leq \eps, |s-\theta\pm 2\pi|\leq \eps\}} 1\, ds
	\leq
	2\eps \longrightarrow 0
	\quad \text{as} \quad \eps \to 0.
\end{equation*}
Inserting this into \eqref{eq:G_eps}, we find that the first summand in \eqref{eq:Holder_aux} may be estimated by
\begin{equation}\label{eq:interpolation_1st}
	\|G_\eps\|_{L_q(S^1)}
	\leq 
	C \|G_\eps\|_{L_\infty(S^1)} 
	\leq 
	C\, (2\eps)^\frac{1}{p} \|\phi\|_{L_q(S^1)}.
\end{equation}
Finally, inserting \eqref{eq:interpolation_2nd} and \eqref{eq:interpolation_1st} into \eqref{eq:Holder_aux}, we end up with
\begin{equation*}
	\begin{split}
		\|\phi\, \partial_\theta \psi\|_{L_1(S^1)}
%		&
		\leq
%		\|G_\eps\|_{L_q(S^1)} 
%		\|\partial_\theta^2 \psi\|_{L_p(S^1)} 
%		+
%		\|\partial_\theta \phi_\eps\|_{L_2(S^1)}
%		\|\psi\|_{L_q(S^1)} 
%		\\
%		&\leq
		C (2\eps)^\frac{1}{p} \|\phi\|_{L_q(S^1)}
		\|\partial_\theta^2 \psi\|_{L_p(S^1)}
		+
		C_\eps \|\phi\|_{L_q(S^1)} \|\psi\|_{L_q(S^1)}. 
	\end{split}
\end{equation*}
Since the constant $C(2\eps)^{1/p}$ becomes arbitrarily small, as $\eps \to 0$, the statement follows after choosing $\phi = (\partial_\theta \psi)^{p/q} \in L_q(S^1)$.
\end{proof}

\medskip

We are now able to prove the following lemma.

\begin{lemma} \label{lem:bound_from_zero}
Let $s \in (0,1)$. Given $\eps \in (0,1)$, let $h^\eps$ be the corresponding solution to \eqref{eq:pde_eps} on $[0,T_\eps)$ with initial value $h^\eps_0 \in H^1(S^1)$.
There exist a time $T_0 > 0$ and a function $K_s \in C(\R_+;\R_+)$ with $\lim_{t\searrow 0} K_s(t)=0$, both independent of $\eps$, such that for any $T \in (0,T_0)$ such that $T_\eps \leq T$, we have
\begin{equation*}
	\left\|h^\eps(t) - h_0\right\|_{L_\infty((0,T_\eps);H^s(S^1))}
	\leq
	K_s(T),
	\quad
	s \in [1/2,1).
\end{equation*}
\end{lemma}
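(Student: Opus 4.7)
\emph{Strategy.} The plan is to derive a uniform-in-$\eps$ bound on $\partial_t h^\eps$ in a negative Sobolev space and then interpolate with the uniform $H^1(S^1)$ estimate furnished by Lemma~\ref{lem:H^1}. Since the $H^1$ bound is constant in time while the weak bound will be of order $t^{1/(\alpha+1)}$, the interpolation will give $\|h^\eps(t)-h_0\|_{H^s(S^1)}\leq C\,t^{\gamma_s}$ with some $\gamma_s>0$ for every $s\in[1/2,1)$, whence one can take $K_s(T):=C\,T^{\gamma_s}$.

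\emph{Carrying out the plan.} Set $v^\eps:=\eta_\eps(\partial_\theta h^\eps+\partial_\theta^3 h^\eps)$ and $F^\eps:=\eta_\eps\bigl[m_\eps(h^\eps)\,\psi_\eps(v^\eps)\bigr]$. I first establish that $\|F^\eps\|_{L_q((0,T)\times S^1)}\leq C$ for $q=(\alpha+1)/\alpha$, uniformly in $\eps$. By Lemma~\ref{lem:H^1} and the one-dimensional embedding $H^1(S^1)\hookrightarrow L_\infty(S^1)$, $\|h^\eps\|_{L_\infty(S^1)}$ is bounded uniformly, hence so is $m_\eps(h^\eps)^{q-1}$. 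For $\alpha\in(0,1)$, raising the elementary inequality~\eqref{eq:elementary_inequ} to the power $q$ gives $|\psi_\eps(v^\eps)|^q\leq (|v^\eps|^2+\eps^2)^{(\alpha-1)/2}|v^\eps|^2$, so the bound reduces to the dissipation estimate~\eqref{eq:diss_est}. For $\alpha\geq 1$, I would split integration at the regularisation scale: on $\{|v^\eps|>\eps\}$ the inequality $(|v^\eps|^2+\eps^2)^{(\alpha-1)/2}\geq|v^\eps|^{\alpha-1}$ shows that the dissipation density dominates $m_\eps(h^\eps)|v^\eps|^{\alpha+1}\geq|F^\eps|^q/C$, while on $\{|v^\eps|\leq\eps\}$ one has $|F^\eps|^q\leq C\eps^{\alpha+1}$ pointwise, which is integrable with a bound tending to zero. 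Rewriting the weak formulation~\eqref{eq:weak_solution_eps} as $\langle\partial_t h^\eps(t),\varphi\rangle=\int_{S^1}F^\eps(t)\,\partial_\theta\varphi\,d\theta$ for test functions $\varphi\in W^1_{\alpha+1}(S^1)$, H\"older's inequality yields $\|\partial_t h^\eps\|_{L_q(0,T;(W^1_{\alpha+1}(S^1))')}\leq C$. Integrating in time and applying H\"older once more in the $t$-variable produces
\begin{equation*}
    \|h^\eps(t)-h_0\|_{(W^1_{\alpha+1}(S^1))'}\leq t^{1/(\alpha+1)}\,\|\partial_\tau h^\eps\|_{L_q(0,t;(W^1_{\alpha+1})')}\leq C\,t^{1/(\alpha+1)},\quad t\in(0,T).
\end{equation*}
In one space dimension the Sobolev embedding $H^\sigma(S^1)\hookrightarrow W^1_{\alpha+1}(S^1)$ holds for $\sigma=\max\{1,\,3/2-1/(\alpha+1)\}$, so by duality $(W^1_{\alpha+1}(S^1))'\hookrightarrow H^{-\sigma}(S^1)$. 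Combining this with the uniform $H^1$ bound from Lemma~\ref{lem:H^1} via the standard fractional interpolation inequality
\begin{equation*}
    \|u\|_{H^s(S^1)}\leq C\,\|u\|_{H^{-\sigma}(S^1)}^{\frac{1-s}{1+\sigma}}\,\|u\|_{H^1(S^1)}^{\frac{s+\sigma}{1+\sigma}}
\end{equation*}
yields $\|h^\eps(t)-h_0\|_{H^s(S^1)}\leq C\,t^{(1-s)/[(\alpha+1)(1+\sigma)]}$ for $s\in[1/2,1)$, as desired.

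\emph{Main obstacle.} The principal technical point is the uniform $L_q$-bound on the nonlinear flux $F^\eps$: the elementary inequality~\eqref{eq:elementary_inequ} is tailored to the sublinear range $\alpha\in(0,1)$, where $\psi_\eps$ has a singular origin, so for $\alpha\geq 1$ a separate dichotomy at the regularisation scale $|v^\eps|=\eps$ is needed to reduce the flux norm to the dissipation functional. Once this is achieved, the remaining time-integration and Sobolev-interpolation steps are classical in one space dimension, with the crucial feature that the exponent $(1-s)/[(\alpha+1)(1+\sigma)]$ is strictly positive for every $s<1$, guaranteeing $K_s(T)\to 0$ as $T\to 0$.
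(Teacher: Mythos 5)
Your proposal is correct, but it takes a genuinely different route from the paper. The paper proves the lemma by testing the regularised equation with the negative-order multiplier $\partial_\theta^{-1}S(\partial_\theta+\partial_\theta^3)(u^\eps-u_0)$, which produces the quantity $\tfrac12\|A(u^\eps(t)-u_0)\|_{L_2}^2$ (with $A$ of symbol $\sqrt{(n^2-1)/|n|}$, i.e.\ essentially an $H^{1/2}$-norm); the right-hand side is then absorbed using Young's inequality, the dissipation bound of Lemma \ref{lem:dissipation}, the $H^1$ bound of Lemma \ref{lem:H^1}, and the auxiliary interpolation Lemma \ref{lem:interpolation_est} applied to $H(\partial_\theta^2+I)\eta_\eps(u^\eps-u_0)$, after which one interpolates between $H^{1/2}$ and $H^1$. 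You instead bound the flux $F^\eps$ in $L_{(\alpha+1)/\alpha}$ (exactly as in Lemma \ref{lem:uniform_bounds}\,(ii)--(iii), using only \eqref{eq:elementary_inequ}, the dissipation estimate and the uniform $L_\infty$ bound, so there is no circularity), deduce $\|\partial_t h^\eps\|_{L_{(\alpha+1)/\alpha}((0,T);(W^1_{\alpha+1})')}\leq C$, integrate in time to get the $\mathrm{O}(t^{1/(\alpha+1)})$ bound in $(W^1_{\alpha+1})'\hookrightarrow H^{-\sigma}$, and interpolate with the uniform $H^1$ bound; the exponent $(1-s)/[(\alpha+1)(1+\sigma)]$ is positive for $s<1$, so $K_s(T)\to0$ as required, and your dichotomy at $|v^\eps|=\eps$ correctly extends the flux bound to $\alpha\geq 1$. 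Each approach has its merits: the paper's energy-type argument yields the sharper endpoint information (an honest $L_\infty_t H^{1/2}$-smallness with rate tied directly to the dissipation), whereas your argument is softer and more modular — it reuses the a priori bounds that are needed later anyway, avoids the auxiliary Lemma \ref{lem:interpolation_est} and the Hilbert-transform manipulations, and, notably, controls all Fourier modes of $h^\eps(t)-h_0$ at once, whereas the operator $A$ in the paper annihilates the modes $n=\pm1$, so the paper's step from $\|A(u^\eps-u_0)\|_{L_2}$ to an $H^{1/2}$ bound implicitly requires the separate estimate of $|a_{\pm1}(t)-a_{\pm1}(0)|$ (in the spirit of Lemma \ref{lem:H^1}); your route sidesteps that issue. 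The price you pay is a worse (but still positive) rate in $T$, which is irrelevant for the statement being proved.
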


\medskip

%--------------------------------------------------------

Note that, due to the embedding $H^s(S^1) \hookrightarrow L_\infty(S^1)$ for $s \geq 1/2$, the estimate obtained in Lemma \ref{lem:bound_from_zero} implies in particular that
\begin{equation*}
	\left\|h^\eps(t) - h_0\right\|_{L_\infty((0,T_\eps)\times S^1)}
	\leq
	K(T).
\end{equation*}
%--------------------------------------------------------

\begin{proof}
For convenience we work with functions $u^\eps = h^\eps - \bar{h}_0$ with zero average, that is $\int_{S^1} u^\eps(t,\theta) \, d\theta = 0$.
Given an initial value $u_0$, consider the equation
\begin{equation} \label{eq:pde_difference}
	\partial_t\left(u^\eps-u^\eps_0\right) 
	+ 
	\de{}\Bigl(\eta_\eps \left[m_\eps(u^\eps)\, \psi_\eps\bigl(\eta_\eps\bigl(\de{}u\e{\eps}+\de{3}u\e{\eps}\bigr)\bigr)\right]\Bigr) = 0,
	\quad
	t \in [0,T_\eps),\, \theta \in S^1.
\end{equation}
In order to derive suitable estimates, we test the equation with the function
\begin{equation} \label{eq:test_selfadjoint}
	\phi = \partial_\theta^{-1} S \bigl(\partial_\theta + \de{3}\bigr) \bigl(u^\eps - u^\eps_0\bigr),
\end{equation}
where the operator $S\colon L_2(S^1) \to L_2(S^1)$ is defined by 
\begin{equation*}
	Sf = \sum_{n\in \Z, n \neq 0} (Sf)_n e^{in\theta}
	\quad \text{with} \quad
	(S f)_n = \frac{1}{|n|} f_n
	\quad
	\text{for}
	\quad
	f = \sum_{n\in \Z, n\neq 0} f_n e^{i n \theta}.
\end{equation*}
Therefore, we obtain 
\begin{equation}\label{eq:operator_M}
	\bigl(\partial_\theta^{-1} S \bigl(\partial_\theta + \partial^3_{\theta}\bigr)f\bigr)_n
	=
	\frac{1}{i n} \frac{1}{|n|} (-i n^3 + i n)f_n
	=
	-\frac{1}{|n|} (n^2 - 1)f_n
	=:
	(-Mf)_n.
\end{equation}
The operator $M$ is now a nice operator in the sense that $M$ is nonnegative, self-adjoint and may be written as $M = A^2$, where $A\colon H^1(S^1) \to L_2(S^1)$ is defined by
\begin{equation*}
	Af = \sum_{n \in \Z, n \neq 0} (Af)_n e^{in\theta}
	\quad \text{with} \quad
	(Af)_n := \sqrt{\frac{n^2 - 1}{|n|}} f_n
	\quad
	\text{for}
	\quad
	f = \sum_{n\in \Z, n\neq 0} f_n e^{i n \theta}.
\end{equation*}
On the other hand, we can identify $S (\partial_\theta + \partial^3_{\theta})=H(\de{2}+I)$, where $H$ is the periodic Hilbert operator given by
\begin{displaymath}
	H(f)(\theta)
	=
	\sum_{n\in\Z}h_ne\e{in\theta}
	\quad\text{with}\quad 
	h_n=-i\, \text{sgn}(n)f_n
	\quad\text{for}\quad f(\theta)=\sum_{n\in\Z}f_n e\e{in\theta},
	\quad \theta \in S^1. 
\end{displaymath}
Testing \eqref{eq:pde_difference} with the function $\varphi = -M(u^\eps - u_0)$, introduced in \eqref{eq:test_selfadjoint}, respectively \eqref{eq:operator_M}, we obtain
\begin{equation*} 
	\begin{split}
	& \quad
	\int_{S^1} \partial_t\left(u^\eps - u_0\right) \varphi\, d\theta 
	=
	\int_{S^1}
	m_\eps(u^\eps)\, \psi_\eps\bigl(\eta_\eps\bigl(\de{}u\e{\eps}+\de{3}u\e{\eps}\bigr)\bigr)\, \eta_\eps(\de{}\varphi)\,
	d\theta
	\\
	\Longleftrightarrow 
	&\quad
	-\int_{S^1} \partial_t u^\eps M(u^\eps - u_0)\, d\theta 
	=
	\int_{S^1}
	m_\eps(u^\eps)\, \psi_\eps\bigl(\eta_\eps\bigl(\de{}u\e{\eps}+\de{3}u\e{\eps}\bigr)\bigr)\, \left[S (\partial_\theta + \de{3})\eta_\eps(u^\eps-u_0)\right]\,
	d\theta
	\\
	\Longleftrightarrow 
	&\quad
	\int_{S^1} \partial_t u^\eps A(u^\eps-u_0)\, A(u^\eps-u_0)\, d\theta 
	=
	-\int_{S^1}
	m_\eps(u^\eps)\, \psi_\eps\bigl(\eta_\eps\bigl(\de{}u\e{\eps}+\de{3}u\e{\eps}\bigr)\bigr)\, \left[S (\partial_\theta + \de{3})\eta_\eps(u^\eps-u_0)\right]\,
	d\theta
	\end{split}
\end{equation*}
for $t \in [0,T_\eps)$. As in Lemma \ref{lem:H^1}, the inequality \eqref{eq:elementary_inequ} and Young's inequality with exponents $p=\frac{\alpha+1}{\alpha}$ and $q = \alpha+1$ yields, for all $\delta > 0$ the existence of some constant $C_{\delta,\alpha} > 0$ such that
\begin{equation} \label{eq:test_with_selfadjoint}
	\begin{split}
	&
	\frac{d}{dt} \left(\frac{1}{2} \int_{S^1} \left|A(u^\eps-u_0)\right|^2\, d\theta\right)
	\\
	&=
	-\int_{S^1}
	m_\eps(u^\eps)\, \psi_\eps\bigl(\eta_\eps\bigl(\de{}u\e{\eps}+\de{3}u\e{\eps}\bigr)\bigr)\, \left[S (\partial_\theta + \de{3})\eta_\eps(u^\eps-u_0)\right]\,
	d\theta
	\\
	&
	\leq
	\int_{S^1} m_\eps(u^\eps) \left|\bigl(\eta_\eps\bigl(\de{}u\e{\eps}+\de{3}u\e{\eps}\bigr)\bigr) + \eps^2\right|^\frac{\alpha(\alpha-1)}{2(\alpha+1)} \left|\eta_\eps\bigl(\de{}u\e{\eps}+\de{3}u\e{\eps}\bigr)\right|^\frac{2\alpha}{\alpha+1}
	\left|S(\partial_\theta + \partial_\theta^3) \eta_\eps(u^\eps - u_0)\right|\, d\theta
	\\
	&
	\leq
	\delta \int_{S^1}
	m_\eps(h^\eps)\, \left|\bigl(\eta_\eps\bigl(\partial_\theta h^\eps + \partial_\theta^3 h^\eps\bigr)\bigr)^2 + \eps^2\right|^\frac{\alpha-1}{2}
	\left|\eta_\eps\bigl(\de{}u\e{\eps}+\de{3}u\e{\eps}\bigr)\right|^2 d\theta
	\\
	&\quad
	+ C_{\delta,\alpha}
	\int_{S^1}
	m_\eps(u^\eps)\,
	\left|S (\partial_\theta + \de{3})\eta_\eps(u^\eps-u_0)\right|^{\alpha+1}
	d\theta
	\\
	&\leq
	\delta \int_{S^1}
	m_\eps(h^\eps)\, \left|\bigl(\eta_\eps\bigl(\partial_\theta h^\eps + \partial_\theta^3 h^\eps\bigr)\bigr)^2 + \eps^2\right|^\frac{\alpha-1}{2}
	\left|\eta_\eps\bigl(\de{}u\e{\eps}+\de{3}u\e{\eps}\bigr)\right|^2 d\theta
	\\
	&\quad
	+ C_{\delta,\alpha}
	\int_{S^1}
	m_\eps(u^\eps)\,
	\left|H (\de{2} +I)\eta_\eps(u^\eps-u_0)\right|^{\alpha+1}\,
	d\theta
	\end{split}
\end{equation}
for all $t \in [0,T_\eps)$.
Next, we estimate the second integral on the right-hand side of \eqref{eq:test_with_selfadjoint}. Using the definition of the operator $S$ and Lemma \ref{lem:interpolation_est}, we find that
\begin{equation*}
\begin{split}
	&
    \int_{S^1}
	m_\eps(u^\eps)\, 
	\left|H (\de{2} +I)\eta_\eps(u^\eps-u_0)\right|^{\alpha+1}\,
	d\theta
	\\
	&=
	C\left\|u\e{\eps}\right\|_{L_{\infty}(S\e{1})}\e{\alpha+2}
	\left\|H(\de{2}+I)\eta_\eps(u\e{\eps}-u_0)\right\|_{L_{\alpha+1}(S\e{1})}\e{\alpha+1}
	\\
	&\leq C\left\|u\e{\eps}\right\|_{L_{\infty}(S\e{1})}\e{\alpha+2}
	\left\|(\de{2}+I)\eta_\eps(u\e{\eps}-u_0)\right\|_{L_{\alpha+1}(S\e{1})}\e{\alpha+1}
	\\
	&\leq 
	C\left\|u\e{\eps}\right\|_{L_{\infty}(S\e{1})}\e{\alpha+2}
	\left(
	\tilde{\delta}\left\|(\de{}+\de{3})\eta_\eps(u\e{\eps}-u_0)\right\|_{L_{\alpha+1}}\e{\alpha+1}
	+
	C_{\tilde{\delta}}\left\|(\de{}+\de{-1})\eta_\eps(u\e{\eps}-u_0)\right\|_{L_{2}(S\e{1})}\e{\alpha+1}
	\right)
	\\
	&\leq
	\tilde{\delta}\left\|u\e{\eps}\right\|_{H\e{1}(S\e{1})}\e{\alpha+2}\left\|(\de{}+\de{3})\eta_\eps(u\e{\eps}-u_0)\right\|_{L_{\alpha+1}(S^1)}\e{\alpha+1}
	+ C_{\tilde{\delta}}\left\|u\e{\eps}\right\|_{H\e{1}(S^1)}\e{\alpha+2}\left\|u\e{\eps}-u_0\right\|_{H\e{1}(S\e{1})}\e{\alpha+1}
	\end{split}
\end{equation*}
for arbitrarily small $\tilde{\delta} > 0$.
Integrating \eqref{eq:test_with_selfadjoint} with respect to time and using that at the initial time $t=0$ we have $\left\|A\bigl(u^\eps(0)-u_0\bigr)\right\|_{L_2(S^1)} = 0$ yield
\begin{equation*}
\begin{split}
	&
	\frac{1}{2} \left\|A\bigl(u^\eps(t)-u_0\bigr)\right\|_{L_2(S^1)}^2
	\\
	&
	\leq
	\delta\int_0\e{T_\eps} \int_{S^1}
	m_\eps(h^\eps)\, \left|\bigl(\eta_\eps\bigl(\partial_\theta h^\eps + \partial_\theta^3 h^\eps\bigr)\bigr)^2 + \eps^2\right|^\frac{\alpha-1}{2}
	\left|\eta_\eps\bigl(\de{}u\e{\eps}+\de{3}u\e{\eps}\bigr)\right|^2 d\theta\, dt
	\\
	&\quad 
	+
	C_{\delta,\tilde{\delta},\alpha} \int_0\e{T_\eps}\norm{u\e{\eps}}_{H\e{1}(S\e{1})}\e{\alpha+2}\norm{u\e{\eps}-u_0}_{H\e{1}(S\e{1})}^{\alpha+1}\, dt
%	\\
%	&\quad 
	+
	C\tilde{\delta}
	\int_0\e{T_\eps}\norm{u\e{\eps}}\e{\alpha+2}_{H\e{1}(S\e{1})}
	\int_{S\e{1}}
	\left|(\de{}+\de{3})\eta_\eps(u\e{\eps}-u_0)\right|\e{\alpha+1}d\theta dt
	\\
	&\leq
	C \delta 
	+
	C_{\delta,\tilde{\delta},\alpha} T
	+ C \tilde{\delta},
\end{split}
\end{equation*}
where we use Lemma \ref{lem:H^1}, the fact that the dissipation is bounded thanks to Lemma \ref{lem:dissipation} and $T_\eps \leq T$.
Note that the right-hand side of this inequality can be made arbitrarily small by choosing $\delta, \tilde{\delta}$, and then $T$ sufficiently small.
Hence, by definition of $A$, we find that there exists a function $K_\frac{1}{2}(T) \geq 0$ with $\lim_{T \to 0} K_\frac{1}{2}(T) = 0$ such that
\begin{equation*}
	\left\|u^\eps - u_0\right\|_{L_\infty((0,T);H^{1/2}(S^1))} \leq K_\frac{1}{2}(T).
\end{equation*}
In view of Lemma \ref{lem:H^1}, interpolation between $H^{1/2}(S^1)$ and $H^1(S^1)$ leads us to the desired estimate
\begin{equation*}
	\left\|u^\eps - u_0\right\|_{L_\infty((0,T);H^s(S^1))} \leq K_s(T)
\end{equation*}
	for all $s \in [1/2,1]$. This completes the proof for $s \in (0,1/2]$.
\end{proof}

%--------------------------------------------------------
%--------------------------------------------------------
\medskip 

Note that Lemma \ref{lem:bound_from_zero} implies that the solution $h^\eps$ stays bounded away from zero in the sense that
\begin{equation*}
	0 < \frac{\bar{h}_0}{2} \leq h(t,\theta) \leq 2 \bar{h}_0,
	\quad
	t \in [0,T],\, \theta \in S^1,
\end{equation*}
for $T > 0$ sufficiently small, i.e. with a bound independent of $\eps$, if we require $\|h_0 - \bar{h}_0\|_{H^1(S^1)}\leq \delta$ for $\delta > 0$ sufficiently small (and independent of $\eps$).

%--------------------------------------------------------
%--------------------------------------------------------
\medskip

In the following lemma we collect the uniform a-priori estimates for the approximations $h^\eps$.

%--------------------------------------------------------
%--------------------------------------------------------
\medskip

\begin{lemma}[Uniform bounds] \label{lem:uniform_bounds}
Let $\eps \in (0,1)$ be given and let $h^\eps$ be the corresponding solution to \eqref{eq:pde_eps} on $[0,T_\eps)$ with initial value $h_0 \in H^1(S^1)$. There exist $\delta > 0$ sufficiently small and $T > 0$, both independent of $\eps$ such that, if $\|h_0 - \bar{h}_0\|_{H^1(S^1)} \leq \delta$, then $T_\eps > T$ and the functions $h^\eps$ have the following properties.
\begin{itemize}
	\item[(i)] The family $(h^\eps)_\eps$ is uniformly bounded in $L_\infty\bigl((0,T);H^1(S^1)\bigr)$; %\cap L_{\alpha+1}\bigl((0,T);W^3_{\alpha+1}(S^1)\bigr)$;
	\item[(ii)] the family $\bigl(m_\eps(h^\eps)\, \psi_\eps\bigl(\eta_\eps\bigl(\de{}h\e{\eps}+\de{3}h\e{\eps}\bigr)\bigr)\bigr)_\eps$ is uniformly bounded in $L_\frac{\alpha+1}{\alpha}\bigl((0,T)\times S^1\bigr)$;
	\item[(iii)] the family $(\partial_t h^\eps)_\eps$ is uniformly bounded in $L_\frac{\alpha+1}{\alpha}\bigl((0,T);(W^1_{\alpha+1}(S^1))'\bigr)$;
	\item[(iv)] the family $\bigl(\eta_\eps\bigl(\de{}h\e{\eps}+\de{3}h\e{\eps}\bigr)\bigr)_\eps$ is uniformly bounded in $L_{\alpha+1}\bigl((0,T)\times S^1\bigr)$;
	\item[(v)] the family $(\eta_\eps h^\eps)_\eps$ is uniformly bounded in $L_{\alpha+1}\bigl((0,T);W^3_{\alpha+1}(S^1)\bigr)$;
	\item[(vi)] the family $\bigl(\partial_t(\de{}h\e{\eps})\bigr)_\eps$ is uniformly bounded in $L_\frac{\alpha+1}{\alpha}\bigl((0,T);\bigl(W^1_{\alpha+1,0}(S^1)\cap W^2_{\alpha+1}(S^1)\bigr)'\bigr)$.
\end{itemize}
\end{lemma}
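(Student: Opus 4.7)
This lemma assembles the a-priori estimates already established in Lemmas \ref{lem:cons_mass}, \ref{lem:dissipation}, \ref{lem:H^1} and \ref{lem:bound_from_zero}. The smallness hypothesis $\|h_0-\bar h_0\|_{H^1(S^1)}\leq \delta$ enters only through Lemma \ref{lem:bound_from_zero}, whose conclusion (together with the remark following it) provides a two-sided pointwise bound
\begin{equation*}
\tfrac{\bar h_0}{2}\leq h^\eps(t,\theta)\leq 2\bar h_0, \qquad (t,\theta)\in[0,T]\times S^1,
\end{equation*}
with $T>0$ and $\delta>0$ independent of $\eps$. By the definition \eqref{eq:def_m_eps} of $m_\eps$ this gives $\bigl(\bar h_0/2\bigr)^{\alpha+2}\leq m_\eps(h^\eps)\leq \bigl(2\bar h_0\bigr)^{\alpha+2}$ on $[0,T]\times S^1$. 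Combined with the continuity $h^\eps\in C([0,T];H^1(S^1))$ coming from Lemma \ref{lem:H^1}, Remark \ref{rem:extension} forces $T<T_\eps$. Item (i) is then the content of Lemma \ref{lem:H^1}.

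For (iv), I would set $Y^\eps:=\eta_\eps(\partial_\theta h^\eps+\partial_\theta^3 h^\eps)$ and split $[0,T]\times S^1$ into $\{|Y^\eps|\leq \eps\}$ and $\{|Y^\eps|>\eps\}$. On the first region $|Y^\eps|^{\alpha+1}\leq \eps^{\alpha+1}$, contributing at most $2\pi T\eps^{\alpha+1}$; on the second the elementary comparison $(Y^\eps)^2+\eps^2\leq 2(Y^\eps)^2$ yields $|Y^\eps|^{\alpha+1}\leq C_\alpha\bigl((Y^\eps)^2+\eps^2\bigr)^{(\alpha-1)/2}(Y^\eps)^2$, so that the lower bound on $m_\eps(h^\eps)$ and the dissipation estimate \eqref{eq:diss_est} produce the $L_{\alpha+1}$-bound. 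For (ii), the pointwise inequality $|\psi_\eps(y)|\leq (y^2+\eps^2)^{\alpha/2}$ gives $|\psi_\eps(Y^\eps)|^{(\alpha+1)/\alpha}\leq C\bigl(|Y^\eps|^{\alpha+1}+\eps^{\alpha+1}\bigr)$; multiplying by the uniformly bounded quantity $m_\eps(h^\eps)^{(\alpha+1)/\alpha}$ and integrating over $[0,T]\times S^1$ reduces (ii) to (iv).

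Items (iii) and (vi) then follow directly from the weak form \eqref{eq:weak_solution_eps}. Writing $F^\eps:=m_\eps(h^\eps)\,\psi_\eps(Y^\eps)$ and using that $\eta_\eps$ is a contraction on $L_p(S^1)$, one obtains $|\langle\partial_t h^\eps(t),\varphi\rangle|\leq \|F^\eps(t)\|_{L_{(\alpha+1)/\alpha}(S^1)}\|\varphi\|_{W^1_{\alpha+1}(S^1)}$, which together with (ii) yields (iii). For (vi), an extra integration by parts converts the pairing into $-\int_{S^1}\eta_\eps F^\eps\,\partial_\theta^2\varphi\,d\theta$, giving the bound $\|\partial_t\partial_\theta h^\eps(t)\|_{(W^1_{\alpha+1,0}\cap W^2_{\alpha+1})'}\leq \|F^\eps(t)\|_{L_{(\alpha+1)/\alpha}(S^1)}$; the subspace $W^1_{\alpha+1,0}$ is precisely what makes the double integration by parts legitimate, and is the natural dual of $\partial_t\partial_\theta h^\eps$, which itself has zero average.

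Finally, for (v) I would exploit that $\eta_\eps$ commutes with $\partial_\theta$, so that
\begin{equation*}
\partial_\theta^3(\eta_\eps h^\eps)=Y^\eps-\partial_\theta(\eta_\eps h^\eps).
\end{equation*}
By (i), $\eta_\eps h^\eps$ is uniformly bounded in $L_\infty((0,T);H^1(S^1))$ and therefore, via the one-dimensional embedding $H^1(S^1)\hookrightarrow L_{\alpha+1}(S^1)$, also $\partial_\theta(\eta_\eps h^\eps)$ is uniformly bounded in $L_\infty((0,T);L_{\alpha+1}(S^1))$. Combined with (iv) this gives the desired $L_{\alpha+1}$-bound for $\partial_\theta^3(\eta_\eps h^\eps)$, and interpolation between $L_{\alpha+1}$ and $W^3_{\alpha+1}$ controls the missing second derivative. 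The main conceptual work has, by this point, already been carried out in Lemma \ref{lem:bound_from_zero}: once the smallness of $\|h_0-\bar h_0\|_{H^1(S^1)}$ rules out that $h^\eps$ touches zero, the mobility $m_\eps(h^\eps)$ becomes uniformly elliptic and all the remaining estimates assemble routinely.
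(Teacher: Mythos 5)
Items (i)--(iv) and (vi) of your proposal are correct and essentially reproduce the paper's own argument: (i) is Lemma \ref{lem:H^1}; (iv) is the same splitting of $[0,T]\times S^1$ into $\{|Y^\eps|\le\eps\}$ and $\{|Y^\eps|>\eps\}$ (with $Y^\eps=\eta_\eps(\partial_\theta h^\eps+\partial_\theta^3 h^\eps)$), combined with the lower bound on $m_\eps(h^\eps)$ supplied by Lemma \ref{lem:bound_from_zero} and the dissipation bound of Lemma \ref{lem:dissipation}; your (ii) is deduced from (iv) via $|\psi_\eps(y)|\le (y^2+\eps^2)^{\alpha/2}$ instead of directly from the dissipation and the $L_\infty$ bound as in the paper, which is a harmless variation; (iii) and (vi) follow from the weak formulation and Young's inequality for convolutions exactly as in the paper.

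The genuine gap is in (v). You assert that $\partial_\theta(\eta_\eps h^\eps)$ is uniformly bounded in $L_\infty\bigl((0,T);L_{\alpha+1}(S^1)\bigr)$ ``via the embedding $H^1(S^1)\hookrightarrow L_{\alpha+1}(S^1)$''. The uniform bound of Lemma \ref{lem:H^1} only controls $\partial_\theta h^\eps$ in $L_2(S^1)$, and $L_2(S^1)\subset L_{\alpha+1}(S^1)$ only when $\alpha+1\le 2$; since the lemma (and Theorem \ref{thm:existence}) is claimed for every $\alpha>0$, hence also in the shear-thinning range $\alpha>1$, this step breaks down there. Mollification does not rescue it, because $\|\eta_\eps f\|_{L_{\alpha+1}(S^1)}\le C_\eps\|f\|_{L_2(S^1)}$ with a constant that blows up as $\eps\to 0$. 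This is precisely where the paper does real work: it decomposes $\eta_\eps h^\eps=a_0+\Phi+v$ with $\Phi\in\mathrm{span}\{\cos\theta,\sin\theta\}$ and $v$ orthogonal to the modes $n=0,\pm1$, and uses that $(\partial_\theta^3 v)_n=m(n)\bigl((\partial_\theta+\partial_\theta^3)v\bigr)_n$ with the bounded multiplier $m(n)=n^2/(n^2-1)$, so that by the Littlewood--Paley/Marcinkiewicz multiplier theorem $\|\partial_\theta^3 v\|_{L_{\alpha+1}}\le A\,\|(\partial_\theta+\partial_\theta^3)(\eta_\eps h^\eps)\|_{L_{\alpha+1}}$ (here $(\partial_\theta+\partial_\theta^3)\Phi=0$), while $\partial_\theta^3\Phi$ is controlled pointwise by $|a_{\pm1}|\le C\|h^\eps\|_{H^1}$. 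Your identity $\partial_\theta^3(\eta_\eps h^\eps)=Y^\eps-\partial_\theta(\eta_\eps h^\eps)$ together with (iv) does give (v) for $\alpha\le 1$, but for $\alpha>1$ you need a multiplier argument of this type: obtaining $\partial_\theta(\eta_\eps h^\eps)\in L_{\alpha+1}$ uniformly is essentially equivalent to it, via the bounded multiplier $1/(1-n^2)$ acting on $Y^\eps$ away from the modes $n=0,\pm1$. The concluding interpolation step for the second derivative is fine once the third derivative is controlled.
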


%--------------------------------------------------------
%--------------------------------------------------------

\begin{proof}
(i) Uniform boundedness of $(h^\eps)_\eps$ in $L_\infty\bigl((0,T);H^1(S^1)\bigr)$ has already been proved in Lemma \ref{lem:H^1}. 

(ii) This follows by applying \eqref{eq:elementary_inequ}, H\"older's inequality with exponents $p=\alpha+1$ and $q=(\alpha+1)/\alpha$ and using the uniform bounds on the dissipation term (Lemma \ref{lem:dissipation}) and on the $L_\infty\bigl((0,T_\eps)\times S^1\bigr)$-norm (cf. part (i) of this lemma):
\begin{equation*}
    \left\|m_\eps(h^\eps)\, \psi_\eps\bigl(\eta_\eps\bigl(\de{}h\e{\eps}+\de{3}h\e{\eps}\bigr)\bigr)\right\|_{L_\frac{\alpha+1}{\alpha}((0,T)\times S^1)}
    \leq
    C \left\|h^\eps\right\|_{L_\infty((0,T)\times S^1)}^{\alpha+2} \left(D_T^\eps[h^\eps]\right)^\frac{\alpha}{\alpha+1}
    \leq
    C(h_0).
\end{equation*}

(iii) Since $h^\eps$ is a weak solution to \eqref{eq:pde_eps}, we have that
\begin{equation*}
	\int_0^T \langle \partial_t h^\eps(t), \varphi(t) \rangle_{W^1_{\alpha+1}(S^1)}\, dt
	=
	\int_0^T \int_{S^1} m_\eps(h^\eps)\, \psi_\eps\left(\eta_\eps\bigl(\de{}h\e{\eps}+\de{3}h\e{\eps}\bigr)\right) \eta_\eps(\de{}\varphi)\, d\theta\, dt,
\end{equation*}
for all $\varphi \in L_{\alpha+1}\bigl((0,T);W^1_{\alpha+1}(S^1)\bigr)$. Using again the elementary inequality \eqref{eq:elementary_inequ} and H\"older's inequality with exponents $p=\alpha+1$ and $q=(\alpha+1)/\alpha$, we obtain
\begin{align*}
	&
	\left|\int_0^T \langle \partial_t h^\eps(t), \varphi(t) \rangle_{W^1_{\alpha+1}(S^1)}\, dt
	\right|
	\\
	& \leq
	\int_0^T \int_{S^1} m_\eps(h^\eps)\, \left|\bigl(\eta_\eps\bigl(\partial_\theta h^\eps + \partial_\theta^3 h^\eps\bigr)\bigr)^2 + \eps^2\right|^{\frac{\alpha(\alpha-1)}{2(\alpha+1)}} \left|\eta_\eps\bigl(\partial_\theta h^\eps + \partial_\theta^3 h^\eps\bigr)\right|^\frac{2\alpha}{\alpha+1} \left|\eta_\eps(\de{}\varphi)\right|\, d\theta\, dt
	\\
	&\leq
	C\,	
	\bigl(D^\eps_T[h^\eps]\bigr)^\frac{\alpha}{\alpha+1}
	\left(\int_0^T \int_{S^1} m_\eps(h^\eps)\, \left|\eta_\eps(\de{}\varphi)\right|^{\alpha+1}\, d\theta\, dt
	\right)^\frac{1}{\alpha+1}.
%	\left(\int_0^T \int_{S^1}.
%	m_\eps(h^\eps)\, \left|\eta_\eps\bigl(\de{}h\e{\eps}+\de{3}h\e{\eps}\bigr)\right|^{\alpha+1}\, d\theta\, dt
%	\right)^\frac{\alpha}{\alpha+1}.
\end{align*}
Young's inequality for convolutions and the fact that the mollifier has mass $1$, thus lead to the uniform estimate
\begin{equation*}
    \left|\int_0^T \langle \partial_t h^\eps(t), \varphi(t) \rangle_{W^1_{\alpha+1}(S^1)}\, dt\right|
    \leq
    C \left\|h^\eps\right\|_{L_\infty((0,T)\times S^1)}^\frac{\alpha+2}{\alpha+1} \left(D_T^\eps[h^\eps]\right)^\frac{\alpha}{\alpha+1}
    \leq
    C(h_0)
\end{equation*}
with a positive constant $C(h^\eps_0)$ that does not depend on $\eps$. 

(iv) We prove that $\eta_\eps\bigl(\de{}h\e{\eps}+\de{3}h\e{\eps}\bigr)$ is uniformly bounded in $L_{\alpha+1}\bigl((0,T)\times S^1\bigr)$. To this end, recall from Lemma \ref{lem:bound_from_zero} that $h^\eps$ is, for each $\eps \in (0,1)$, bounded from away from zero for short times. For $\eps >0$, we split
\begin{equation*}
	\begin{split}
		&
		\int_0^T \int_{S^1} \left|\eta_\eps\bigl(\de{}h\e{\eps}+\de{3}h\e{\eps}\bigr)\right|^{\alpha+1}\, d\theta\, dt
		\\
		=\ &
		\iint_{\{|\eta_\eps(\de{}h\e{\eps}+\de{3}h\e{\eps})| \leq \eps\}}
		\left|\eta_\eps\bigl(\de{}h\e{\eps}+\de{3}h\e{\eps}\bigr)\right|^{\alpha+1}\, d\theta\, dt
		+
		\iint_{\{|\eta_\eps(\de{}h\e{\eps}+\de{3}h\e{\eps})| > \eps\}}
		\left|\eta_\eps\bigl(\de{}h\e{\eps}+\de{3}h\e{\eps}\bigr)\right|^{\alpha+1}\, d\theta\, dt.
	\end{split}
\end{equation*}
Using the inequality
\begin{equation*}
	|x|^{\alpha+1} 
	=
	\left(\tfrac{1}{2} |x|^2 + \tfrac{1}{2} |x|^2\right)^\frac{\alpha-1}{2} |x|^2
	\leq
	\left(\tfrac{1}{2}\right)^\frac{\alpha-1}{2}
	\left(|x|^2 +  \sigma^2\right)^\frac{\alpha-1}{2} |x|^2,
	\quad
	|x| > \eps,
\end{equation*}
this leads us to the estimate
\begin{equation*}
	\begin{split}
		&
		\int_0^T \int_{S^1} \left|\eta_\eps\bigl(\de{}h\e{\eps}+\de{3}h\e{\eps}\bigr)\right|^{\alpha+1}\, d\theta\, dt
		\\
		\leq\ &
		2\pi T \eps^{\alpha+1}
		+
		\int_0^T \int_{S^1} \left|\eta_\eps\bigl(\de{}h\e{\eps}+\de{3}h\e{\eps}\bigr) + \sigma^2\right|^\frac{\alpha-1}{2}
		\left|\eta_\eps\bigl(\de{}h\e{\eps}+\de{3}h\e{\eps}\bigr)\right|^2 d\theta\, dt
		\\
		\leq\ &
		2\pi T \eps^{\alpha+1}
		+
		D_T[h^\eps].
	\end{split}
\end{equation*}
%
%\begin{align*}
%    \int_0^T \int_{S^1} \left|\eta_\eps\bigl(\de{}h\e{\eps}+\de{3}h\e{\eps}\bigr)\right|^{\alpha+1}\, d\theta\, dt
%    \leq
%    \textcolor{magenta}{\frac{\bar{h}^\eps_0}{2}} 
%    \int_0^T \int_{S^1} \left|h^\eps\right|^{\alpha+2} \left|\eta_\eps\bigl(\de{}h\e{\eps}+\de{3}h\e{\eps}\bigr)\right|^{\alpha+1}\, d\theta\, dt
%    =
%    C\, D_T^\eps[h^\eps].
%\end{align*}
Using again the uniform bound for the dissipation functional derived in Lemma \ref{lem:dissipation}, we obtain the desired bound
\begin{equation*}
    \int_0^T \int_{S^1} \left|\eta_\eps\bigl(\de{}h\e{\eps}+\de{3}h\e{\eps}\bigr)\right|^{\alpha+1}\, d\theta\, dt
    \leq
    C(h_0).
\end{equation*}
% \textcolor{magenta}{Recall that, due to Lemma \ref{lem:bound_from_zero}, $h^\eps$ is, for each $\eps \in (0,1)$, bounded from away from zero for short times. This implies
%\begin{align*}
%	&
%	\int_0^T \int_{S^1} \left|\bigl(\eta_\eps\bigl(\de{}h\e{\eps}+\de{3}h\e{\eps}\bigr)\bigr)^2 + \eps^2\right|^\frac{\alpha-1}{2}\,
%	\left|\eta_\eps\bigl(\de{}h\e{\eps}+\de{3}h\e{\eps}\bigr)\right|^2 d\theta\, dt
%	\\
%	&
%	\leq
%	\frac{\bar{h}^\eps_0}{2} 
%	\int_0^T \int_{S^1} m_\eps(h^\eps) 
%	\left|\bigl(\eta_\eps\bigl(\de{}h\e{\eps}+\de{3}h\e{\eps}\bigr)\bigr)^2 + \eps^2\right|^\frac{\alpha-1}{2}\,
%	\left|\eta_\eps\bigl(\de{}h\e{\eps}+\de{3}h\e{\eps}\bigr)\right|^2 d\theta\, dt
%	\\
%	&=
%	\frac{\bar{h}^\eps_0}{2}\, D_T^\eps[h^\eps].
%\end{align*}
%Using again the dissipation estimate in Lemma \ref{lem:dissipation}, we obtain the desired bound
%\begin{equation*}
%	\int_0^T \int_{S^1} \left|\bigl(\eta_\eps\bigl(\de{}h\e{\eps}+\de{3}h\e{\eps}\bigr)\bigr)^2 + \eps^2\right|^\frac{\alpha-1}{2}\,
%	\left|\eta_\eps\bigl(\de{}h\e{\eps}+\de{3}h\e{\eps}\bigr)\right|^2 d\theta\, dt
%	\leq
%	C(h^\eps_0).
%\end{equation*}}

(v)
We prove the estimate
\begin{equation*}
    \int_0^T\int_{S\e{1}}\left|\eta_\eps\bigl(\de{3}h^\eps\bigr)\right|\e{\alpha+1}d\theta\, dt \leq \int_0^T\int_{S\e{1}}\left|\eta_\eps\bigl(\de{}h^\eps+\de{3}h^\eps\bigr)\right|\e{\alpha+1}d\theta\, dt + C(T) \norm{\eta_\eps(h^\eps)}_{H\e{1}(S\e{1})}\e{\alpha+1}.
\end{equation*}    
To this end, 
we define $V_0= \text{span}\{\cos(\theta),\sin(\theta)\}\subset L_{2}(S^1)$ and $V_1$ as the orthogonal complement of $V_0\oplus \text{span}\{1\}$ in $L_{2}(S^1)$.
Given $\eta_\eps(h^\eps)\in H\e{3}(S^1)$, we can decompose $\eta_\eps(h^\eps)$ as 
\begin{equation}\label{descomposicion}
    \eta_\eps(h^\eps)
    =
    a_0+\Phi+v
\end{equation}
with $a_0\in\R$, $\Phi\in V_0\cap H\e{3}(S^1)$ and $v\in V_1\cap H\e{3}(S^1)$.
In terms of the corresponding Fourier series we may write
\begin{equation*}
  (\de{3}v)_n
  =
  -in\e{3}v_n=m(n)i(n-n\e{3})v_n,
  \quad
  \text{where}
  \quad
  m(n)
  =
  \frac{-in\e{3}}{i(n-n\e{3})}
  =
  \frac{n\e{2}}{n\e{2}-1} 
  \quad \text{for} \quad 
  n\neq 0,\pm 1.
\end{equation*}
Since $m(n)$ is bounded, we can apply the Littlewood--Paley Theory (c.f. \cite[Chapter 4]{stein}) to obtain 
\begin{equation}\label{littlewood}
    \norm{\de{3}v}_{L_{\alpha+1}(S^1)}\e{\alpha+1}\leq A\norm{(\de{}+\de{3})v}_{L_{\alpha+1}(S^1)}\e{\alpha+1}
\end{equation}
with a positive constant $A$ which is independent of $v$.
Therefore, using \eqref{littlewood} and the fact that
$\Phi=a_1(t)\cos\theta+a_{-1}(t)\sin\theta$, we can write
\begin{equation}
\begin{split}
    \int_0^T\int_{S\e{1}}\left|\eta_\eps\bigl(\de{3}h^\eps\bigr)\right|\e{\alpha+1}d\theta\, dt
    & \leq C\left(\int_0^T\int_{S\e{1}}|\de{3}\Phi|\e{\alpha+1}d\theta\, dt
    +
    \int_0^T\int_{S\e{1}}|\de{3}v|\e{\alpha+1}d\theta\, dt\right)\\\label{eq86}
    &\leq C\left(\int_0^T\abs{a_1(t)}\e{\alpha+1}+\abs{a_{-1}(t)}\e{\alpha+1}\, dt\right) \\ &\quad + CA\left(\int_0^T\int_{S\e{1}}\abs{\de{}v+\de{3}v}\e{\alpha+1}\, d\theta\, dt\right).  
\end{split}
\end{equation}
Indeed, for the first term on the right-hand side of \eqref{eq86} we use the structure of $\Phi$ and Young's inequality for convolutions to derive the pointwise estimate 
\begin{equation*}
\begin{split}
    \abs{a_1(t)}\e{\alpha+1}+\abs{a_{-1}(t)}\e{\alpha+1}&\leq C\left(\abs{a_1(t)}\e{2}+\abs{a_{-1}(t)}\e{2}\right)\e{\frac{\alpha+1}{2}} \\ &= C\norm{\Phi(t)}_{L_2(S\e{1})}\e{\alpha+1}\\ &\leq C\norm{\eta_\eps(h^\eps(t))}_{H\e{1}(S\e{1})}\e{\alpha+1}.
\end{split}
\end{equation*}
For the second integral on the right-hand side of \eqref{eq86} we use that $\de{}\Phi+\de{3}\Phi=0$ and we obtain that
\begin{equation*}
    \int_0^T\int_{S\e{1}}\abs{\de{}v+\de{3}v}\e{\alpha+1}\, d\theta\, dt
    =
    \int_0^T\int_{S\e{1}}\abs{\de{}\eta_\eps(h^\eps)+\de{3}\eta_\eps(h^\eps)}\e{\alpha+1}\, d\theta\, dt.
\end{equation*}
Thus, we can conclude that
\begin{equation*}
\begin{split}
    \int_0^T\int_{S\e{1}}\left|\eta_\eps\bigl(\de{3}h^\eps\bigr)\right|\e{\alpha+1}d\theta\, dt &\leq C \int_0^T \norm{h^\eps(t)}_{H\e{1}(S\e{1})}\e{\alpha+1}\, dt
    + C_A \int_0^T\int_{S\e{1}}\bigl|\eta_\eps\bigl(\de{} h^\eps+\de{3}h^\eps\bigr)\bigr|\e{\alpha+1}\, d\theta\, dt
    \\ &
    \leq C(T,h_0)
\end{split}
\end{equation*}
and we have proved the desired result.

(vi) This follows similarly as in (iii).
\end{proof}

%--------------------------------------------------------
%--------------------------------------------------------

Next, we prove that the approximations $h^\eps$ converge in a suitable sense.

\begin{lemma}[Convergence of approximations] \label{lem:convergence}
Let $\eps \in (0,1)$ be given and let $h^\eps$ be the corresponding solution to \eqref{eq:pde_eps} on $[0,T_\eps)$ with initial value $h_0 \in H^1(S^1)$.
There exist $\delta > 0$ sufficiently small and $T > 0$, both independent of $\eps$ such that, if $\|h_0 - \bar{h}_0\|_{H^1(S^1)} \leq \delta$, then $T_\eps > T$ and we may extract a subsequence $(h^\eps)_\eps$ (not relabelled) such that, as $\eps \searrow 0$,
% Then there exists a subsequence $(h^\eps)_\eps$ (not relabelled) such that, as $\eps \searrow 0$,
\begin{itemize}
	\item[(i)] $h^\eps \to h$ strongly in $C\bigl([0,T];C^{\rho}(S^1)\bigr)$; %\cap L_{\alpha+1}\bigl((0,T);C^{2+\sigma}(S^1)\bigr)$ for all $\rho \in [0,1/2)$ and $\sigma \in [0,\alpha/(\alpha+1))$;
	\item[(ii)] 
	$m_\eps(h^\eps) \psi_\eps\bigl(\eta_\eps\bigl(\de{}h\e{\eps}+\de{3}h\e{\eps}\bigr)\bigr) \rightharpoonup \Phi$ weakly in $L_\frac{\alpha+1}{\alpha}\bigl([0,T]\times S^1\bigr)$ for some limit function $\Phi$;
	\item[(iii)] $\partial_t h^\eps \rightharpoonup \partial_t h$ weakly in 		$L_\frac{\alpha+1}{\alpha}\bigl((0,T)\times (W^1_{\alpha+1}(S^1))'\bigr)$;
	\item[(iv)] $\eta_\eps(\de{}h\e{\eps}+\de{3}h\e{\eps}) \rightharpoonup (\de{}h+\de{3}h)$ weakly in $L_{\alpha+1}\bigl([0,T]\times S^1\bigr)$;
	\item[(v)] $\partial_t(\de{}h\e{\eps}) \rightharpoonup \partial_t\de{}h$ weakly in $L_\frac{\alpha+1}{\alpha}\bigl((0,T);\bigl(W^1_{\alpha+1,0}(S^1)\cap W^2_{\alpha+1}(S^1)\bigr)'\bigr)$.
\end{itemize}
\end{lemma}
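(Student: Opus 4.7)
The proof is a standard compactness argument built on the a priori bounds of Lemma \ref{lem:uniform_bounds}. The plan is: (a) extract weakly convergent subsequences in each reflexive target space via Banach--Alaoglu; (b) upgrade the convergence of $h^\eps$ itself to a strong convergence using an Aubin--Lions type argument; and (c) use this strong convergence, together with the symmetry of the mollifier and integration by parts, to identify the weak limits in (iii)--(v).

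For step (a), all target spaces listed in (ii)--(v) are reflexive Lebesgue or Bochner spaces with $L_p$-integrability for some $p \in (1,\infty)$, so the uniform bounds (ii)--(iv), (vi) of Lemma \ref{lem:uniform_bounds} yield, along a common subsequence (not relabelled), weak limits as stated. At this stage the limits in (iii), (iv), (v) are merely abstract elements of the respective spaces and are not yet identified with $\partial_t h$, $\partial_\theta h + \partial_\theta^3 h$, and $\partial_t \partial_\theta h$.

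For step (b), I combine Lemma \ref{lem:uniform_bounds}(i), which gives the uniform bound $(h^\eps)_\eps \subset L_\infty\bigl((0,T); H^1(S^1)\bigr)$, with Lemma \ref{lem:uniform_bounds}(iii), which gives $(\partial_t h^\eps)_\eps \subset L_{(\alpha+1)/\alpha}\bigl((0,T); (W^1_{\alpha+1}(S^1))'\bigr)$. The embedding $H^1(S^1) \cembed C^\rho(S^1)$ is compact for every $\rho \in [0,1/2)$, and $C^\rho(S^1)$ embeds continuously into $(W^1_{\alpha+1}(S^1))'$. Hence, an Aubin--Lions--Simon compactness argument yields, along a further subsequence, $h^\eps \to h$ strongly in $C\bigl([0,T]; C^\rho(S^1)\bigr)$, with $h(0,\cdot) = h_0$ and $h \in L_\infty\bigl((0,T); H^1(S^1)\bigr)$.

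Step (c) identifies the weak limits. For (iii), I test against $\phi \in C_c^\infty((0,T) \times S^1)$, integrate by parts in $t$, and pass to the limit using the strong convergence of $h^\eps$:
\begin{equation*}
\int_0^T \int_{S^1} \partial_t h^\eps\, \phi \, d\theta\, dt = -\int_0^T \int_{S^1} h^\eps\, \partial_t \phi \, d\theta\, dt \longrightarrow -\int_0^T \int_{S^1} h\, \partial_t \phi \, d\theta\, dt.
\end{equation*}
For (iv), the symmetry of $\rho_\eps$ gives $\int \eta_\eps(f)\, g\, d\theta = \int f\, \eta_\eps(g)\, d\theta$, and integration by parts combined with $\eta_\eps \phi \to \phi$ in $C^\infty$ and the strong convergence of $h^\eps$ yields, in the limit, the pairing with $\partial_\theta h + \partial_\theta^3 h$. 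Statement (v) is analogous. The one subtlety worth flagging is that the weak limit $\Phi$ in (ii) is \emph{not} identified here with the expected nonlinear expression $h^{\alpha+2}\psi(\partial_\theta h + \partial_\theta^3 h)$; that identification, which constitutes the genuine difficulty in the compactness scheme, relies on the monotonicity of $s \mapsto \psi(s)$ via Minty's trick and is deferred to the proof of Theorem \ref{thm:existence}.
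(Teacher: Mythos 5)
Your proposal is correct and follows essentially the same compactness scheme as the paper: uniform bounds plus a Simon/Aubin--Lions argument give (i), weak compactness in the reflexive target spaces gives (ii)--(v) along a subsequence, the limits in (iii)--(v) are identified through the strong convergence of $h^\eps$ by testing against smooth functions, and the identification of $\Phi$ is deferred to Minty's trick, exactly as in Lemma \ref{lem:limit_flux}. The only small deviation is in (iv): the paper identifies the limit by combining the uniform convergence $\eta_\eps h^\eps \to h$ with the weak convergence $\eta_\eps h^\eps \rightharpoonup \hat{h}$ in $L_{\alpha+1}\bigl((0,T);W^3_{\alpha+1}(S^1)\bigr)$ from Lemma \ref{lem:uniform_bounds}(v) and uniqueness of limits, which at the same time yields the regularity $h \in L_{\alpha+1}\bigl((0,T);W^3_{\alpha+1}(S^1)\bigr)$ needed later (e.g.\ to use $\varphi = h + \partial_\theta^2 h$ as a test function), whereas your duality/integration-by-parts argument proves (iv) but leaves that extra regularity to be recovered separately.
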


%--------------------------------------------------------
%--------------------------------------------------------

\begin{proof}
(i) In the previous Lemma \ref{lem:uniform_bounds} (i), (iii) we have proved that 
\begin{equation*}
	\begin{cases}
		(h^\eps)_\eps \text{ is uniformly bounded in } L_\infty\bigl((0,T);H^1(S^1)\bigr) %\cap L_{\alpha+1}\bigl((0,T);W^3_{\alpha+1}(S^1)\bigr),
		& \\
		(\partial_t h^\eps)_\eps \text{ is uniformly bounded in } L_\frac{\alpha+1}{\alpha}\bigl((0,T);(W^1_{\alpha+1}(S^1))'\bigr).&
	\end{cases}
\end{equation*}
Moreover, thanks to the Rellich-Kondrachov theorem, cf. for instance in \cite[Thm. 6.3]{adams_fournier}, we know that
\begin{equation*}
	H^1(S^1) \xhookrightarrow[]{c} C^{\rho}(S^1) \hookrightarrow (W^1_{\alpha+1}(S^1))', \quad \rho \in [0,1/2),
	%W^3_{\alpha+1}(S^1) &\xhookrightarrow[]{c} C^{2+\sigma}(S^1) \hookrightarrow (W^1_{\alpha+1}(S^1))', \quad \sigma \in [0,\alpha/(\alpha+1)),
\end{equation*}
where $\xhookrightarrow[]{c}$ indicates compactness of the embedding.
This allows us to invoke \cite[Cor. 4]{S:1987} in order to conclude that the sequence 
\begin{equation*}
		(h^\eps)_\eps \text{ is relatively compact in } C\bigl([0,T];C^\rho(S^1)\bigr) 
		%\cap L_{\alpha+1}\bigl((0,T);C^{2+\sigma}(S^1)\bigr),
\end{equation*}
with $\rho \in [0,1/2)$ as above.
		    
(ii) This is an immediate consequence of Lemma \ref{lem:uniform_bounds} (ii).
	
(iii) Thanks to Lemma \ref{lem:uniform_bounds} (iii), we may extract a subsequence $(\partial_t h^\eps)_\eps$ such that
\begin{equation*}
	\partial_t h^\eps \rightharpoonup v 
	\quad \text{weakly in } L_\frac{\alpha+1}{\alpha}\bigl((0,T);(W^1_{\alpha+1}(S^1))'\bigr) \hookrightarrow \Dcal'\bigl((0,T);(W^1_{\alpha+1}(S^1))'\bigr)
\end{equation*}
for some limit function $v \in L_\frac{\alpha+1}{\alpha}\bigl((0,T);(W^1_{\alpha+1}(S^1))'\bigr)$.
Since we know in addition that
\begin{equation*}
	h^\eps \longrightarrow h \quad \text{in }  C\bigl([0,T];C^\rho(S^1)\bigr) \hookrightarrow \Dcal'\bigl((0,T);(W^1_{\alpha+1}(S^1))'\bigr),
	\quad \rho \in [0,1/2),
\end{equation*}
we conclude that 
\begin{equation*}
	\partial_t h^\eps \longrightarrow \partial_t h
	\quad \text{in }
	\Dcal'\bigl((0,T);(W^1_{\alpha+1}(S^1))'\bigr)\bigr),
\end{equation*}
and consequently, $v = \partial_t h \in L_\frac{\alpha+1}{\alpha}\bigl((0,T);(W^1_{\alpha+1}(S^1))'\bigr)$.
	
(iv) The strong convergence $h^\eps \to h$ in $C\bigl([0,T];C^\rho(S^1)\bigr),\, \rho \in [0,1/2),$ in Lemma \ref{lem:convergence} (i) in particular implies uniform convergence $h^\eps \to h$ in $C\bigl([0,T]\times S^1\bigr)$ and then, by definition of the mollifier,
	\begin{equation} \label{eq:conv_mollifier_1}
	    \eta_\eps h^\eps \longrightarrow h
	    \quad
	    \text{in } C\bigl([0,T]\times S^1\bigr).
	\end{equation}
	Moreover, Lemma \ref{lem:uniform_bounds} (v) guarantees the existence of some $\hat{h} \in L_{\alpha+1}\bigl((0,T);W^3_{\alpha+1}(S^1)\bigr)$ such that
	\begin{equation} \label{eq:conv_mollifier_2}
	    \eta_\eps h^\eps \rightharpoonup \hat{h}
	    \quad
	    \text{in } L_{\alpha+1}\bigl((0,T);W^3_{\alpha+1}(S^1)\bigr).
	\end{equation}
    In virtue of the uniqueness of the limit function, \eqref{eq:conv_mollifier_1} and \eqref{eq:conv_mollifier_2} imply
	\begin{equation*}
	    \eta_\eps h^\eps \rightharpoonup h
	    \quad
	    \text{in } 
	    L_{\alpha+1}\bigl((0,T);W^3_{\alpha+1}(S^1)\bigr).
	\end{equation*}
	Thanks to the weak lower semicontinuity of the norm and Lemma \ref{lem:uniform_bounds} (iv), (v), we finally obtain
\begin{equation} \label{eq:lsc}
	\begin{cases}
	    \left\|\de{}h+\de{3}h\right\|_{L_{\alpha+1}((0,T)\times S^1)}
	    \leq
	    \liminf_{\eps \to 0} \left\|(\partial_\theta + \partial_\theta^3)(\eta_\eps h^\eps)\right\|_{L_{\alpha+1}((0,T)\times S^1)}
	    \leq 
	    C
	    & \\
	    \left\|h\right\|_{L_{\alpha+1}((0,T);W^3_{\alpha+1}(S^1))}
	    \leq
	    \liminf_{\eps \to 0} \left\|\eta_\eps h^\eps\right\|_{L_{\alpha+1}((0,T);W^3_{\alpha+1}(S^1))}
	    \leq 
	    C &
	\end{cases}
\end{equation}
for some positive generic constant $C > 0$ that does not depend on $\eps$. 

(v) This follows similarly as in (iii) and the proof is complete.
\end{proof}

%--------------------------------------------------------
%--------------------------------------------------------

It remains to prove the convergence of the nonlinear flux term 
%$\bigl(\left|h^\eps\right|^{\alpha+2} \psi_\eps(\de{}h\e{\eps}+\de{3}h\e{\eps})\bigr) \rightharpoonup \bigl(\left|h\right|^{\alpha+2} \psi(\de{}h+\de{3}h)\bigr)$ 
$m_\eps(h^\eps)\, \psi_\eps\bigl(\eta_\eps\bigl(\de{}h\e{\eps}+\de{3}h\e{\eps}\bigr)\bigr) \rightharpoonup \left|h\right|^{\alpha+2} \psi\bigl(\de{}h+\de{3}h\bigr)$
in $L_\frac{\alpha+1}{\alpha}\bigl((0,T)\times S^1\bigr)$. This is the content of the next lemma. The main idea of the proof is to use lower semicontinuity of the norm and to apply Minty's trick in order to be able to identify the nonlinear limit flux.

%--------------------------------------------------------
%--------------------------------------------------------

\begin{lemma}\label{lem:limit_flux}
%\textcolor{magenta}{
%Given $\eps \in (0,1)$, let $h^\eps$ be the positive solution to \eqref{eq:pde_eps} on $[0,T_\eps)$. Then there exists a subsequence $(h^\eps)_\eps$ (not relabelled) such that
Let $\eps \in (0,1)$ be given and let $h^\eps$ be the corresponding solution to \eqref{eq:pde_eps} on $[0,T_\eps)$ with initial value $h_0 \in H^1(S^1)$.
There exist $\delta > 0$ sufficiently small and $T > 0$, both independent of $\eps$ such that, if $\|h_0 - \bar{h}_0\|_{H^1(S^1)} \leq \delta$, then $T_\eps > T$ and we may extract a subsequence $(h^\eps)_\eps$ (not relabelled) such that
\begin{equation*}
	m_\eps(h^\eps) \psi_\eps\bigl(\eta_\eps\bigl(\partial_\theta h^\eps + \partial^3_\theta h^\eps\bigr)\bigr) \rightharpoonup 
	\left|h\right|^{\alpha+2} \psi\bigl(\de{}h+\de{3}h\bigr) 
	\quad
	\text{weakly in }
	L_\frac{\alpha+1}{\alpha}\bigl([0,T]\times S^1\bigr)
\end{equation*}
as $\eps \searrow 0$.
%for all $T \in [0,T_\eps)$, as $\eps \searrow 0$.
\end{lemma}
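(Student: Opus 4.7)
The plan is to identify the weak limit $\Phi$ from Lemma \ref{lem:convergence}(ii) with $|h|^{\alpha+2}\psi(\de{}h + \de{3}h)$ via a Minty-type monotonicity argument, exploiting that both $s\mapsto\psi_\eps(s)$ and $s\mapsto\psi(s)$ are strictly increasing. First I would pass to the limit $\eps\searrow 0$ in the weak formulation \eqref{eq:weak_solution_eps} using Lemma \ref{lem:convergence} to conclude that the limit $h$ satisfies $\partial_t h + \de{}\Phi=0$ in the sense of distributions. Next I would derive matching energy identities at both levels: at the $\eps$-level, testing \eqref{eq:pde_eps} with $h^\eps + \de{2}h^\eps$ gives (as in Lemma \ref{lem:dissipation}) a relation between $E[h_0]-E[h^\eps](T)$ and the dissipation $D_T^\eps[h^\eps] = \int_0^T\!\!\int_{S^1} m_\eps(h^\eps)\psi_\eps(\eta_\eps u^\eps)\eta_\eps u^\eps\, d\theta\, dt$ with $u^\eps := \de{}h^\eps+\de{3}h^\eps$; at the limit level, the regularity $h\in L_{\alpha+1}((0,T);W^3_{\alpha+1}(S^1))\cap L_\infty((0,T);H^1(S^1))$ from Lemma \ref{lem:convergence} and \eqref{eq:lsc} permits via density the admissible test choice $\varphi=h+\de{2}h$, yielding
\begin{equation*}
    E[h_0] - E[h](T) = \int_0^T \int_{S^1} \Phi(\de{}h + \de{3}h)\, d\theta\, dt.
\end{equation*}

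From the strong convergence $h^\eps\to h$ in $C([0,T];C^\rho(S^1))$ and the uniform $H^1$-bound (Lemma \ref{lem:H^1}), uniqueness of the weak limit yields $h^\eps(T)\rightharpoonup h(T)$ in $H^1(S^1)$. Weak lower semicontinuity of $\|\de{}\cdot\|_{L_2(S^1)}$ together with strong $L_2$-convergence of $h^\eps(T)\to h(T)$ gives $E[h](T)\leq\liminf_{\eps\to 0} E[h^\eps](T)$, and hence
\begin{equation*}
    \limsup_{\eps\to 0} D_T^\eps[h^\eps] \leq \int_0^T \int_{S^1} \Phi(\de{}h + \de{3}h)\, d\theta\, dt.
\end{equation*}
I then apply Minty's trick. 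For any smooth test function $\phi$, monotonicity of $\psi_\eps$ yields
\begin{equation*}
    0 \leq \int_0^T \int_{S^1} m_\eps(h^\eps)\bigl(\psi_\eps(\eta_\eps u^\eps) - \psi_\eps(\eta_\eps \phi)\bigr)\bigl(\eta_\eps u^\eps - \eta_\eps \phi\bigr)\, d\theta\, dt.
\end{equation*}
Expanding this into four terms and passing to the limit (using the previous limsup bound on the diagonal term; the weak convergences of Lemma \ref{lem:convergence}(ii),(iv); the uniform convergence $m_\eps(h^\eps)\to |h|^{\alpha+2}$ implied by \eqref{eq:def_m_eps} together with the lower bound from Lemma \ref{lem:bound_from_zero}; and the strong $L_{(\alpha+1)/\alpha}$-convergence $\psi_\eps(\eta_\eps\phi)\to\psi(\phi)$ for fixed smooth $\phi$ via dominated convergence) I arrive at
\begin{equation*}
    \int_0^T \int_{S^1} \bigl(\Phi - |h|^{\alpha+2}\psi(\phi)\bigr)\bigl(\de{}h + \de{3}h - \phi\bigr)\, d\theta\, dt \geq 0.
\end{equation*}
Substituting $\phi = \de{}h + \de{3}h + \lambda\xi$ with smooth $\xi$ and $\lambda \in \R$, dividing by $\lambda$ and letting $\lambda\to 0^\pm$ (using continuity of $\psi$) yields $\int_0^T\!\int_{S^1}(\Phi - |h|^{\alpha+2}\psi(\de{}h+\de{3}h))\xi\, d\theta\, dt = 0$ for arbitrary smooth $\xi$, whence $\Phi = |h|^{\alpha+2}\psi(\de{}h + \de{3}h)$ almost everywhere.

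The main obstacle is the rigorous justification of the energy identity for the limit $h$: since the admissible class of test functions in the weak formulation of the limit equation is $L_{\alpha+1}((0,T);W^1_{\alpha+1}(S^1))$, a density argument combined with the full regularity of $h$ (not just $\partial_t h \in L_{(\alpha+1)/\alpha}((0,T);(W^1_{\alpha+1}(S^1))')$, but also $h \in L_{\alpha+1}((0,T);W^3_{\alpha+1}(S^1))\cap L_\infty((0,T);H^1(S^1))$) is needed to admit $\varphi = h + \de{2}h$ and to integrate by parts in space; a parallel time-regularity argument ensures the pointwise value $E[h](T)$ is well-defined. A secondary but delicate point is the final Minty substitution, for which one must extend the inequality from smooth $\phi$ to general $\phi \in L_{\alpha+1}((0,T)\times S^1)$ by continuity of the Nemytskii operator $\phi \mapsto |h|^{\alpha+2}\psi(\phi)$ in $L_{(\alpha+1)/\alpha}$.
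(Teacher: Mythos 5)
Your proposal is correct and follows essentially the same route as the paper: pass to the limit in the weak formulation, establish the energy identity $E[h](t)+\langle \Phi \,|\, \partial_\theta h+\partial_\theta^3 h\rangle = E[h_0]$ by (in effect) testing with $h+\partial_\theta^2 h$, and identify $\Phi$ via Minty's monotonicity trick using the strong convergence $m_\eps(h^\eps)\to |h|^{\alpha+2}$ and the weak convergences of the flux and of $\eta_\eps(\partial_\theta h^\eps+\partial_\theta^3 h^\eps)$. The only minor deviations — bounding $\limsup_{\eps\to 0} D_T^\eps[h^\eps]$ by weak lower semicontinuity of the energy rather than invoking, as the paper does, strong $H^1$-convergence of $h^\eps(t)$ for a.e.\ $t$, and performing the final Minty substitution $\phi=\partial_\theta h+\partial_\theta^3 h+\lambda\xi$ in $L_{\alpha+1}$ instead of the paper's choice $\phi=h\mp\lambda v$ with $v\in W^3_{\alpha+1}$ — are harmless variants (the first, if anything, being the more careful one, since Minty's argument only needs the one-sided bound).
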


%--------------------------------------------------------
%--------------------------------------------------------

\begin{proof}
We divide the proof in several steps. For convenience, we pass to a subsequence where necessary without explicitly mentioning.
	%\begin{equation*}
	    %\Phi[u](v) = u^{\alpha+2} \psi(v).
	%\end{equation*}
	
(i) 
First, in virtue of Lemma \ref{lem:convergence} (ii), we know that $m_\eps(h^\eps)\, \psi_\eps\bigl(\eta_\eps\bigl(\partial_\theta h^\eps + \partial^3_\theta h^\eps\bigr)\bigr)$ is weakly sequentially compact, i.e. there is an element $\Phi \in L_\frac{\alpha+1}{\alpha}\bigl((0,T)\times S^1)\bigr)$ such that
\begin{equation*}
%	\left|h^\eps\right|^{\alpha+2} \psi\bigl((\partial_\theta + \partial^3_\theta) (\eta_\eps h^\eps)\bigr) 
	m_\eps(h^\eps) \psi_\eps\bigl(\eta_\eps\bigl(\partial_\theta h^\eps + \partial^3_\theta h^\eps\bigr)\bigr)
	\rightharpoonup \Phi
    \quad
	\text{weakly in } 
	 L_\frac{\alpha+1}{\alpha}\bigl((0,T)\times S^1)\bigr).
\end{equation*}
It remains to identify the limit flux $\Phi$.
	
(ii) Next, we prove that $h$ is bounded in $C\bigl([0,T];H^1(S^1)\bigr)$.
We already know from Lemma \ref{lem:convergence} (i) that
\begin{equation*}
	h \in C\bigl([0,T];C^\rho(S^1)\bigr) \hookrightarrow C\bigl([0,T];L_2(S^1)\bigr).
\end{equation*}
    Moreover,
    \begin{equation*}
	    \de{}h \in L_{\alpha+1}\bigl((0,T);W^1_{\alpha+1,0}(S^1) \cap W^2_{\alpha+1}(S^1)\bigr) \quad \text{and} \quad
	    \partial_t\de{}h\in L_\frac{\alpha+1}{\alpha}\bigl((0,T);\bigl(W^1_{\alpha+1,0}(S^1)\cap W^2_{\alpha+1}(S^1)\bigr)'\bigr)
    \end{equation*}
	thanks to Lemma \ref{lem:convergence} (iv) and (v) and lower semicontinuity of the norm. Using \cite[Remark 3.4]{Bernis:1988}, this implies that $\partial_\theta h \in C\bigl([0,T];L_2(S^1)\bigr)$. Consequently, $h \in C\bigl([0,T];H^1(S^1)\bigr)$.
	
(iii) In view of the previous steps we may choose $\varphi = h + \de{2}h \in L_{\alpha+1}\bigl((0,T);W^1_{\alpha+1}(S^1)\bigr)$ as a test function in the weak formulation \eqref{eq:weak_solution_eps} for $h^\eps$. This yields
\begin{equation*}
%	\int_0^T \int_{S^1} \partial_t h^\eps (h + \de{2}h)\, d\theta\, dt + \int_0^T \int_{S^1} \left|h^\eps\right|^{\alpha+2} \psi\left((\partial_\theta + \partial_\theta^3) \eta_\eps(h^\eps)\right) \eta_\eps\bigl(\de{}h+\de{3}h\bigr)\, d\theta\, dt 
	\int_0^T \int_{S^1} \partial_t h^\eps (h + \de{2}h)\, d\theta\, dt + \int_0^T \int_{S^1} m_\eps(h^\eps) \psi_\eps\left(\eta_\eps\bigl(\partial_\theta h^\eps + \partial_\theta^3 h^\eps\bigr)\right) \eta_\eps\bigl(\de{}h+\de{3}h\bigr)\, d\theta\, dt
	= 0.
\end{equation*}
As $\eps \searrow 0$, the first term satisfies
\begin{equation*}
	\int_0^T \int_{S^1} \partial_t h^\eps (h + \de{2}h)\, d\theta\, dt \longrightarrow 
	\int_0^T \int_{S^1} \partial_t h\, (h + \de{2}h)\, d\theta\, dt
	=
	E[h](T) - E[h](0),
\end{equation*}
where we have used that the limit function satisfies $h \in C\bigl([0,T];H^1(S^1)\bigr)$.
Moreover, since $(\de{}h+\de{3}h)$ is bounded in $L_{\alpha+1}\bigl((0,T)\times S^1\bigr)$ by \eqref{eq:lsc}, the definition of the mollifier yields the strong convergence
\begin{equation*}
	\eta_\eps\bigl(\de{}h+\de{3}h\bigr)
	\longrightarrow \de{}h+\de{3}h
	\quad \text{in } L_{\alpha+1}\bigl((0,T)\times S^1\bigr).
\end{equation*}
Together with Lemma \ref{lem:convergence} (ii) this implies
\begin{equation*}
	\int_0^T \int_{S^1} 
	m_\eps(h^\eps)\, \psi_\eps\left(\eta_\eps\bigl(\partial_\theta h^\eps + \partial_\theta^3 h^\eps\bigr)\right) \eta_\eps\bigl(\de{}h+\de{3}h\bigr)\, d\theta\, dt
	\longrightarrow 
	\int_0^T \int_{S^1} \Phi \bigl(\de{}h+\de{3}h\bigr)\, d\theta\, dt,
\end{equation*}	
and consequently, we obtain the identity
\begin{equation*}
	E[h](t) + \left\langle \Phi | \de{}h+\de{3}h \right\rangle
	=
	E[h_0]
\end{equation*}
for almost every $t \in [0,T]$.
	
(iv) Monotonicity and identification of the limit flux $\Phi$ by Minty's trick. Observe that the operator 
\begin{equation*}
	\begin{cases}
	    \psi_\eps : L_{\alpha+1}\bigl((0,T)\times S^1\bigr) \longrightarrow L_{\frac{\alpha+1}{\alpha}}\bigl((0,T)\times S^1\bigr), & \\
    	\psi_\eps(u) = \left(|u|^2 + \eps^2\right)^\frac{\alpha-1}{2} u &
    \end{cases}
\end{equation*}
is monotone, i.e. we have
\begin{equation*}
	\langle \psi_\eps(u) - \psi_\eps(v) | u-v\rangle_{L_{\alpha+1}}
	=
	\int_0^T \int_{S^1} \bigl(\psi_\eps(u) - \psi_\eps(v)\bigr) (u-v)\, 	d\theta\, dt
	> 0
\end{equation*}
for all $u, v \in L_{\alpha+1}\bigl([0,T]\times S^1\bigr)$ with $u\neq v$. This follows immediately from the monotonicity of the function $\psi_\eps\colon \R \to \R,\ s\mapsto (s^2 + \eps^2)^\frac{\alpha-1}{2}s$.
Let now $\phi \in W^3_{\alpha+1}\bigl((0,T)\times S^1\bigr)$. For better readability, henceforth we simply write $\langle u | v\rangle$ for the dual pairing $\langle u |v \rangle_{L_{\alpha+1}((0,T)\times S^1)}$ between $u \in L_\frac{\alpha+1}{\alpha}\bigl((0,T)\times S^1\bigr)$ and $v \in L_{\alpha+1}\bigl((0,T)\times S^1\bigr)$. Thanks to the monotonicity of $\psi_\eps$ we have 
\begin{align*}
	0
	&\leq
	\left\langle m_\eps(h^\eps) \psi_\eps\bigl(\eta_\eps\bigl(\partial_\theta h^\eps + \partial_\theta^3 h^\eps\bigr)\bigr) - m_\eps(h^\eps) \psi_\eps \bigl(\partial_\theta \phi + \partial_\theta^3 \phi\bigr) | (\partial_\theta + \partial_\theta^3) (\eta_\eps h^\eps - \phi)
	\right\rangle
	\\
	&=	
	\left\langle 
	m_\eps(h^\eps) \psi_\eps\bigl(\eta_\eps\bigl(\partial_\theta h^\eps + \partial_\theta^3 h^\eps\bigr)\bigr) | \eta_\eps\bigl(\partial_\theta h^\eps + \partial_\theta^3 h^\eps\bigr) \right\rangle
	-
	\left\langle m_\eps(h^\eps) \psi_\eps\bigl(\eta_\eps\bigl(\partial_\theta h^\eps + \partial_\theta^3 h^\eps\bigr)\bigr) | \partial_\theta \phi + \partial_\theta^3 \phi \right\rangle
	\\
	&\quad
	-
	\left\langle m_\eps(h^\eps) \psi_\eps\bigl(\partial_\theta \phi + \partial_\theta^3 \phi\bigr) | \eta_\eps\bigl(\partial_\theta h^\eps + \partial_\theta^3 h^\eps) \right\rangle
	+
	\left\langle m_\eps(h^\eps) \psi_\eps\bigl(\partial_\theta \phi + \partial_\theta^3 \phi\bigr) | \partial_\theta \phi + \partial_\theta^3 \phi \right\rangle.
\end{align*}
Let us consider the different pairings on the right-hand side separately.
	
First, we have proved in Lemma \ref{lem:dissipation} that $h^\eps$ satisfies the energy dissipation formula for the problem \eqref{eq:pde_eps}. We rewrite it here as
\begin{equation*}
	\left\langle m_\eps(h^\eps) \psi_\eps\bigl(\eta_\eps\bigl(\partial_\theta h^\eps + \partial_\theta^3 h^\eps\bigr)\bigr) | \eta_\eps\bigl(\partial_\theta h^\eps + \partial_\theta^3 h^\eps\bigr) 
	\right\rangle
	= 
	E[h_0] - E[h^\eps](t),\quad\text{for a.e. $t \in [0,T]$.}
\end{equation*}
Thanks to Lemma \ref{lem:convergence} (i) we know that $h^\eps(t) \to h(t)$ in $H^1(S^1)$ for almost every $t \in [0,T]$. Thus, as $\eps$ tends to zero, we find that
\begin{equation} \label{eq:Minty_1}
	\left\langle m_\eps(h^\eps) \psi_\eps\bigl(\eta_\eps\bigl(\partial_\theta h^\eps + \partial_\theta^3 h^\eps\bigr)\bigr) | \eta_\eps\bigl(\partial_\theta h^\eps + \partial_\theta^3 h^\eps\bigr) 
	\right\rangle
	\longrightarrow
	E[h_0] - E[h](t)
\end{equation}
for almost every $t \in [0,T]$. Moreover, Lemma \ref{lem:convergence} (ii) yields
\begin{equation}\label{eq:Minty_2}
	\left\langle m_\eps(h^\eps) \psi_\eps\bigl(\eta_\eps\bigl(\partial_\theta h^\eps + \partial_\theta^3 h^\eps\bigr)\bigr) | \partial_\theta \phi + \partial_\theta^3 \phi\bigr) 
	\right\rangle
	\longrightarrow
	\left\langle \Phi | \partial_\theta \phi + \partial_\theta^3 \phi \right\rangle
	\quad
	\text{as } \eps \searrow 0.
\end{equation}
For the third pairing we use that
\begin{equation*}
	\begin{cases}
		m_\eps(h^\eps) \longrightarrow h^{\alpha+2} \quad \text{strongly in } C\bigl([0,T]\times S^1\bigr) &
		\\
		\eta_\eps\bigl(\partial_\theta h^\eps + \partial_\theta^3 h^\eps\bigr) 
		\xrightharpoonup[\phantom{wea}]{}
		\partial_\theta h + \partial_\theta^3 h
		\quad \text{weakly in } L_{\alpha+1}\bigl([0,T]\times S^1\bigr), &
	\end{cases}
\end{equation*}
thanks to Lemma \ref{lem:convergence} (i), respectively Lemma \ref{lem:convergence} (iv). Here we also use the fact that $m_\eps(h^\eps) = |h^\eps|^{\alpha+2}$ in the range of values attained by $h^\eps$ for times $t \in [0,T]$, cf. \eqref{eq:def_m_eps}.
This implies
\begin{equation}\label{eq:Minty_3}
	\left\langle m_\eps(h^\eps) \psi_\eps\bigl(\partial_\theta \phi + \partial_\theta^3 \phi\bigr)\bigr) | \eta_\eps\bigl(\partial_\theta h^\eps + \partial_\theta^3 h^\eps\bigr) 
	\right\rangle
	\longrightarrow
	\left\langle \left|h\right|^{\alpha+2} \psi \bigl(\partial_\theta \phi + \partial_\theta^3 \phi\bigr) | \partial_\theta h + \partial_\theta^3 h \right\rangle.	
\end{equation}
Finally, for the fourth term, we obtain
\begin{equation}\label{eq:Minty_4}
	\left\langle m_\eps(h^\eps) \psi_\eps\bigl(\partial_\theta \phi + \partial_\theta^3 \phi\bigr) | \partial_\theta \phi + \partial_\theta^3 \phi \right\rangle
	\longrightarrow 
	\left\langle |h|^{\alpha+2} \psi\bigl(\partial_\theta \phi + \partial_\theta^3 \phi\bigr) | \partial_\theta \phi + \partial_\theta^3 \phi \right\rangle,
\end{equation} 
where we use again the convergence $m_\eps(h^\eps) \rightarrow h^{\alpha+2}$ strongly in $C\bigl([0,T]\times S^1\bigr)$.
Combining \eqref{eq:Minty_1}--\eqref{eq:Minty_4} leads to the inequality
\begin{equation*}
	0 \leq
	E[h_0] - E[h](t)
	-
	\left\langle \Phi | \partial_\theta \phi + \partial_\theta^3 \phi \right\rangle
	-
	\left\langle \left|h\right|^{\alpha+2} \psi\bigl(\partial_\theta \phi + \partial_\theta^3 \phi\bigr) | (\partial_\theta + \partial_\theta^3) (h - \phi) \right\rangle
\end{equation*}
Thus, using the identity 
\begin{equation*}
	E[h](t) + \left\langle \Phi | \partial_\theta h + \partial_\theta^3 h \right\rangle
	=
	E[h_0],
\end{equation*}
proved in step (iii), for almost every $t \in [0,T]$, we discover that
\begin{equation*}
	0 
	\leq
	\left\langle \Phi - \left|h\right|^{\alpha+2} \psi\bigl(\partial_\theta \phi + \partial_\theta^3 \phi\bigr) | (\partial_\theta + \partial_\theta^3) (h - \phi) \right\rangle.
\end{equation*}
By choosing $\phi = h - \lambda v$ for some arbitrary $v \in W^3_{\alpha+1}\bigl((0,T)\times S^1\bigr)$ and $\lambda > 0$, we obtain the inequality
\begin{equation*}
	\left\langle \Phi - \left|h\right|^{\alpha+2} \psi\bigl((\partial_\theta + \partial_\theta^3) (h - \lambda v)\bigr) | \partial_\theta v + \partial_\theta^3 v \right\rangle
	\geq 0
\end{equation*}
and hence, in the limit $\lambda \searrow 0$:
\begin{equation*}
	\left\langle \Phi - \left|h\right|^{\alpha+2} \psi\bigl(\partial_\theta h + \partial_\theta^3 h\bigr) | \partial_\theta v + \partial_\theta^3 v \right\rangle
	\geq 0,
	\quad
	v \in W^3_{\alpha+1}\bigl((0,T)\times S^1\bigr),
\end{equation*}	
for almost every $t \in [0,T]$. Similarly, choosing $\phi = h + \lambda v$, we discover that
\begin{equation*}
	\left\langle \Phi - \left|h\right|^{\alpha+2} \psi\bigl(\partial_\theta h + \partial_\theta^3 h\bigr) | \partial_\theta v + \partial_\theta^3 v \right\rangle
	\leq 0,
	\quad
	v \in W^3_{\alpha+1}\bigl((0,T)\times S^1\bigr),
\end{equation*}
and consequently	
\begin{equation*}
	\left\langle \Phi - \left|h\right|^{\alpha+2} \psi\bigl(\partial_\theta h + \partial_\theta^3 h\bigr) | \partial_\theta v + \partial_\theta^3 v \right\rangle
	= 0,
	\quad
	v \in W^3_{\alpha+1}\bigl((0,T)\times S^1\bigr).
\end{equation*}
Since $v \in W^3_{\alpha+1}\bigl((0,T)\times S^1\bigr)$ is arbitrary, we are finally able to identify
\begin{equation*}
	\Phi 
	=
	\left|h\right|^{\alpha+2} \psi\bigl(\partial_\theta h + \partial_\theta^3 h\bigr)
	\in
	L_\frac{\alpha+1}{\alpha}\bigl((0,T)\times S^1\bigr).
\end{equation*}
This completes the proof.
\end{proof}
%--------------------------------------------------------
%--------------------------------------------------------
\medskip

With the previous convergence results at hand, we are now able to prove Theorem \ref{thm:existence}.

%--------------------------------------------------------
%--------------------------------------------------------
\medskip

\begin{proof}[\textbf{Proof of Theorem \ref{thm:existence}}]
(i) 
It follows from Lemma \ref{lem:convergence} (iii) and (iv) and step (ii) of the proof of Lemma \ref{lem:limit_flux} that 
\begin{equation*}
    h \in C\bigl([0,T];H^1(S^1)\bigr) \cap L_{\alpha+1}\bigl((0,T);W^3_{\alpha+1}(S^1)\bigr)\quad\text{and}\quad \partial_t h \in L_\frac{\alpha+1}{\alpha}\bigl((0,T);(W^1_{\alpha+1}(S^1))'\bigr).
\end{equation*}

(ii) We now prove that $h$ complies with the integral equation in Theorem \ref{thm:existence} (ii). To this end, recall that 
\begin{equation*}\label{eq:weak_solution_eps}
    \int_0^T \langle \partial_t h^\eps(t),\varphi(t)\rangle_{W^1_{\alpha+1}(S^1)}\, dt
	=
	\int_0^T \int_{S^1} \eta_\eps \left[m_\eps(h^\eps) \psi_\eps\left(\eta_\eps\bigl(\de{}h\e{\eps}+\de{3}h\e{\eps}\bigr)\right)\right]\, \de{}\phi\,
	d\theta\, dt
\end{equation*}
for all test functions $\phi \in L_{\alpha+1}\bigl((0,T);W^1_{\alpha+1}(S^1)\bigr)$. Since $\de{}\phi \in L_{\alpha+1}\bigl((0,T)\times S^1)$, we may invoke Lemma \ref{lem:limit_flux} to deduce that, on the one hand, 
\begin{equation*}
    \int_0^T \langle \partial_t h^\eps(t),\varphi(t)\rangle_{W^1_{\alpha+1}(S^1)}\, dt \longrightarrow 
    \int_0^T \int_{S^1} h^{\alpha+2} \psi\bigl(\de{}h+\de{3}h\bigr)\, d\theta\, dt.
\end{equation*}
On the other hand, Lemma \ref{lem:convergence} (iii) implies that
\begin{equation*}
    \int_0^T \langle \partial_t h^\eps(t),\varphi(t)\rangle_{W^1_{\alpha+1}(S^1)}\, dt \longrightarrow \int_0^T \langle \partial_t h(t),\varphi(t)\rangle_{W^1_{\alpha+1}(S^1)}\, dt.
\end{equation*}
Combining both, we find that $h$ satisfies the desired integral identity
\begin{equation*}
    \int_0^T \langle \partial_t h(t),\varphi(t)\rangle_{W^1_{\alpha+1}(S^1)}\, dt = \int_0^T \int_{S^1} h^{\alpha+2} \psi\bigl(\de{}h+\de{3}h\bigr)\, d\theta\, dt
\end{equation*}
for all $\phi \in L_{\alpha+1}\bigl((0,T);W^1_{\alpha+1}(S^1)\bigr)$.

(iii) That the initial condition is satisfied is clear since we chose them identically in \eqref{eq:pde} and \eqref{eq:pde_eps}.

(iv) That a solution conserves its mass follows from the convergence $h^\eps \to h$ in $C\bigl([0,T];C^\rho(S^1)\bigr)$, cf. Lemma \ref{lem:convergence} (i), and from the conservation of mass property 
\begin{equation*}
    \int_{S^1} h^\eps(t,\theta)\, d\theta = \int_{S^1} h_0(\theta)\, d\theta, \quad t \in [0,T],
\end{equation*}
for the approximations $h^\eps$.

(v) That the solution satisfies the energy equality has already been proved in step (iii) of the proof of Lemma \ref{lem:limit_flux}.

(vi) To see that the solution is bounded away from zero for short times, we just argue as in the proof of Lemma \ref{lem:H^1} and Lemma \ref{lem:bound_from_zero}. 
\end{proof}

%--------------------------------------------------------
%--------------------------------------------------------
%--------------------------------------------------------
%--------------------------------------------------------
%--------------------------------------------------------
%--------------------------------------------------------
\medskip

\subsection{Global existence and convergence to a circle in finite time in the shear-thickening regime} \label{sec:global_existence}

In this section we study the setting in which the thin film next to the internal cylinder is occupied by a shear-thickening fluid. This corresponds to the regime of flow-behaviour exponents $\alpha < 1$. We consider solutions emerging from initial values that are close to a circle. For these solutions we show that they converge to a circle in finite time. This circle does not necessarily have to be centered at the origin. Note that this behaviour clearly differs from the Newtonian case (c.f. \eqref{eq:intro_Newtonian}).

The main result of this section is stated in the following theorem.

%--------------------------------------------------------
%--------------------------------------------------------

\begin{theorem}\label{thm:global_ex}
Let $\alpha < 1$. There exists a $\delta > 0$ such that for all initial values $h_0 \in H^1(S^1)$ with $\left\|h_0 - \bar{h}_0\right\|_{H^1(S^1)} < \delta$, there is a weak solution $h \in C\bigl([0,\infty);H^1(S^1)\bigr) \cap L_{\alpha+1,\text{loc}}\bigl((0,\infty);W^3_{\alpha+1}(S^1)\bigr)$ that exists globally in time. Moreover, there exists a time $0 < t^\ast < \infty$ such that
\begin{equation*}
    h(t,\theta) = \bar{h}_0 + v(\theta)
    \quad
    \text{with}
    \quad
    v(\theta) = v_{-1}(t^\ast) e^{-i\theta} + v_1(t^\ast) e^{i\theta}, \quad t \geq t^\ast,\ \theta \in S^1,
\end{equation*}
Here $v_{\pm 1}$ denote the Fourier coefficients corresponding to the Fourier modes $n=\pm 1$ that are constant for times $t \geq t^\ast$. 
\end{theorem}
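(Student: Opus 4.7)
The plan is to combine the local existence result of Theorem~\ref{thm:existence} with a differential inequality for a suitable energy functional that forces extinction of all Fourier modes $|n|\geq 2$ in finite time, and then to observe that the remaining configuration is a stationary solution. Setting $v:=h-\bar{h}_0$ (which has zero mean by Lemma~\ref{lem:cons_mass}), I decompose $v = v^{(1)}+v^{(\perp)}$, where $v^{(1)}$ is the spectral projection onto $\spn\{e^{\pm i\theta}\}$ and $v^{(\perp)}$ collects the Fourier modes with $|n|\geq 2$. A direct Fourier computation (as in the proof of Lemma~\ref{lem:dissipation}) gives
\[
\mathcal{E}[h] \;:=\; E[h]+\pi\bar{h}_0^2 \;=\; \pi\sum_{|n|\geq 2}(n^2-1)|v_n|^2 \;\geq\; 0,
\]
which is equivalent to $\|v^{(\perp)}\|_{H^1(S^1)}^2$, satisfies $\tfrac{d}{dt}\mathcal{E}[h]=\tfrac{d}{dt}E[h]$, and vanishes precisely when $v^{(\perp)}\equiv 0$. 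Writing $w:=\partial_\theta h+\partial_\theta^3 h = Lv^{(\perp)}$ with $L:=(I+\partial_\theta^2)\partial_\theta$, the operator $L$ annihilates $\spn\{1,e^{\pm i\theta}\}$ and has Fourier symbol $in(1-n^2)$ with $|L_n|\geq 6$ on the range of interest, so $L^{-1}$ gains three derivatives on that subspace.

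The crucial step is the functional inequality
\[
\mathcal{E}[h] \;\leq\; C\,\Bigl(\int_{S^1}|w|^{\alpha+1}\,d\theta\Bigr)^{2/(\alpha+1)}.
\]
To prove this I would chain three elementary estimates: (i) $\|v^{(\perp)}\|_{H^1(S^1)}^2 \leq C\|w\|_{H^{-1}(S^1)}^2$, which reduces to the pointwise Fourier inequality $(1+n^2)^2 \leq C\,n^2(n^2-1)^2$ for $|n|\geq 2$; (ii) $\|w\|_{H^{-1}(S^1)}\leq C\|w\|_{L^1(S^1)}$, by combining $|w_n|\leq (2\pi)^{-1}\|w\|_{L^1(S^1)}$ with $\sum(1+n^2)^{-1}<\infty$; and (iii) $\|w\|_{L^1(S^1)} \leq C\|w\|_{L^{\alpha+1}(S^1)}$ via H\"older on the bounded domain $S^1$. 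Together with the energy dissipation identity of Theorem~\ref{thm:existence}(v) and the pointwise bound $h\geq \bar{h}_0/2$ (valid while $\|h-\bar{h}_0\|_{H^1(S^1)}$ stays small, by $H^1\hookrightarrow L^\infty$ in one dimension), this yields
\[
\frac{d}{dt}\mathcal{E}[h] \;\leq\; -c\,\mathcal{E}[h]^{(\alpha+1)/2}.
\]
Since $\alpha<1$, the exponent $\gamma:=(\alpha+1)/2$ is strictly less than $1$, so integration of this ODE inequality gives $\mathcal{E}[h](t)\equiv 0$ for all $t\geq t^* := \mathcal{E}[h_0]^{1-\gamma}/\bigl(c(1-\gamma)\bigr)$, and hence $h(t^*,\theta) = \bar{h}_0+v_1(t^*)e^{i\theta}+v_{-1}(t^*)e^{-i\theta}$.

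For such a profile one has $w\equiv 0$, so the flux $h^{\alpha+2}|w|^{\alpha-1}w$ vanishes identically and the constant-in-time extension is a weak solution of \eqref{eq:pde} on $[t^*,\infty)$; this glues with the local solution of Theorem~\ref{thm:existence} to produce the global weak solution. It remains to verify that the local solution can indeed be extended up to $t^*$, i.e.\ that $\|h(t)-\bar{h}_0\|_{H^1(S^1)}$ stays below the smallness threshold of Lemma~\ref{lem:uniform_bounds} on $[0,t^*]$. I would control the first Fourier modes through
\[
|\dot{a}_{\pm 1}(t)|\;\leq\; C\int_{S^1}h^{\alpha+2}|w|^\alpha\,d\theta \;\leq\; C\,\Bigl(-\tfrac{d}{dt}\mathcal{E}[h]\Bigr)^{\alpha/(\alpha+1)}
\]
(using H\"older together with the energy dissipation), which integrated in time and combined with H\"older yields $|a_{\pm 1}(t)-a_{\pm 1}(0)|\leq C\,\mathcal{E}[h_0]^{\alpha/(\alpha+1)}(t^*)^{1/(\alpha+1)}\leq C\delta$ when $\|h_0-\bar{h}_0\|_{H^1(S^1)}\leq \delta$ is chosen small enough. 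Combined with $\|v^{(\perp)}(t)\|_{H^1(S^1)}\leq C\mathcal{E}[h_0]^{1/2}\leq C\delta$, this keeps $\|h(t)-\bar{h}_0\|_{H^1(S^1)}$ uniformly small on $[0,t^*]$, so the local solution can be iteratively extended. I expect the main obstacle to be the derivation of the functional inequality above with precisely the exponent $2/(\alpha+1)$: any weaker bound would only yield exponential decay of $\mathcal{E}$, not finite-time extinction, so it is essential to simultaneously exploit the three-derivative smoothing of $L^{-1}$ on the high-mode subspace and the compactness of $S^1$, which permit the $L^{\alpha+1}\to L^1\to H^{-1}$ chain.
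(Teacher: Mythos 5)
Your proposal is correct and follows essentially the same strategy as the paper: the finite-time extinction is obtained from the differential inequality $\frac{d}{dt}E[v]\leq -C\,E[v]^{(\alpha+1)/2}$ (the paper's Lemma \ref{lem:diff_inequ}), followed by identification of the limit profile with a circle, extension by the time-independent circle (whose flux vanishes since $\partial_\theta h+\partial_\theta^3 h\equiv 0$), and control of the modes $n=0,\pm 1$ to stay in the small $H^1$-neighbourhood. The only deviations are cosmetic: you derive the key inequality $E\leq C\|\partial_\theta h+\partial_\theta^3 h\|_{L_{\alpha+1}}^2$ via the Fourier $L_{\alpha+1}\to L_1\to H^{-1}$ chain instead of the paper's representation of the inverse operator as convolution with a $C^1$ kernel, and you reach $t^\ast$ by a bootstrap/continuation argument rather than the paper's choice of $\delta$ small enough that $t^\ast<T$; both variants are sound.
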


%--------------------------------------------------------
%--------------------------------------------------------
\medskip

The idea of the proof is to derive a differential inequality for the energy $E$ that guarantees that the energy drops down to zero in finite time. Using Fourier analysis, we may then prove that solutions with zero average and zero energy are necessarily given by a circle.  
To perform the proof, let first \begin{equation*}
     h_0(\theta) = \bar{h}_0 + v_0(\theta),
     \quad
     \theta \in S^1.
\end{equation*} 
Then, the condition $\|h_0 - \bar{h}_0\|_{H^1(S^1)} < \delta$ for some $\delta > 0$ is equivalent to the condition
$\left\|v_0\right\|_{H^1(S^1)} < \delta$
for some $\delta > 0$.

Thanks to Theorem \ref{thm:existence} there exists a time $T > 0$ and a positive weak solution $h \in C\bigl([0,T];H^1(S^1)\bigr) \cap L_{\alpha+1}\bigl((0,T);W^3_{\alpha+1}(S^1)\bigr)$ on $[0,T]$. We write this solution in terms of its Fourier series as
\begin{equation*}
    h(t,\theta) = c + v(t,\theta)
    \quad \text{with} \quad
    v(t,\theta) = \sum_{n\in \Z, n \neq 0} v_n(t) e^{in\theta},
    \quad \theta \in S^1,
\end{equation*}
for all $t \in (0,T]$,
where the constant $c$ is given by the average 
\begin{equation*}
    c = \frac{1}{2\pi} \int_{S^1} h(t,\theta)\, d\theta.
\end{equation*} 
Then it clearly holds that $\bar{v}=0$, i.e. $v = h-c$ has average zero.

%--------------------------------------------------------
%--------------------------------------------------------
\medskip

\begin{lemma}\label{lem:diff_inequ}
Let $\alpha < 1$. For all $v \in C\bigl([0,T];H^1(S^1)\bigr)$ with $\bar{v} = \frac{1}{2\pi} \int_{S^1} v(t,\theta)\, d\theta = 0$, the energy $E[v](t)$ satisfies the differential inequality
\begin{equation*}
    \frac{d}{dt} E[v](t) \leq -C \bigl(E[v](t)\bigr)^\frac{\alpha+1}{2},
    \quad 
    t \in [0,T].
\end{equation*}
\end{lemma}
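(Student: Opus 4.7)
Plan. The strategy is to identify $\frac{d}{dt}E[v]$ with the dissipation term, bound the fluid height $h$ pointwise away from zero by exploiting the closeness-to-a-circle hypothesis, and then establish a Poincaré-type inequality of the form $E[v]^{(\alpha+1)/2}\leq C\,\|\partial_\theta v + \partial_\theta^3 v\|_{L_{\alpha+1}(S^1)}^{\alpha+1}$ which, combined with the dissipation bound, yields the desired differential inequality.

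First, since $\bar v=0$ and $c:=\bar h_0$ is conserved in time by Theorem~\ref{thm:existence}(iv), a direct expansion gives $E[h]=E[v]-\pi c^2$, so $\tfrac{d}{dt}E[v]=\tfrac{d}{dt}E[h]$. The energy dissipation identity in Theorem~\ref{thm:existence}(v) then provides the integral form of
\begin{equation*}
\frac{d}{dt}E[v](t)= -\int_{S^1} h(t,\theta)^{\alpha+2}\bigl|\partial_\theta v + \partial_\theta^3 v\bigr|^{\alpha+1}\,d\theta.
\end{equation*}
Formally this can also be seen by testing the equation with $-(v+\partial_\theta^2 v)$ and integrating by parts, but using Theorem~\ref{thm:existence}(v) bypasses the low-regularity justification. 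The smallness hypothesis $\|v_0\|_{H^1(S^1)}<\delta$, together with the embedding $H^1(S^1)\hookrightarrow L_\infty(S^1)$ and the continuity of $t\mapsto h(t)$ in $H^1(S^1)$ (combined with local in time control to keep $\|v(t)\|_{H^1}$ small), guarantees $h\geq c/2>0$ pointwise, so $h^{\alpha+2}\geq C_0>0$; writing $w:=\partial_\theta v+\partial_\theta^3 v$, this gives $\tfrac{d}{dt}E[v]\leq -C_0\int_{S^1}|w|^{\alpha+1}\,d\theta$.

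The heart of the argument — and the main obstacle — is the Poincaré-type estimate. I would work via the Fourier series $v=\sum_{n\in\Z} v_n e^{in\theta}$: the identity $w_n=-in(n^2-1)v_n$ shows that $w_0=w_{\pm 1}=0$ automatically, and for $|n|\geq 2$ one has $v_n = iw_n/[n(n^2-1)]$. Since $E[v]=\pi\sum_{n\in\Z}(n^2-1)|v_n|^2$ also vanishes on the modes $n=0,\pm 1$, one computes
\begin{equation*}
E[v] = \pi\sum_{|n|\geq 2}\frac{|w_n|^2}{n^2(n^2-1)}\leq C\,\|w\|_{H^{-2}(S^1)}^2.
\end{equation*}
The inequality $\|w\|_{H^{-2}(S^1)}\leq C\,\|w\|_{L_{\alpha+1}(S^1)}$ then follows by duality: since $\|w\|_{H^{-2}}=\sup_{\|\phi\|_{H^2}\leq 1}\int_{S^1} w\phi\,d\theta$, Hölder's inequality with exponents $\alpha+1$ and $q=(\alpha+1)/\alpha$, together with the Sobolev embedding $H^2(S^1)\hookrightarrow L_\infty(S^1)\hookrightarrow L_q(S^1)$, provides the bound. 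Therefore $E[v]\leq C\,\|w\|_{L_{\alpha+1}(S^1)}^2$; raising both sides to the power $(\alpha+1)/2$ and combining with the earlier bound yields $\tfrac{d}{dt}E[v]\leq -\tilde C\,E[v]^{(\alpha+1)/2}$.

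Delicate points to keep track of: (i) the chain-rule computation of $\tfrac{d}{dt}E[v]$ for the weak solution of low regularity, circumvented by invoking the energy identity directly; (ii) maintaining the lower bound $h\geq c/2$ on the whole time interval where the inequality is used, which is why the smallness of $\|h_0-\bar h_0\|_{H^1}$ enters; and (iii) the duality argument, whose effectiveness relies crucially on the fact that $E[v]$ and $w$ both annihilate the two-dimensional kernel $\operatorname{span}\{\cos\theta,\sin\theta\}$ of the operator $\partial_\theta + \partial_\theta^3$, so that the map $v\mapsto w$ is effectively invertible on the relevant high-frequency subspace.
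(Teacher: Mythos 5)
Your proposal is correct and follows essentially the same route as the paper: identify $\tfrac{d}{dt}E[v]$ with the dissipation through the energy identity, use the positive lower bound on $h=c+v$ coming from the closeness-to-a-circle assumption, and establish $E[v]\leq C\,\|\partial_\theta v+\partial_\theta^3 v\|_{L_{\alpha+1}(S^1)}^2$ by inverting $\partial_\theta+\partial_\theta^3$ on the Fourier modes $|n|\geq 2$, on which $E[v]$ is supported. The only (cosmetic) difference is how that Poincar\'e-type inequality is quantified: the paper writes the inverse as convolution with a $C^1$ kernel $G$ and bounds $\|\partial_\theta(A\Phi)\|_{L_\infty(S^1)}\leq C\|\Phi\|_{L_1(S^1)}\leq C\|\Phi\|_{L_{\alpha+1}(S^1)}$, whereas you use an $H^{-2}$-duality estimate combined with H\"older and the Sobolev embedding — both arguments are equivalent in substance.
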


%--------------------------------------------------------
%--------------------------------------------------------
\medskip

\begin{proof}
Using the Fourier series representation of $v$ we have that
\begin{equation*}
    v(t,\theta) = \sum_{n\in\Z, n\neq 0} v_n(t) e^{in\theta}
    \quad \text{and} \quad
    E[v](t) = \pi \sum_{n\in\Z, n\neq 0, 1} (n^2-1) \left|v_n(t)\right|^2 \geq 0
\end{equation*}
for all $t\in [0,T]$ and $\theta \in S^1$. We define 
\begin{equation*}
    \Phi(t,\theta) = \de{}v(t,\theta)+\de{3}v(t,\theta)
    =
    \sum_{n\in\Z,n\neq 0,\pm 1} \Phi_n(t) e^{in\theta}
    \quad \text{with} \quad
    \Phi_n(t) = \bigl(in - in^3\bigr) v_n(t)
\end{equation*}  
and write
\begin{equation*}
    v(t,\theta) = (A\Phi)(t,\theta) +  w(t,\theta)
    \quad \text{with} \quad
    w(t,\theta) = a(t) \cos(\theta) + b(t) \sin(\theta)
\end{equation*}
and $a(t), b(t) \in \R$ for all $t \in [0,T]$.
Then the Fourier series representation of $A\Phi$ is
\begin{equation*}
\begin{split}
    (A\Phi)(t,\theta) 
    &= \sum_{n\in\Z, n\neq 0,\pm 1} v_n(t) e^{in\theta} 
    \\
    &= \sum_{n\in\Z, n\neq 0,\pm 1} \frac{\Phi_n(t)}{(in - in^3)} e^{in\theta}
    \\
    &= \frac{1}{2\pi} \sum_{n\in\Z, n\neq 0,\pm 1} \int_{S^1} \frac{\Phi_n(t)}{(in - in^3)} e^{in(\theta-\xi)}\, d\xi,
\end{split}
\end{equation*}
that is, $A\Phi$ is given by
\begin{equation*}
    (A\Phi)(t,\theta) = \int_{S^1} \Phi(t,\xi) G(\theta - \xi)\, d\xi = (\Phi(t)\ast G)(\theta),
    \quad \text{where} \quad
    G(\theta) = \frac{1}{2\pi} \sum_{n\in\Z, n\neq 0,\pm 1} \frac{e^{in\theta}}{(in - in^3)}
\end{equation*}
for $\theta \in S^1$. Moreover, since $G \in C^1(S^1)$, we observe that the derivative $\de{}(A\Phi)$ satisfies
\begin{equation*}
    \left|\de{}(A\Phi(t))\right| \leq 
    \int_{S^1} \left|\Phi(t,\xi)\right|\cdot \sup_{\theta\in S^1} \left|\de{}G(\theta)\right|\, d\xi 
    \leq
    C \left\|\Phi(t)\right\|_{L_1(S^1)} \leq C \left\|\Phi(t)\right\|_{L_{\alpha+1}(S^1)}, \quad t \in [0,T].
\end{equation*}
Consequently, using the identities 
\begin{equation*}
    E[v](t) =
    \frac{1}{2} \int_{S^1} (\de{}v(t))^2 - v(t)^2\, d\theta
    =
    \frac{1}{2} \int_{S^1} \bigl(\de{}(A\Phi(t))\bigr)^2 - (A\Phi(t))^2\, d\theta,
\end{equation*}
we discover that
\begin{equation*}
\begin{split}
    0 &\leq 
    E[v](t)
    \leq
    \frac{1}{2} \left\|\de{}(A\Phi(t))\right\|_{L_2(S^1)}^2
    \leq
    C \left(\int_{S^1} \left|\de{}v(t)+\de{3}v(t)\right|^{\alpha+1}\, d\theta\right)^\frac{2}{\alpha+1}
    \\
    &\leq
    C \left(\int_{S^1} \left|c+v(t)\right|^{\alpha+2} \left|\de{}v(t)+\de{3}v(t)\right|^{\alpha+1}\, d\theta\right)^\frac{2}{\alpha+1}
    =
    C \left(-\frac{d}{dt} E[v](t)\right)^\frac{2}{\alpha+1}.
\end{split}
\end{equation*}
This yields the desired estimate.
\end{proof}

%--------------------------------------------------------
%--------------------------------------------------------
\medskip

Using this differential inequality for the energy, we may prove that the energy converges to zero in finite time. We obtain an upper bound of the \emph{extinction time} $t^\ast$ in terms of the distance $\delta$ from the initial value $h_0$ to its average $\bar{h}_0$. As time tends to $t^\ast$, the first Fourier modes converge to a constant, while the remaining terms converge to zero in the $H^1$-norm. For times $t \geq t^\ast$ the solution can be extended globally in time as the  limit circle at time $t^\ast$, i.e. it does not change its shape at later times. In this way we obtain a solution that is defined for arbitrarily large times.
%--------------------------------------------------------
%--------------------------------------------------------
\medskip

\begin{proof}[\textbf{Proof of Theorem \ref{thm:global_ex}}]
We divide the proof into several steps.\\
(i) We first prove that there exists a positive time $t^\ast > 0$ such that $E[v](t)=0,\ t \geq t^\ast$. In the previous Lemma \ref{lem:diff_inequ} we have derived the differential inequality 
\begin{equation*}
    \frac{d}{dt} E[v](t) \leq -C \bigl(E[v](t)\bigr)^\frac{\alpha+1}{2},
    \quad
    t \in [0,T],
\end{equation*}
for all $v \in C\bigl([0,T];H^1(S^1)\bigr)$ with zero average and for all flow behaviour exponents $\alpha < 1$. This inequality implies that $E[v](\cdot)$ is decreasing and thus
\begin{equation*}
	\frac{d}{dt} \bigl(E[v](t)^\frac{1-\alpha}{2}\bigr) 
	\leq
	-C_\alpha,
	\quad t \in [0,T],
\end{equation*}
as long as $E[v](t) > 0$, where $C_\alpha= \frac{C(1-\alpha)}{2}$.
Therefore,
\begin{equation} \label{eq:diff_inequ_II}
    \bigl(E[v](t)\bigr)^\frac{1-\alpha}{2} \leq \bigl(E[v_0]\bigr)^\frac{1-\alpha}{2} - C_\alpha t,
    \quad
    t \in [0,T], 
    \quad \text{if} \quad E[v](t) > 0.
\end{equation}
This in turn implies that
\begin{equation*}
    E[v](t) \leq \left(\bigl(E[v_0]\bigr)^\frac{1-\alpha}{2} - C_\alpha t\right)^\frac{2}{1-\alpha},
    \quad
    t \in [0,T], \quad \text{if} \quad E[v](t) > 0.
\end{equation*}
Thus, we can conclude that there exists a time $t^\ast \geq 0$ such that
\begin{equation*}
    E[v](t) = 0, \quad t \geq t^\ast,
    \quad \text{where} \quad
    t^\ast \leq \frac{(E[v_0])^\frac{1-\alpha}{2}}{C_\alpha}.
\end{equation*}
Note that $t^\ast > 0$ is strictly positive, if $E[v_0]>0$, as a consequence of the continuity of the energy. Moreover, recall that we assume $h$ to be initially close to a circle in the sense that $\left\|h_0 - c\right\|_{H^1(S^1)} = \left\|v_0\right\|_{H^1(S^1)} < \delta$ for some small but positive $\delta > 0$. If we choose $\delta < (C_\alpha T)^\frac{1}{1-\alpha}$, then we find that
\begin{equation*}
	E[v](t) = 0, \quad t \geq t^\ast,
	\quad \text{where} \quad
	0 < t^\ast
	<
	\frac{\left\|v_0\right\|_{H^1(S^1)}^{1-\alpha}}{C_\alpha}
	< \frac{\delta^{1-\alpha}}{C_\alpha}
	< T.
\end{equation*}
It is worthwhile to mention that the extinction time $t^\ast$ is smaller that the maximal time $T$ of existence if $\delta$ is small enough.

%for some small $\eps > 0$. Then we can derive a \textcolor{magenta}{lower} bound for the extinction time $t^\ast$. Indeed, recalling that 
%$E[v_0] \leq \left\|v_0\right\|_{H^1(S^1)}^2 \leq \delta^2$, 
%the differential inequality \eqref{eq:diff_inequ_II} immediately leads us to the estimate
%\begin{equation*}
%    \bigl(E[v](t)\bigr)^\frac{1-\alpha}{2} \leq \left\|v_0\right\|_{H^1(S^1)}^{1-\alpha} - C_\alpha t \leq \eps^{1-\alpha} - C_\alpha t,
%    \quad
%    \textcolor{magenta}{t \in [0,T].}
%\end{equation*}
%Hence, 
%we find that

(iii) We can write $h$ in terms of its Fourier series as
\begin{equation*}
    h(t,\theta) = c + v_{-1}(t) e^{-i\theta} + v_1(t) e^{i\theta} + \Phi(t,\theta) \quad \text{with} \quad \Phi(t,\theta) = \sum_{n\in\Z, n \neq 0,\pm 1} v_n(t) e^{in\theta}
\end{equation*}
for $t \in [0,T]$ and $\theta \in S^1$. We show that $\Phi$ converges to zero as time $t$ tends to the extinction time $t^\ast$. It holds that
\begin{align*}
    \left\|\Phi(t)\right\|_{H^1(S^1)}^2
    &=
    \left\|\Phi(t)\right\|_{L_2(S^1)}^2 + \left\|\de{}\Phi(t)\right\|_{L_2(S^1)}^2\\
    &=
    2\pi \sum_{n=\in\Z,n \neq 0,\pm 1} (n^2+1) \left|v_n(t)\right|^2
    \\
    &\leq C \pi \sum_{n\in\Z, n\neq 0,\pm 1} (n^2-1) \left|v_n(t)\right|^2.
    \\
    &= C E[v](t). 
    % \longrightarrow 0 \quad \text{as } t \to t^\ast .
\end{align*}
Consequently, we discover that
\begin{equation*}
    \left\|\Phi(t)\right\|_{H^1(S^1)}^2 \leq C \delta^2, \quad t \in [0,T], \quad \text{and} \quad \left\|\Phi(t)\right\|_{H^1(S^1)}^2 \longrightarrow 0 \quad \text{as } t \to t^\ast
\end{equation*}

(iv) Next, we prove that $\left|v_{\pm 1}(t)\right|$ is bounded by $\delta$ for all $t \in [0,t^\ast)$. To this end, observe first that
\begin{equation*}
    \left|v_{\pm 1}(0)\right| \leq \left\|v_0\right\|_{H^1(S^1)} \leq \delta.
\end{equation*}
Moreover, as in the proof of Lemma \ref{lem:H^1} we can derive the estimate
\begin{equation*}
    \begin{split}
    \left|\frac{d}{dt} v_{\pm 1}(t)\right|
    &=
    \left|\frac{d}{dt} \left(\frac{1}{2\pi} \int_{S^1} h(t,\theta) e^{\mp i\theta}\, d\theta\right)\right| \\
    &=
    \left|\frac{i}{2\pi}\int_{S^1} h^{\alpha+2}  \psi(\de{}h+\de{3}h) e^{\mp i\theta}\, d\theta\right|
    \\
    &\leq
    C \int_{S^1} \left|h\right|^{\alpha+2} \left|\de{}h+\de{3}h\right|^\alpha\, d\theta.
\end{split}
\end{equation*}
Integration with respect to time and applying H\"older's inequality lead to the estimate
\begin{equation*}
\begin{split}
    \left|v_{\pm 1}(t) - v_{\pm 1}(0)\right|
    &\leq
    C \left(\int_0^t \int_{S^1} \left|h\right|^{\alpha+2} \left|\de{}h+\de{3}h\right|^{\alpha+1}\, d\theta\, ds\right)^\frac{\alpha}{\alpha+1} \left(\int_0^t\int_{S^1}\left|h\right|^{\alpha+2}\, d\theta\, ds\right)^\frac{1}{\alpha+1}
    \\
    &\leq C t^\frac{1}{\alpha+1} \sup_{t \in (0,T)} \left\|h(t)\right\|_{L_\infty(S^1)}^\frac{\alpha+2}{\alpha+1} 
    \left(\int_0^t\int_{S^1} \left|h\right|^{\alpha+2} \left|\de{}h+\de{3}h\right|^{\alpha+1}\, d\theta\, ds\right)^\frac{\alpha}{\alpha+1}, \quad t \leq t^\ast,
    \end{split}
\end{equation*}
whence we find that
\begin{equation*}
    \left|v_{\pm 1}(t) - v_{\pm 1}(0)\right| 
    \leq C^\frac{\alpha+2}{\alpha+1} (D_t[h])^\frac{\alpha}{\alpha+1} 
    \leq C \left\|v_0\right\|_{H^1(S^1)}^\frac{2\alpha}{\alpha+1} 
    \leq
    C \delta^\frac{2\alpha}{\alpha+1}.
\end{equation*}
This implies
\begin{equation*}
    \left|v_{\pm 1}(t)\right| \leq \delta + C \delta^\frac{2\alpha}{\alpha+1}, \quad t \leq t^\ast.
\end{equation*}

(v) Next, we prove that $v_{\pm 1}(t) \to v_{\pm 1}(t^\ast)$ as $t \to t^\ast$. To this end, we first observe that the bound
\begin{equation*}
    \left|v_{\pm 1}(t)\right| \leq C \sup_{t \in (0,T)} \left\|h(t,\theta)\right\|_{L_\infty(S^1)} \leq C, \quad t \in [0,T),
\end{equation*}
implies that there exists a sequence $t_n \to t^\ast$ and an element $\chi \in \R$ such that
\begin{equation} \label{eq:conv_t_n}
    \lim_{n\to \infty} v_{\pm 1}(t_n) = \chi.
\end{equation}
Moreover, as in the previous step (iv), we obtain 
\begin{equation*}
\begin{split}
    \lim_{n \to \infty}
    \left|v_{\pm 1}(t_n) - v_{\pm 1}(t)\right| 
    &\leq  \lim_{n \to \infty }C \sup_{s \in (t,t^\ast)} \left\|h(s)\right\|_{L_\infty(S^1)}^\frac{\alpha+2}{\alpha+1} \left(\int_t^{t_n}\int_{S^1} \left|h\right|^{\alpha+2} \left|\de{}h+\de{3}h\right|^{\alpha+1}\, d\theta\, ds\right)^\frac{\alpha}{\alpha+1}
    \\
    & = C \sup_{s \in (t,t^\ast)} \left\|h(s)\right\|_{L_\infty(S^1)}^\frac{\alpha+2}{\alpha+1} \left(\int_t^{t^\ast}\int_{S^1} \left|h\right|^{\alpha+2} \left|\de{}h+\de{3}h\right|^{\alpha+1}\, d\theta\, ds\right)^\frac{\alpha}{\alpha+1}.
\end{split}
\end{equation*}
Thanks to the convergence in \eqref{eq:conv_t_n} we also have
\begin{equation*}
    \left|\chi - v_{\pm 1}(t)\right|
    =
    \lim_{n \to \infty} \left|v_{\pm 1}(t_n) - v_{\pm 1}(t)\right| 
    \leq
    C \sup_{s \in (t,t^\ast)} \left\|h(s)\right\|_{L_\infty(S^1)}^\frac{\alpha+2}{\alpha+1} \left(\int_t^{t^\ast}\int_{S^1} \left|h\right|^{\alpha+2} \left|\de{}h+\de{3}h\right|^{\alpha+1}\, d\theta\, ds\right)^\frac{\alpha}{\alpha+1}
\end{equation*}
and the right-hand side converges to zero, as $t \to t^\ast$.

(vi) Now we construct a solution with infinite lifetime by defining
\begin{equation*}
	h(t,\theta) = c + v_{-1}(t) e^{-i\theta} + v_1(t) e^{i\theta} + \Phi(t,\theta),
	\quad
	t \geq 0,
\end{equation*}
where
\begin{align*}
	v_{\pm 1}(t) &=
	\begin{cases}
		v_{\pm 1}(t), & t \leq t^\ast \\
		v_{\pm 1}(t^\ast), & t > t^\ast,
	\end{cases}
	\\
	\Phi(t,\theta) &=
	\begin{cases}
		\Phi(t,\theta), & t \leq t^\ast \\
		0, & t > t^\ast.
	\end{cases}
\end{align*}
Then $h$ defines for all times $t > 0$ a weak solution of \eqref{eq:pde}.   

%By the typical continuation argument we are now able to construct a solution with infinite lifetime. Indeed, from the previous steps we know that
%$v(t^\ast)$ is well-defined in the sense that
%\begin{equation*}
%    \left\|h(t^\ast,\theta) - c\right\|_{L_\infty(S^1)} \leq \left\|\Phi(t^\ast,\theta)\right\|_{H^1(S^1)} + \left|v_{\pm 1}(t^\ast)\right| \leq C \left(\eps + \eps^\frac{2\alpha}{\alpha+1}\right) 
%    \textcolor{magenta}{\leq \eps}.
%\end{equation*}
%\textcolor{magenta}{(Do we want to include this bound into the statement?)}
%This allows us to \textcolor{magenta}{extend} the \textcolor{magenta}{solution} at time $t=t^\ast$ as follows. We set

(vii) Finally, we prove that the extension of the solution to the time interval $[t^\ast,\infty)$ by 
\begin{equation*}
    h(t,\theta) = c + v_{-1}(t^\ast) e^{-i\theta} + v_1(t^\ast) e^{i\theta}, \quad t \geq t^\ast,\ \theta \in S^1,
\end{equation*}
where $v_{\pm 1} = v_{\pm 1}(t^\ast)$, is unique. For this purpose, assume that 
$f$ is another solution of \eqref{eq:pde} on $[0,\infty)$ such that 
\begin{equation*}
    f(t^\ast,\theta) = h(t^\ast,\theta), \quad \theta \in S^1. 
\end{equation*}
Then also $E[f](t) = 0$ for all $t \geq t^\ast$ and $f$ is given by
\begin{equation}\label{eq:f_circle}
    f(t,\theta) = c + w_{-1}(t) e^{-i\theta} + w_1(t) e^{i\theta}, \quad t \geq t^\ast,\ \theta \in S^1.
\end{equation}
In addition, $f$ satisfies the integral equation
\begin{equation}\label{eq:f_int_eq}
    \int_{t^\ast}^\infty \langle \partial_t f,\varphi\rangle\, dt = -\int_{t^\ast}^\infty \int_{S^1} |f|^{\alpha+2} \psi\bigl(\partial_\theta f + \partial_\theta^3 f\bigr)\, d\theta\ dt,
    \quad \varphi \in L_{\alpha+1}\bigl((0,T);W^1_{\alpha+1}(S^1)\bigr).
\end{equation}
Since $f$ is, for times $t \geq t^\ast$, given by a circle, cf. \eqref{eq:f_circle}, it holds that $\bigl(\partial_\theta f(t,\theta) + \partial_\theta^3 f(t,\theta)\bigr)=0$ pointwise for all $t \geq t^\ast$ and $\theta \in S^1$. Consequently, \eqref{eq:f_int_eq} implies that $f$ is constant in time. This in turn implies that the $w_{\pm 1}$ are constant in time. Finally, the continuity property of the solution implies that $w_{\pm 1} = v_{\pm 1}(t^\ast)$ and the proof is complete.
\end{proof}

%=============================================================================
%=============================================================================
%=============================================================================

%=============================================================================
%=============================================================================
%=============================================================================
%=============================================================================
%=============================================================================
\bigskip

\section{The case $\beta$ of order one -- Stability and long time behaviour} \label{sec:comparable_timescales}
In this section we study the stability of constant solutions of \eqref{eq:CaseI_1} and \eqref{eq:CaseI_2} when $\beta$ is of order one, i.e. when the surface tension forces are comparable with the shear forces induced by the rotation of the cylinder. These constant solutions describe circular interfaces which are concentric with the confining cylinders. We prove that, in the scaling limit $\beta$ of order one, solutions of (\ref{eq:CaseI_1}) and \eqref{eq:CaseI_2} with initial data close to a constant, converge to the constant with an error of order 1 as $t\rightarrow\infty.$ A more detailed analysis of the solution shows that the interface behaves, to the leading order, as a circle the center of which moves along a spiral towards the origin $O$, as $t\rightarrow\infty$ (cf. Theorem \ref{thestab}).

In the following, we denote by  $\dot{H}\e{k}(S\e{1})$ the homogeneous Sobolev spaces, consisting of functions $f\in H\e{k}(S\e{1})$ with zero average $\frac{1}{2\pi}\int_{S\e{1}}f\, d\theta=0$.
\bigskip

\subsection{The case $B>0$ of order one}\label{subsect3_1}
In the case $B > 0$ of order one the effects of the surface tension are comparable with those of the shear forces induced by the rotation of the cylinder and we have the evolution equation \eqref{eq:CaseI_1}, i.e.
\begin{equation*}
\partial_t h + \partial_\theta 
\left(h(\theta)^2 \int_0^1 z \psi\left(\tilde{\beta} + 
z\, h(\theta) \bigl(\partial_\theta h(\theta) + \partial^3_\theta h(\theta)\bigr)\right)\, dz\right) = 0,
\quad t > 0,\, \theta \in S^1. 
\end{equation*}
It turns out that, in this physical limit, the asymptotic behaviour of solutions, as $t\to\infty$, is as in the Newtonian case studied in \cite{PV1}. However, since the involved differential operators are different, we briefly sketch the arguments yielding the long-time asymptotics in order to give a complete picture of all the possible scaling limits.
\begin{theorem}\label{thglobal}
Let $c>0$. There exists $\delta>0$ (depending on $c$) such that, for any $h_0\in H\e{4}(S\e{1})$ satisfying $\norm{h_0-c}_{H\e{4}(S\e{1})}<\delta$ with $\frac{1}{2\pi}\int_{S\e{1}}h_0=c$, there exists a unique solution $h\in C([0,\infty); H\e{4}(S\e{1}))\cap C\e{1}((0,\infty);H\e{4}(S\e{1}))$ of \eqref{eq:CaseI_1}, where $h(0,\cdot)=h_0(\cdot)$.
Moreover, we have
\begin{equation}\label{cotathglobal}
    \norm{h(t,\cdot)-c}_{H\e{4}(S\e{1})}\leq C\delta,
    \quad t\geq 0,
\end{equation}
where $C$ depends only on $c$. 
\end{theorem}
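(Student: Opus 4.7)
The plan is to adapt the centre-manifold strategy sketched in \cite{PV1} for the Newtonian case to the present non-local, quasilinear setting, following the abstract framework of \cite{Mielke,centermanif}. Since the flux in \eqref{eq:CaseI_1} is in divergence form, the mean is conserved by the flow, so I would shift to the zero-mean perturbation $v := h - c \in \dot{H}^4(S^1)$ satisfying $\partial_t v + \partial_\theta F[c+v] = 0$, where $F[h]$ denotes the nonlinear flux in \eqref{eq:CaseI_1}. The theorem then reduces to proving a uniform-in-time $\dot H^4$-bound on $v$.

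A direct Taylor expansion of $F$ at $v=0$ yields the linearisation
\begin{equation*}
Lv = -c\,\psi(\tilde\beta)\,\partial_\theta v - \tfrac{c^3\psi'(\tilde\beta)}{3}\bigl(\partial_\theta^2 v + \partial_\theta^4 v\bigr).
\end{equation*}
Since $\psi$ is the inverse of the strictly increasing map $s \mapsto \mu(|s|)s$, we have $\psi'(\tilde\beta) > 0$, and $L$ is diagonal in the Fourier basis with eigenvalues $\lambda_n = -inc\psi(\tilde\beta) - \tfrac{c^3\psi'(\tilde\beta)}{3}n^2(n^2-1)$ for $n \in \Z\setminus\{0\}$. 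The modes $n = \pm 1$ give a two-dimensional, purely imaginary centre subspace $X_c = \mathrm{span}\{e^{i\theta},e^{-i\theta}\}$ (corresponding geometrically to infinitesimal off-centre translations of the circle $r=c$), while $\mathrm{Re}\,\lambda_n \le -c_0<0$ for $|n|\ge 2$ yields an exponentially stable complement $X_s$ with a uniform spectral gap.

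With this splitting in hand, I would invoke the centre-manifold theorem for quasilinear parabolic equations. The required ingredients are: (i) $L$, with domain $\dot{H}^8(S^1)$, generates an analytic semigroup on $\dot{H}^4(S^1)$ (sectoriality of the fourth-order part, whose sign is correct because $\psi'(\tilde\beta)>0$); (ii) the nonlinear remainder $N[v] := -\partial_\theta\bigl(F[c+v] - F[c] - DF[c]v\bigr)$ is smooth from a neighbourhood of $0$ in $\dot{H}^4(S^1)$ into $(\dot{H}^1(S^1))'$ with $N[0]=0$ and $DN[0]=0$, which follows from the smoothness of $\psi$ near $\tilde\beta$ together with Moser-type estimates in $H^4(S^1)$; (iii) the spectral-gap condition just verified. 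The theorem then produces a locally invariant $C^k$ centre manifold $W^c_{\mathrm{loc}} \subset \dot{H}^4(S^1)$ tangent to $X_c$, together with an exponentially attracting stable foliation over it.

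The main obstacle is that the equation is genuinely quasilinear and non-local (the inner $z$-integral contains $\psi$ evaluated at an argument depending on $\partial_\theta^3 v$), so the highest-order coefficient $h^3 z^2 \psi'(\,\cdot\,)$ is itself a nonlinear function of the solution. I would handle this by inserting a smooth cut-off that localises $\|v\|_{H^4}$ to be small, so that this coefficient remains a small, smooth perturbation of its value at $v=0$ and the sectorial framework of \cite{Mielke,centermanif} applies. The centre-manifold conclusion combined with the stable-manifold decomposition then gives: for $\|v_0\|_{H^4}<\delta$ small, the solution splits into a bounded trajectory on $W^c_{\mathrm{loc}}$, which is a small perturbation of the linear rotation $\dot a_{\pm 1} = \mp i c\psi(\tilde\beta) a_{\pm 1}$ and therefore remains of size $\mathcal{O}(\delta)$ for all time, plus an exponentially contracting transverse part. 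This yields global existence, the estimate $\|h(t)-c\|_{H^4(S^1)} \le C\delta$ of \eqref{cotathglobal}, and uniqueness (by a routine Gronwall argument on the difference of two $H^4$-solutions, using the parabolic smoothing of $L$).
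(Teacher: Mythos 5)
Your linearisation, Fourier diagonalisation and identification of the two-dimensional centre subspace spanned by $e^{\pm i\theta}$ are correct (and your coefficient $\psi'(\tilde\beta)$ in front of $\partial_\theta^2+\partial_\theta^4$ is the right one, consistent with \eqref{defq} and \eqref{defk}). However, your overall strategy is not the one the paper uses, and it has a gap at the decisive step. The paper proves Theorem \ref{thglobal} by the classical route: local existence of $v=h-c$ in $\dot{H}^4(S^1)$ via a fixed-point argument as in \cite{PV1}, parabolic smoothing, and then a-priori estimates combined with a continuation argument; the centre-manifold machinery is brought in only afterwards (Theorems \ref{thmmani} and \ref{thestab}) to extract the refined $1/\sqrt{t}$ asymptotics. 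This ordering is not accidental: the centre-manifold statements, both in \cite{centermanif} and in Theorem \ref{thmmani}(i),(iii), take as a \emph{hypothesis} that the solution remains in the small neighbourhood $\mathcal{O}$ for all times, so they cannot be invoked as such to produce the uniform bound \eqref{cotathglobal}.

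Working with the cut-off system, as you propose, avoids outright circularity, but then the entire burden falls on showing that the reduced two-dimensional flow \eqref{v0} on the centre manifold leaves a small ball invariant, and this is exactly the point you pass over. The claim that the reduced dynamics is ``a small perturbation of the linear rotation and therefore remains of size $\mathcal{O}(\delta)$ for all time'' is not a proof: a planar system $\dot a = i\omega a + \mathcal{O}(|a|^2)$ can escape every ball in infinite time (consider $\dot a = i\omega a + a|a|^2$). What makes the statement true here is the specific normal form: rotation equivariance forces $\dot a_1 = a_1\, g(|a_1|^2)$ with $g(0)=i\omega$, so the quadratic terms are slaved into $\Phi$ (cf. \eqref{defq}, \eqref{defDpsi}) and feed back as a cubic term $\kappa\, a_1|a_1|^2$ whose real part must be shown to be negative --- this is precisely what produces the $K/\sqrt{t}$ decay in \eqref{cotasolucmanifold}. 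Without computing $D^2\Phi(0)$ and the projection $P_0R$, and verifying $\mathrm{Re}\,\kappa<0$, your argument does not close. Two smaller points: the nonlinearity should be taken with values in $L_2(S^1)$ rather than $(\dot{H}^1(S^1))'$ to match the resolvent hypotheses of \cite{centermanif}; and uniqueness for a quasilinear fourth-order problem is most naturally obtained from the local fixed-point construction rather than from an a posteriori Gronwall argument on two $H^4$-solutions.
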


%%% Beginning of new version of this part
In order to prove Theorem \ref{thglobal}, it is convenient to reformulate \eqref{eq:CaseI_1} in a coordinate system rotating at velocity $c$. Moreover, we linearise around the constant solution $h=c$.
More precisely, we define
\begin{equation}\label{defvfuncionh}
    v(t,\varphi)=h(t,\theta)-c\quad\textit{with}\quad\varphi=\theta-c\psi(\tilde{\beta})t.
\end{equation} 
Using the evolution equation \eqref{eq:CaseI_1}, we obtain that $v$ solves the equation
\begin{equation}\label{evoleq}
    \frac{dv}{dt}=L(v)+R(v),
\end{equation}
where $L\colon \dot{H}\e{k}(S\e{1})\to L_{2}(S\e{1})$ is a linear operator, given by
\begin{equation}\label{defl}
    \textcolor{magenta}{
    L(v)
    =
    -\frac{c^3}{3}\psi(\tilde{\beta})\bigl(\partial_\varphi^2 v+\partial_\varphi^4 v\bigr),\quad v\in\dot{H}^4(S^1),}
\end{equation}
and $R\colon\dot{H}\e{k}(S\e{1})\to L_{2}(S\e{1})$ is the non-linear operator defined by
\begin{equation}\label{defr}
    \textcolor{magenta}{
    R(v)
    =
    -\psi(\tilde{\beta})\partial_\phi(v^2) -\frac{5}{3}c\e{2}\psi^\prime(\tilde{\beta})\partial_\phi\bigl(v\,\bigl(\partial_\phi v+\partial_\phi^3 v\bigr)\bigr)+\frac{c\e{4}}{2}\psi^{\prime\prime}(\tilde{\beta})\partial_\phi\bigl(\bigl(\partial_\theta v+\partial_\phi\e{3}v\bigr)\e{2}\bigr)+h.o.t}
\end{equation}
for $v\in\dot{H}^4(S^1)$.
Note that $L$ and $R$ are well-defined bounded operators from $\dot{H}\e{k}(S\e{1})$ to $L_{2}(S\e{1})$.
Now we define 
\begin{equation*}
    V_0= \text{span}\{\cos(\phi),\sin(\phi)\}\subset L_{2}(S\e{1}) \quad \text{and}\quad V_1 = \bigl(V_0\oplus \text{span}\{1\}\bigr)^\bot \subset L_2(S^1),
\end{equation*}
where $\bot$ denotes the orthogonal complement in $L_2(S\e{1})$. 
Moreover, we introduce the following subspaces of $\dot{H}^4(S^1)$,
\begin{equation}\label{e0e1}
    \mathcal{E}_0=V_{0}\cap\dot{H}^4(S^1)\quad\text{and}\quad\mathcal{E}_1=V_{1}\cap\dot{H}^4(S^1),
\end{equation}
and we denote by $P_0$ and $P_1$ the orthogonal projections of $L_{2}(S\e{1})$ onto $V_0$, respectively $V_1$. Therefore, we have that $\dot{H}^4(S^1)=\mathcal{E}_0\oplus\mathcal{E}_1$.
Indeed, using Fourier expansion, it readily follows that $P_0(\dot{H}^4(S^1))\subset \mathcal{E}_0$, and $P_1(\dot{H}^4(S^1))\subset\mathcal{E}_1$. Consequently, we can
write each $v\in\dot{H}^4(S^1)$ as a sum
$v=v_0+v_1$ with $v_0=P_0(v)\in\mathcal{E}_0$ and $v_1=P_1(v)\in \mathcal{E}_{1}$. 
We finally introduce the quadratic operator, $Q\colon\mathcal{E}_0\times\mathcal{E}_0\to\mathcal{E}_1$ by
\begin{equation}\label{defq}
    Q(v_0)
    =
    \frac{i\psi(\tilde{\beta})}{8c^3\psi^\prime(\tilde{\beta})}a_{-1}^2 e^{-2i\phi}-\frac{i\psi(\tilde{\beta})}{8c^3\psi^\prime(\tilde{\beta})}a_{1}^2 e^{2i\phi}
\end{equation}
for each $v_0=a_{-1}e\e{-i\phi}+a_1e\e{i\phi}\in\mathcal{E}_0$, where $a_1=\overline{a_{-1}}$. 
Since the proof Theorem \ref{thglobal} follows the lines of the proof of \cite[Thm 5.1]{PV1}, we only sketch it here.
%-----
\medskip

\begin{proof}
First, one can prove existence of solutions
\begin{equation*}
    v \in C\bigl([0,T];\dot{H}^4(S^1)\bigr) \cap C^1\bigl((0,T];\dot{H}^4(S^1)\bigr)
\end{equation*}
on some small time interval $[0,T]$, cf. \cite[Prop. A.1]{PV1}. By standard parabolic theory, one can then prove that the solution even has the better regularity
\begin{equation*}
    v \in C^\infty\bigl((0,T];\dot{H}^k(S^1)\bigr), \quad k \geq 1,
\end{equation*}
cf. \cite[Lemma 5.2]{PV1}. With the improved regularity, one may then derive appropriate a-priori estimates for the operators introduced above in order to obtain global existence by a continuation argument.
\end{proof}

%---------
\medskip

A center manifold theory for quasilinear problems has been developed in \cite{Mielke}. We follow here the version of this type of theory as developed in \cite{centermanif}. Similar techniques can be also found in \cite[Chapter 9]{lunardy}.
%---------
\medskip

\begin{theorem}\label{thmmani}
Let $L$ and $R$ be defined as in \eqref{defl} and \eqref{defr}. There exists a map $\Phi\in\mathcal{C}\e{k}(\mathcal{E}_0,\mathcal{E}_1)$, where $\mathcal{E}_0=\text{Im} P_0=\text{Re} P_1\subset\dot{H}^4(S\e{1})$ and $\mathcal{E}_1=P_1\dot{H}^4\subset\dot{H}^4(S\e{1})$, with $\Phi(0)=0$ and $D\Phi(0)=0$.
Moreover, there exists a neighbourhood $\mathcal{O}$ of $0$ in $\dot{H}^4(S\e{1})$ such that the manifold 
\begin{equation}\label{manifold}
    \mathcal{M}_0=\{v_0+\Phi(v_0);v_0\in\mathcal{E}_0\}\subset\dot{H}^4(S\e{1})
\end{equation} has the following properties:
\begin{itemize}
    \item[(i)]$\mathcal{M}_0$ is locally invariant, i.e. if $v$ is a solution to \eqref{evoleq}, satisfying $v(0)\in\mathcal{M}_0\cap\mathcal{O}$
        and $v(t)\in\mathcal{O}$ for all $t\in[0,T]$, then $v(t)\in\mathcal{M}_0$ for all $t\in[0,T]$.
    \item[(ii)] $\mathcal{M}_0$ contains the set of     bounded solutions of \eqref{evoleq} that stay    in $\mathcal{O}$ for all $t\in\R$. If $v$ is a solution to $\frac{dv}{dt}=L(v)+R(v)$      that belongs to $\mathcal{M}_0$ for $t\in I$,    where $I\subset\R$ is an open interval, then $v=v_0+\Phi(v_0)$ and $v_0$ satisfies 
        \begin{equation}\label{v0}
            \frac{dv_0}{dt}=L_0(v_0)+P_0R\bigl(v_0+\Phi(v_0)\bigr),
        \end{equation}
        where $L_0$ is the restriction of $L$ to $\mathcal{E}_0$.
        Moreover, $\Phi$ satisfies 
    \begin{equation}\label{psi}
        D\Phi(v_0)\bigl(L_0(v_0)+P_0R(v_0+\Phi(v_0))\bigr)=L_1\Phi(v_0)+P_1R\bigl(v_0+\Phi(v_0)\bigr)\quad\forall v_0\in\mathcal{E}_0.
    \end{equation}
    \item[(iii)] $\mathcal{M}_0$ is locally     attracting in the following sense. If $v(0)\in\mathcal{O}$ and if the solution to \eqref{evoleq}, corresponding to this initial value, satisfies $v(t)\in\mathcal{O}$ for all $t>0$, then there exist an initial value $\tilde{v}(0)\in\mathcal{M}_0\cap\mathcal{O}$ and a constant $a > 0$ such that $\norm{v-\tilde{v}}_{\dot{H}\e{4}(S\e{1})}\leq Ce\e{-at}$, as $t\to\infty$.
\end{itemize} 
\end{theorem}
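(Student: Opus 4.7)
The strategy is to cast \eqref{evoleq} in the framework of the abstract quasilinear center-manifold theorem of \cite{centermanif} (a variant of \cite{Mielke}), which requires: a splitting of the phase space into a finite-dimensional center subspace and a stable subspace separated by a spectral gap; appropriate smoothness of the nonlinear remainder between the two scales of spaces on which the linearisation acts; and vanishing of the remainder and its derivative at the origin. The splitting is $\dot{H}^4(S^1) = \mathcal{E}_0 \oplus \mathcal{E}_1$ from \eqref{e0e1}; constructing $\mathcal{M}_0$ then amounts to producing the graph map $\Phi$ that makes \eqref{manifold} locally invariant under \eqref{evoleq}.

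First I would carry out the spectral analysis of $L$. Expanding $v\in\dot{H}^4(S^1)$ in Fourier modes shows that $L$ acts diagonally with eigenvalues $\lambda_n = -\tfrac{c^3}{3}\psi(\tilde\beta)\, n^2(n^2-1)$ for $n\in\Z\setminus\{0\}$. Thus $\lambda_{\pm 1}=0$ while $\lambda_n \le -4c^3\psi(\tilde\beta)<0$ for $|n|\ge 2$, so $L_0 := L\vert_{\mathcal{E}_0}$ vanishes identically, the projections $P_0,P_1$ commute with $L$, and $L_1 := L\vert_{\mathcal{E}_1}$ is sectorial and generates an analytic semigroup on $\mathcal{E}_1$ with exponential decay rate at least $\gamma := 4c^3\psi(\tilde\beta)$. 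This is the required spectral gap.

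Next I would verify that $R$ meets the smoothness hypotheses of the abstract theorem as a map $\dot{H}^4(S^1)\to L_2(S^1)$. Each summand in \eqref{defr} is a polynomial in $v$ and its derivatives up to fourth order. The worst terms, of the form $v\,\partial_\varphi^4 v$ or $\partial_\varphi v\,\partial_\varphi^4 v$ hidden inside $\partial_\varphi\bigl(v(\partial_\varphi v+\partial_\varphi^3 v)\bigr)$ and $\partial_\varphi\bigl((\partial_\varphi v+\partial_\varphi^3 v)^2\bigr)$, are controlled via the one-dimensional embedding $H^1(S^1)\hookrightarrow L_\infty(S^1)$ and continuity of pointwise multiplication $H^k\times H^{4-k}\to L_2$. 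This gives $R\in C^k\bigl(\dot{H}^4(S^1),L_2(S^1)\bigr)$ for the required $k$, with $R(0)=0$ and $DR(0)=0$ since every term is at least quadratic in $v$. The hard part here is the quasilinear character of $R$: its top-order derivative matches that of $L$, so no contraction can be closed inside $\dot{H}^4(S^1)$ alone. The theorem of \cite{centermanif} circumvents this by running a Lyapunov--Perron fixed-point argument in the variation-of-constants formula, exploiting the smoothing of $e^{tL_1}\colon L_2\to\dot{H}^4$ to regain the two missing derivatives; once the spectral gap and the regularity of $R$ are in place, its hypotheses are met in our concrete setting.

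With these ingredients the abstract theorem produces $\Phi\in C^k(\mathcal{E}_0;\mathcal{E}_1)$ with $\Phi(0)=0$ and $D\Phi(0)=0$, a neighbourhood $\mathcal{O}$ of the origin in $\dot{H}^4(S^1)$, and the locally invariant manifold $\mathcal{M}_0$ of \eqref{manifold}, which is (i). For (ii), substituting $v=v_0+\Phi(v_0)$ into \eqref{evoleq} and applying the projections $P_0, P_1$ together with the chain rule $\dot v = (I+D\Phi(v_0))\dot v_0$ yield both the reduced equation \eqref{v0} on $\mathcal{E}_0$ and the invariance identity \eqref{psi} on $\mathcal{E}_1$; the characterisation of $\mathcal{M}_0$ as containing every small globally bounded orbit is also output by the abstract result. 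Finally (iii), the exponential attraction of $\mathcal{M}_0$ is a standard consequence of the same Lyapunov--Perron scheme: for $\mathcal{O}$ sufficiently small, the quadratic estimate on $R$ can be absorbed into the spectral gap of $L_1$, giving convergence of nearby orbits to $\mathcal{M}_0$ at rate $e^{-at}$ for any $a\in(0,\gamma)$.
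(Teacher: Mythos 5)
Your proposal is correct and follows essentially the same route as the paper: verify the spectral splitting of $L$ (zero eigenvalues at $n=\pm 1$ and a gap of size $4c^3\psi(\tilde{\beta})$ for $|n|\ge 2$), the $C^k$ regularity of $R\colon\dot{H}^4(S^1)\to L_2(S^1)$ with $R(0)=0$ and $DR(0)=0$, and then invoke the abstract quasilinear center-manifold theorem of \cite{centermanif}. The one hypothesis the paper makes explicit that you only gesture at is that the quasilinear loss of derivatives is handled by checking the resolvent estimate $\|(isI-L_1)^{-1}\|_{\mathcal{L}(L_2(S^1))}\le C/|s|$, which by \cite[Rem.~2.16]{centermanif} yields the required optimal-regularity property in the Hilbert-space setting --- this is equivalent to the semigroup smoothing you describe and is immediate here since $L_1$ is diagonal in Fourier with spectrum bounded away from the imaginary axis.
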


%-----------
\medskip

\begin{proof}
In order to prove this theorem, we have to verify that the operators $L$ and $R$ introduced above satisfy the Hypotheses $5.1$--$5.3$ of \cite{PV1}. To this end, observe first that
\begin{itemize}
    \item[(i)] $L\in\mathcal{L}\bigl(\dot{H}^4(S\e{1}),L_{2}(S\e{1})\bigr)$.
    \item[(ii)]For some $k\geq 2$, there exists a neighbourhood $\mathcal{V}\subset\dot{H}^4(S\e{1})$ of $0$ such that $R\in\mathcal{C}^k\bigl(\mathcal{V};L_{2}(S\e{1})\bigr)$, $R(0)=0$ and $DR(0)=0$.
\end{itemize}
Moreover, the linear operator $L$ has the following spectral properties. First, its spectrum can be written as $\sigma=\sigma_0\cup\sigma_-$ where $\sigma_0=\{\lambda\in\sigma;\, \text{Re}\lambda=0\}$ and $\sigma_-=\{\lambda\in\sigma;\, \text{Re}\lambda<0\}$. More precisely, the following statements are true:
\begin{itemize}
    \item[(iii)] There exists a positive constant $\gamma>0$ such that $$\sup_{\lambda\in\sigma_-}(\text{Re}\lambda)<-\gamma.$$ \item[(iv)] $\sigma_0$ consists of a finite number of eigenvalues with finite multiplicities.
\end{itemize}
Finally, there exist positive constants $ s_0>0$ and $C>0$ such that, for all $s\in\R$ with $\abs{s}\geq s_0$, we have that
\begin{itemize}
    \item[(v)] $\norm{(i s I-L_1)^{-1}}_{\mathcal{L}(L_2(S\e{1}))}\leq\frac{C}{\abs{ s}},$
\end{itemize}
where $L_1$ is the restriction of $L$ to $P_1\dot{H}^4(S\e{1})$, $P_1$ is the projection $P_1\colon L_{2}(S\e{1})\to L_{2}(S\e{1})$ defined by $P_1=\mathbb{I}-P_0$ and $P_0$ is the spectral projection corresponding to $\sigma_0$ that is given by
\begin{equation}\label{defp0}
    P_0=\frac{1}{2\pi i}\int_{\Gamma}(\lambda\mathbb{I}-L)\e{-1}d\lambda,
\end{equation}
for a simple, counterclockwise oriented Jordan curve $\Gamma$ surrounding $\sigma_0$ and lying entirely in $\{\lambda\in\C;\, \text{Re}\lambda>-\gamma\}$.
It is worthwhile to note that, thanks to \cite[Rem. 2.16]{centermanif}, the only property that we need to check for the operator $L_1$ is the one in (v). This is the case since we are working with the Hilbert spaces $L_{2}(S\e{1})$ and $\dot{H}\e{4}(S\e{1})$. The theorem is now a minor adaptation of Theorems 2.9 and 3.22 in \cite{centermanif}.
\end{proof}

%----------------
\medskip

\begin{theorem}\label{thestab}
Let $c>0$. There exist $\delta>0$ and a manifold $\mathcal{M}_0$ as in \eqref{manifold} (both of them depending on $c$) such that all the properties stated in Theorem \ref{thmmani} hold true with $\mathcal{O}=B_{\delta}(0)$. In particular, if $v(0,\cdot)\in\mathcal{M}_0\cap\mathcal{O}$, then the corresponding solution $h$ of \eqref{eq:CaseI_1} (cf. \eqref{defvfuncionh}) satisfies:
\begin{equation}\label{cotasolucmanifold}
    \left\|h(\cdot+c\psi(\tilde{\beta})t,t)-c-\tfrac{2K}{\sqrt{t}}\cos(\cdot+\tilde{K}\log{t}+C_0)\right\|_{H\e{4}(S\e{1})}\leq\frac{C}{t}\quad\textit{for all}\quad t\geq 1,
\end{equation}
where 
\begin{equation}\label{defk}
    K=\sqrt{\frac{2c\e{3}\psi^\prime(\tilde{\beta})}{\psi(\tilde{\beta})}}, \quad \tilde{K}= \frac{c\e{2}\psi^\prime(\tilde{\beta})}{2\psi(\tilde{\beta})},\quad \quad C_0=C_0(h_0),
\end{equation}
and $C$ depends only on $c$. Moreover, if $v(\cdot,0)\in\mathcal{O}$, we have that 
\begin{equation}\label{distancia}
 \text{dist}_{\dhil{4}}(v(\cdot,t),\mathcal{M}_0)\leq Ce\e{-at},
 \quad
 t>0, 
\end{equation}
with $a=a(c)$.
\end{theorem}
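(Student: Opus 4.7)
The approach is the two-step centre-manifold strategy already used for the Newtonian case in \cite[Thm.~5.1]{PV1}, with the nonlinearity replaced by the operator $R$ from \eqref{defr}. First, I would apply Theorem~\ref{thmmani} directly on a ball $B_{\delta}(0) \subset \dot H^{4}(S^{1})$ for some $\delta = \delta(c)$ sufficiently small. The Fourier multiplier of $L$ at mode $n$ is $\tfrac{c^{3}}{3}\psi(\tilde\beta)\,n^{2}(n^{2}-1)$, which vanishes precisely for $n = 0, \pm 1$ and is bounded below on modes $|n|\geq 2$ by $4c^{3}\psi(\tilde\beta)$; thus all spectral hypotheses are satisfied, yielding the centre manifold $\mathcal{M}_{0} = \{v_{0} + \Phi(v_{0}) : v_{0}\in\mathcal{E}_{0}\}$ and the exponential attracting statement \eqref{distancia}, with rate $a$ controlled by the spectral gap just described.

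The core of the argument is then the analysis of the reduced ODE on $\mathcal{E}_{0}$. Writing $v_{0} = \overline{a_{1}}e^{-i\phi} + a_{1} e^{i\phi}$ and using $L_{0} = 0$, equation \eqref{v0} becomes $\dot{v}_{0} = P_{0}R(v_{0} + \Phi(v_{0}))$. Two cancellations push the leading-order right-hand side to cubic order in $a_{1}$: the identity $\partial_{\phi} v_{0} + \partial_{\phi}^{3} v_{0} \equiv 0$ on $\mathcal{E}_{0}$ kills the second and third terms of \eqref{defr} when evaluated at $v_{0}$ alone, while $\partial_{\phi}(v_{0}^{2})$ is supported on Fourier modes $\pm 2$ and is annihilated by $P_{0}$. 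Substituting the quadratic approximation $\Phi(v_{0}) = Q(v_{0}) + O(\|v_{0}\|^{3})$ with $Q$ from \eqref{defq}, and extracting the $e^{i\phi}$ coefficient of $R(v_{0} + Q(v_{0}))$, I expect to obtain an ODE of the form
\begin{equation*}
    \frac{d a_{1}}{dt} = (-K_{1} + i K_{2})\,|a_{1}|^{2} a_{1} + O(|a_{1}|^{4}),
\end{equation*}
with $K_{1} = 1/(2K^{2}) > 0$ and $K_{2} = 2 K_{1}\tilde K$, where $K$ and $\tilde K$ are the constants in \eqref{defk}. Polar coordinates $a_{1} = \rho e^{i\vartheta}$ then decouple the equation at leading order into $\dot\rho = -K_{1}\rho^{3} + O(\rho^{4})$ and $\dot\vartheta = K_{2}\rho^{2} + O(\rho^{3})$, whose elementary integration gives $\rho(t) = (2K_{1}t)^{-1/2}(1 + O(t^{-1/2}))$ and $\vartheta(t) = \tilde K \log t + C_{0} + O(t^{-1/2})$. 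Undoing the rotating change of variables \eqref{defvfuncionh}, using $\|\Phi(v_{0}(t))\|_{\dot H^{4}(S^{1})} = O(\rho(t)^{2}) = O(1/t)$ (from $\Phi(0) = 0$, $D\Phi(0) = 0$), and expanding the cosine in the phase error $O(t^{-1/2})$ yields \eqref{cotasolucmanifold}.

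The main technical obstacle is the cubic bookkeeping in the second step: the three terms of \eqref{defr} involve $\psi(\tilde\beta), \psi'(\tilde\beta)$ and $\psi''(\tilde\beta)$ and, after substitution of $Q(v_{0})$ and combination, must produce the specific real and imaginary cubic coefficients $K_{1}, K_{2}$ dictated by \eqref{defk}. In the Newtonian limit $\psi(s) = s$ the last term of \eqref{defr} vanishes and one recovers the computation of \cite{PV1}; for general $\psi$ one must verify that the ratios of $\psi(\tilde\beta), \psi'(\tilde\beta), \psi''(\tilde\beta)$ combine in such a way that the dissipative sign $K_{1} > 0$ is preserved and the prefactors match \eqref{defk}. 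Once the reduced ODE is established in this form, the asymptotic integration and the conversion back to the original variable $h$ are routine, and together with property~(iii) of Theorem~\ref{thmmani} they produce both \eqref{cotasolucmanifold} and \eqref{distancia}.
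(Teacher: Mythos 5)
Your proposal follows essentially the same route as the paper: verify the spectral hypotheses for $L$, invoke the centre-manifold construction of Theorem \ref{thmmani}, reduce to the ODE $\frac{dv_0}{dt}=P_0R\bigl(v_0+\Phi(v_0)\bigr)$ on $\mathcal{E}_0$ using $L_0=0$ together with the quadratic approximation $\Phi(v_0)=Q(v_0)+O(\|v_0\|^3)$, integrate the resulting cubic ODE to obtain \eqref{cotasolucmanifold}, and deduce \eqref{distancia} from Theorem \ref{thmmani}(iii) combined with the global existence of Theorem \ref{thglobal}. Your description of the reduced ODE and its polar-coordinate integration is in fact more explicit than the paper's own proof, which defers the cubic coefficient bookkeeping to \cite{PV1} exactly as you anticipate.
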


%-----
\medskip

\begin{proof}
Using the same techniques as in \cite{PV1} we can prove that the operators $L$ and $R$, given by \eqref{defl} and \eqref{defr}, are well-defined from $\dot{H}\e{4}(S\e{1})\to L_{2}(S\e{1})$ and $\dot{H}\e{4}(S\e{1})\times\dot{H}\e{4}(S\e{1})\to L_{2}(S\e{1})$, respectively. 
 
 Moreover, the operator $L$ satisfies the properties (i), (iii) and (iv) of the proof Theorem \ref{thmmani} and the operator $R$ satisfies property (ii) for any $k\geq 2$. 
 Therefore, the operator $P_0$ defined in \eqref{defp0} is the orthogonal projection of $L_{2}(S\e{1})$ onto $\mathcal{E}_0=\text{span}\{\cos(\theta),\sin(\theta)\}$ and there exists a manifold $\mathcal{M}_0$ with the properties stated in Theorem \ref{thmmani} that can be parametrised as in \eqref{manifold} with $\Phi\in\mathcal{C}\e{k}(\mathcal{E}_0,\mathcal{E}_1)$ for any $k\geq 2$.
Furthermore, we have $\Phi(0)=0$, $D\Phi(0)=0$ and, if $v_0=a_{-1}e\e{-i\theta}+a_1e\e{i\theta}\in\mathcal{E}_0$, then
\begin{equation}\label{defDpsi}
    D^2\Phi(0)(v_0,v_0)
    =
    \frac{i\psi(\tilde{\beta})}{4c\e{3}\psi'(\tilde{\beta})}a_{-1}^2 e^{-2i\phi}-\frac{i\psi(\tilde{\beta})}{4c\e{3}\psi'(\tilde{\beta})}a_{1}^2 e^{2i\phi}.
\end{equation}
The differential equation  \eqref{v0} that describes the dynamics of $v$ on this manifold is  reduced to
\begin{equation}\label{evolv0}
\frac{dv_0}{dt}=P_0R\bigl(v_0+\Phi(v_0)\bigr),
\end{equation}
where we used that $L_0(v_0)=0$ for $v_0\in\mathcal{E}_0$. Using \eqref{defDpsi} and \eqref{evolv0}, we can conclude \eqref{cotasolucmanifold}.
Finally, the estimate \eqref{distancia} is a consequence of the global existence result in Theorem \ref{thglobal} and Theorem \ref{thmmani}(iii). 
For more details c.f.\cite{PV1}.
\end{proof}

\begin{remark}
The asymptotic behaviour in \eqref{cotasolucmanifold} can be reformulated in terms of the original non-dimensional variables (cf. \eqref{rescale}) as:

\begin{equation}\label{defvariablesoriginales}
	h(t,\theta)
	=
	\lambda c+\tfrac{2K\lambda}{\sqrt{\frac{\eps}{\tau}\lambda t}}\cos\biggl(\theta-c\psi(\tilde{\beta})\tfrac{\eps}{\tau}\lambda t+\tilde{K}\log{\bigl(\tfrac{\eps}{\tau}\lambda t\bigr)}+C_0\biggr)+\mathcal{O}\bigl(\tfrac{1}{t}\bigr),
\end{equation}
where $\lambda=\frac{1}{\sqrt{B}}$ and $t\gg\frac{\tau}{\eps\lambda}$. 

Moreover, we recall that the interface separating the two fluids is given by the curve $r=1+\varepsilon h$. Therefore, a geometrical argument shows that the interface associated to the solution described by \eqref{defvariablesoriginales} behaves asymptotically as the circle given by 
\begin{displaymath}
\bigl(x-\sigma(t)\cos(\theta_0(t))\bigr)\e{2}+\bigl(y-\sigma(t)\sin(\theta_0(t))\bigr)\e{2}=r_0\e{2},
\end{displaymath}
where $\sigma(t)=\frac{2K\varepsilon\lambda}{\sqrt{\frac{\varepsilon}{\tau}\lambda t}}$, $\theta_0(t)=c\psi(\tilde{\beta})\frac{\varepsilon}{\tau}\lambda t+\tilde{K}\log{(\frac{\varepsilon}{\tau} \lambda t)}+C_0$ and $r_0=1+\varepsilon\lambda c$. Note that the center of this circle spirals towards the origin as $t\to\infty$.
\end{remark}
\begin{figure}[htb]
\center
\includegraphics[width=120mm]{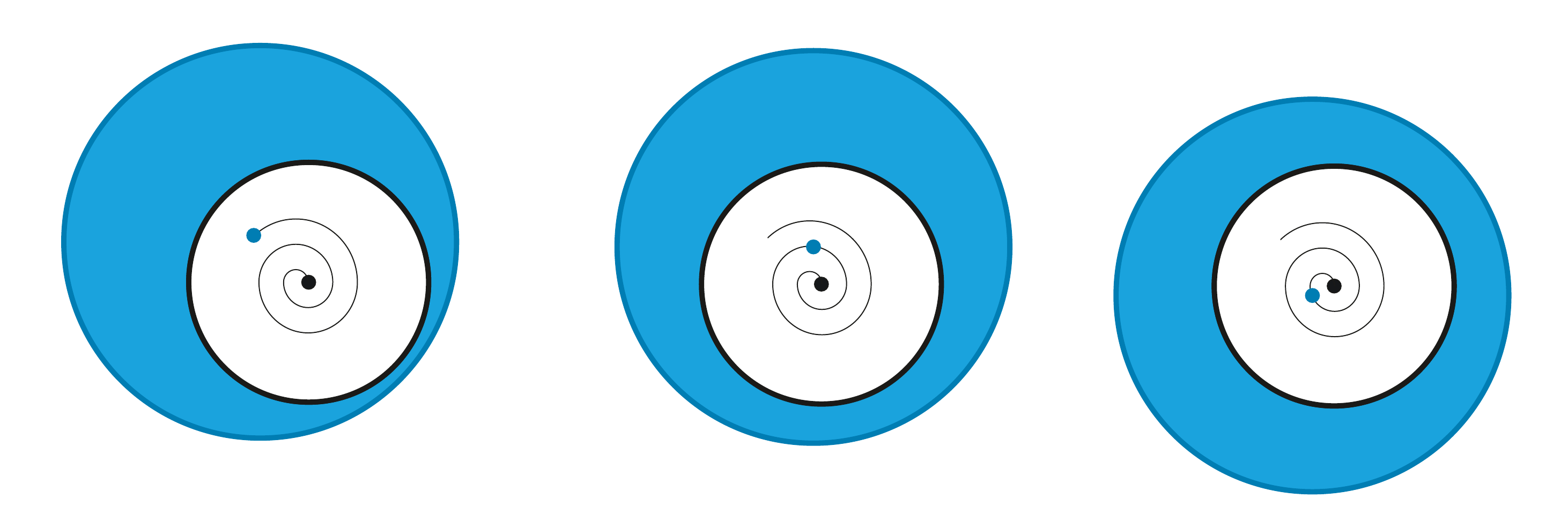}\caption{Center of the interface spiralling towards the center of the cylinders}
\end{figure}
\subsection{The cases $B\to 0$ and $B\to\infty$}
We recall that in these cases we have the evolution equation \eqref{eq:CaseI_2}. For this case we can proof exactly the analogous Theorems in Subsection \ref{subsect3_1}. For this purpose, we consider:
\begin{equation}\label{defvfuncionh2}
    v(t,\varphi)=h(t,\theta)-c\quad\textit{with}\quad\varphi=\theta-c\beta^{\frac{1}{p}}t.
\end{equation} 
Using the evolution equation \eqref{equevol_power-law}, we obtain that $v$ solves the equation
\begin{equation}\label{evoleq2}
    \frac{dv}{dt}=L(v)+R(v),
\end{equation}
where $L\colon\dot{H}\e{4}(S\e{1})\to L_{2}(S\e{1})$ is a linear operator, given by
\begin{equation}\label{defl2}
    L(v)=-\frac{c^3}{3p}\abs{\beta}\e{\frac{1}{p}-1}\bigl(\partial_\varphi^2 v+\partial_\varphi^4 v\bigr),\quad v\in\dot{H}^4(S^1),
\end{equation}
and $R\colon\dot{H}\e{4}(S\e{1})\to L_{2}(S\e{1})$ is the non-linear operator defined by
\begin{equation}\label{defr2}
    R(v)=-\beta\e{\frac{1}{p}}\dfi{}(v^2) -\frac{2}{3}c\e{2}\abs{\beta}\e{\frac{1}{p}-1}\frac{2p+1}{p\e{2}}\dfi{}\bigl(v\bigl(\partial_\varphi v+\partial_\varphi^3 v\bigr)\bigr)+h.o.t.,
    \quad v\in\dot{H}^4(S^1).
\end{equation}
Note that $L$ and $R$ are well-defined bounded operators from $\dot{H}\e{4}(S\e{1})$ to $L_{2}(S\e{1})$. Moreover, we consider the quadratic operator $Q\colon\mathcal{E}_0\times\mathcal{E}_0\to\mathcal{E}_1$ by
\begin{equation}\label{defq2}
    Q(v_0)=\frac{ip\beta}{4c^3}a_{-1}^2 e^{-2i\varphi}-\frac{ip\beta}{4c^3}a_{1}^2 e^{2i\varphi},
    \quad
    v_0=a_{-1}e\e{-i\varphi}+a_1e\e{i\varphi}\in\mathcal{E}_0,
\end{equation}
where $a_1=\overline{a_{-1}}$. Using \eqref{defvfuncionh2}-\eqref{defq2} we can prove the global existence result.

On the other hand, taking 
\begin{equation}\label{defk2}
    K=\sqrt{\frac{c\e{3}}{p\beta\e{\frac{1}{p}+1}}}, \quad \tilde{K}= \frac{2p+1}{p\e{2}}\frac{c\e{2}}{\beta},\quad \quad C_0=C_0(h_0),
\end{equation}
we can prove that \begin{equation}\label{cotasolucmanifold2}
    \left\|h(t,\cdot+ct)-c-\tfrac{2K}{\sqrt{t}}\cos(\cdot+\tilde{K}\log{t}+C_0)\right\|_{H\e{4}(S\e{1})}\leq\frac{C}{t},
    \quad t\geq 1.
\end{equation}
Therefore, we can conclude that in the these cases, we have that the center of the circle spirals towards the origin as in Subsection \ref{subsect3_1}.
%-------------------------------------------------------------
%-------------------------------------------------------------
%-------------------------------------------------------------

\bigskip

\noindent\textsc{Acknowledgement. } The authors have been supported by the Deutsche Forschungsgemeinschaft (DFG, German Research Foundation) through the collaborative research centre 'The mathematics of emerging effects' (CRC 1060, Project-ID  211504053) and the Hausdorff Center for Mathematics (GZ 2047/1, Project-ID 390685813). 
\newpage
\bibliographystyle{plain}
\bibliography{referencias}
\end{document}